\title[]{Bisymmetric And Quasitrivial Operations: Characterizations and Enumerations}
\author{Jimmy Devillet}
\address{Mathematics Research Unit, University of Luxembourg, Maison du Nombre, 6, avenue de la Fonte, L-4364 Esch-sur-Alzette, Luxembourg}
\email{jimmy.devillet[at]uni.lu}
\date{\today}
\theoremstyle{plain}
\newtheorem{theorem}{Theorem}[section] 
\newtheorem{lemma}[theorem]{Lemma}
\newtheorem{proposition}[theorem]{Proposition}
\newtheorem{corollary}[theorem]{Corollary}
\newtheorem{fact}[theorem]{Fact}
\theoremstyle{definition}
\newtheorem{definition}[theorem]{Definition}
\newtheorem{example}[theorem]{Example}
\theoremstyle{remark}
\newtheorem{remark}{Remark}
\newcommand{\cleq}{\mathrm{conv}_{\leq}}
\begin{document}
\begin{abstract}
We investigate the class of bisymmetric and quasitrivial binary operations on a given set $X$ and provide various characterizations of this class as well as the subclass of bisymmetric, quasitrivial, and order-preserving binary operations. We also determine explicitly the sizes of these classes when the set $X$ is finite.
\end{abstract}

\keywords{Bisymmetry, mediality, quasitrivial operation, quasitrivial and bisymmetric operation, enumeration of quasitrivial and bisymmetric operations.}

\subjclass[2010]{Primary  06A05, 20M10, 20M14; Secondary 05A15, 39B72.}

\maketitle

\section{Introduction}

Let $X$ be an arbitrary nonempty set. We use the symbol $X_n$ if $X$ is finite of size $n\geq 1$, in which case we assume w.l.o.g.\ that $X_n=\{1,\ldots,n\}$.

Recall that a binary operation $F\colon X^2 \to X$ is \emph{bisymmetric} if it satisfies the functional equation
$$
F(F(x,y),F(u,v)) = F(F(x,u),F(y,v))
$$
for all $x,y,u,v \in X$. The bisymmetry property for binary real operations was studied by Acz\'{e}l \cite{Acz1,Acz2}. Since then, it has been investigated in the theory of functional equations, especially in characterizations of mean functions (see, e.g., \cite{Acz3,Acz4,FoMa97,GMRE09}). This property has also been extensively investigated in algebra where it is called \emph{mediality}. For instance, a groupoid $(X,F)$ where $F$ is a bisymmetric binary operation on $X$ is called a \emph{medial groupoid} (see, e.g., \cite{Jez78,Jez83,Jez83(1),Jez94,Kep81}).

In this paper, which is a continuation of \cite{CouDevMar2}, we investigate the class of binary operations $F\colon X^2\to X$ that are bisymmetric and quasitrivial, where quasitriviality means that $F$ always outputs one of its input values. It is known that any bisymmetric and quasitrivial operation is associative (see Kepka~\cite[Corollary 10.3]{Kep81}). This observation is of interest since it shows that the class of bisymmetric and quasitrivial operations is a subclass of the class of associative and quasitrivial operations. The latter class was characterized independently by several authors (see, e.g., Kepka~\cite[Corollary~1.6]{Kep81} and L\"anger~\cite[Theorem~1]{Lan80}) and a recent and elementary proof of this characterization is available in \cite[Theorem 2.1]{CouDevMar2}. We also investigate certain subclasses of bisymmetric and quasitrivial operations by adding properties such as order-preservation and existence of neutral and/or annihilator elements. In the finite case (i.e., $X=X_n$ for any integer $n\geq 1$), we enumerate these subclasses as well as the class of bisymmetric and quasitrivial operations.

The outline of this paper is as follows. After presenting in Section 2 some definitions and preliminary results (including the above-mentioned characterization of the class of associative and quasitrivial operations, see Theorem~\ref{thm:Kim}), we introduce in Section 3 the concept of ``quasilinear weak ordering'' and show how it can be used to characterize the class of bisymmetric and quasitrivial operations $F\colon X^2 \to X$ (see Theorem \ref{thm:B}). We also provide an alternative characterization of the latter class when $X=X_n$ for some integer $n \geq 1$ (see Theorem \ref{thm:B}). In particular, these characterizations provide an answer to open questions posed in \cite[Section 5, Question (b)]{CouDevMar} and \cite[Section 6]{CouDevMar2}. We then recall the weak single-peakedness property (see Definition~\ref{def:WBlack}) as a generalization of single-peakedness to arbitrary weakly ordered sets and we use it to characterize the class of bisymmetric, quasitrivial, and order-preserving operations (see Theorem \ref{thm:BND}). In Section 4, we restrict ourselves to the case where $X=X_n$ for some integer $n\geq 1$ and enumerate the class of bisymmetric and quasitrivial operations as well as some subclasses discussed in this paper. By doing so, we point out some known integer sequences and introduce new ones. In particular, the search for the number of bisymmetric and quasitrivial binary operations on $X_n$ for any integer $n\geq 1$ (see Proposition \ref{prop:qms}) gives rise to a sequence that was previously unknown in the Sloane's On-Line Encyclopedia of Integer Sequences (OEIS, see \cite{Slo}). All the sequences that we consider are given in explicit forms and through their generating functions or exponential generating functions (see Propositions \ref{prop:uw1}, \ref{prop:qms}, \ref{prop:uw2}, and \ref{prop:count}). Finally, in Section 5 we further investigate the quasilinearity property of the weak orderings on $X$ that are weakly single-peaked w.r.t.\ a fixed linear ordering on $X$ and provide a graphical characterization of these weak orderings (see Theorem \ref{thm:graph2}).

\section{Preliminaries}

In this section we recall and introduce some basic definitions and provide some preliminary results.

Recall that a binary relation $R$ on $X$ is said to be
\begin{itemize}
\item \emph{total} if $\forall x,y$: $xRy$ or $yRx$,
\item \emph{transitive} if $\forall x,y,z$: $xRy$ and $yRz$ implies $xRz$,
\item \emph{antisymmetric} if $\forall x,y$: $xRy$ and $yRx$ implies $x=y$.
\end{itemize}

A binary relation $\leq$ on $X$ is said to be a \emph{linear ordering on $X$} if it is total, transitive, and antisymmetric. In that case the pair $(X,\leq)$ is called a \emph{linearly ordered set}. For any integer $n\geq 1$, we can assume w.l.o.g.\ that the pair $(X_n,\leq_n)$ represents the set $X_n=\{1,\ldots,n\}$ endowed with the linear ordering relation $\leq_n$ defined by $1<_n\cdots <_nn$.

A binary relation $\lesssim$ on $X$ is said to be a \emph{weak ordering on $X$} if it is total and transitive. In that case the pair $(X,\lesssim)$ is called a \emph{weakly ordered set}. We denote the symmetric and asymmetric parts of $\lesssim$ by $\sim$ and $<$, respectively. Recall that $\sim$ is an equivalence relation on $X$ and that $<$ induces a linear ordering on the quotient set ${X/\sim}$. For any $u\in X$ we denote the equivalence class of $u$ by $[u]_{\sim}$, i.e., $[u]_{\sim}=\{x\in X\mid x\sim u\}$.

For any linear ordering $\preceq$ and any weak ordering $\lesssim$ on $X$, we say that $\preceq$ is \emph{subordinated to} $\lesssim$ if for any $x,y \in X$, we have that $x < y$ implies $x \prec y$.

For a weak ordering $\lesssim$ on $X$, an element $u\in X$ is said to be \emph{maximal} (resp.\ \emph{minimal}) \emph{for $\lesssim$} if $x\lesssim u$ (resp.\ $u\lesssim x$) for all $x\in X$. We denote the set of maximal (resp.\ minimal) elements of $X$ for $\lesssim$ by $\max_{\lesssim}X$ (resp.\ $\min_{\lesssim}X$).

\begin{definition}\label{de:def1}
An operation $F\colon X^2\to X$ is said to be
\begin{itemize}
\item \emph{associative} if $F(F(x,y),z) = F(x,F(y,z))$ for all $x,y,z\in X$,
\item \emph{bisymmetric} if $F(F(x,y),F(u,v)) = F(F(x,u),F(y,v))$ for all $x,y,u,v \in X$,
\item \emph{idempotent} if $F(x,x)=x$ for all $x\in X$,
\item \emph{quasitrivial} (or \emph{conservative}) if $F(x,y)\in\{x,y\}$ for all $x,y\in X$,
\item \emph{commutative} if $F(x,y)=F(y,x)$ for all $x,y\in X$,
\item \emph{$\leq$-preserving} for some linear ordering $\leq$ on $X$ if for any $x,y,x',y'\in X$ such that $x\leq x'$ and $y\leq y'$ we have $F(x,y)\leq F(x',y')$.
\end{itemize}
\end{definition}

\begin{definition}
Let $F\colon X^2\to X$ be an operation.
\begin{itemize}
\item An element $e\in X$ is said to be a \emph{neutral element of $F$} if $F(e,x) = F(x,e) = x$ for all $x\in X$. In this case we can easily show by contradiction that the neutral element is unique.
\item An element $a\in X$ is said to be an \emph{annihilator of $F$} if $F(x,a) = F(a,x) = a$ for all $x\in X$. In this case we can easily show by contradiction that the annihilator is unique.
\item The points $(x,y)$ and $(u,v)$ of $X^2$ are said to be \emph{$F$-connected} if $F(x,y) = F(u,v)$. The point $(x,y)$ of $X^2$ is said to be \emph{$F$-isolated} if it is not $F$-connected to another point of $X^2$.
\end{itemize}
\end{definition}


When $X_n$ is endowed with $\leq_n$, the operations $F\colon X_n^2\to X_n$ can be easily visualized by showing their contour plots, where we connect by edges or paths all the points of $X_n^2$ having the same values by $F$. For instance, the operation $F\colon X_3^2\to X_3$ defined by $F(2,2)=F(2,3)=2$, $F(1,x)=1$, and $F(3,x)=F(2,1)=3$ for $x = 1,2,3$, is idempotent. Its contour plot is shown in Figure~\ref{fig:fs}.

\setlength{\unitlength}{5.0ex}
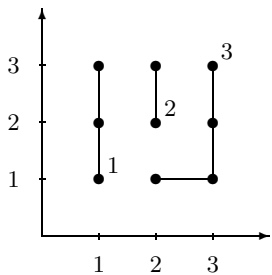
\begin{figure}[htbp]
\begin{center}
\begin{small}
\begin{picture}(5,5)
\put(0.5,0.5){\vector(1,0){4}}\put(0.5,0.5){\vector(0,1){4}}
\multiput(1.5,0.45)(1,0){3}{\line(0,1){0.1}}%
\multiput(0.45,1.5)(0,1){3}{\line(1,0){0.1}}%
\put(1.5,0){\makebox(0,0){$1$}}\put(2.5,0){\makebox(0,0){$2$}}\put(3.5,0){\makebox(0,0){$3$}}
\put(0,1.5){\makebox(0,0){$1$}}\put(0,2.5){\makebox(0,0){$2$}}\put(0,3.5){\makebox(0,0){$3$}}
\multiput(1.5,1.5)(0,1){3}{\multiput(0,0)(1,0){3}{\circle*{0.2}}}
\drawline[1](1.5,1.5)(1.5,3.5)\drawline[1](2.5,2.5)(2.5,3.5)\drawline[1](3.5,1.5)(2.5,1.5)\drawline[1](3.5,1.5)(3.5,3.5)
\put(1.75,1.75){\makebox(0,0){$1$}}\put(2.75,2.75){\makebox(0,0){$2$}}\put(3.75,3.75){\makebox(0,0){$3$}}
\end{picture}
\end{small}
\caption{An idempotent operation on $X_3$ (contour plot)}
\label{fig:fs}
\end{center}
\end{figure}

\begin{definition}
Let $\leq$ be a linear ordering on $X$. We say that an operation $F \colon X^2 \to X$ has
\begin{itemize}
\item a \emph{$\leq$-disconnected level set} if there exist $x,y,u,v,s,t \in X$, with $(x,y) < (u,v) < (s,t)$, such that $F(x,y) = F(s,t) \neq F(u,v)$.
\item a \emph{horizontal (resp.\ vertical) $\leq$-disconnected level set} if there exist $x,y,z,u \in X$, with $x < y < z$, such that $F(x,u) = F(z,u) \neq F(y,u)$ (resp.\ $F(u,x) = F(u,z) \neq F(u,y)$).
\end{itemize}
\end{definition}

\begin{fact}\label{fact:dis}
Let $\leq$ be a linear ordering on $X$. If $F \colon X^2 \to X$ has a horizontal or vertical $\leq$-disconnected level set, then it has a $\leq$-disconnected level set.
\end{fact}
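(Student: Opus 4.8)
The plan is to treat this as a direct verification, splitting along the disjunction in the hypothesis. Throughout I would read the relation ``$(x,y)<(u,v)$'' appearing in the definition of a $\leq$-disconnected level set as the strict lexicographic ordering on $X^2$ induced by $\leq$, so that $(a,b)<(c,d)$ means either $a<c$, or else $a=c$ and $b<d$. The single observation the proof rests on is elementary: if $a<b<c$ in $(X,\leq)$ and $w\in X$, then $(a,w)<(b,w)<(c,w)$ and $(w,a)<(w,b)<(w,c)$ in this ordering. (In fact this holds verbatim for the strict product ordering as well, so the argument does not genuinely depend on which of the two is intended, only on the fact that three consecutive cells in a row or a column of the plot form a comparable chain.)

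Suppose first that $F$ has a horizontal $\leq$-disconnected level set, witnessed by elements $a<b<c$ and $w$ of $X$ with $F(a,w)=F(c,w)\neq F(b,w)$. By the observation above, the three points $(a,w),(b,w),(c,w)$ of $X^2$ satisfy $(a,w)<(b,w)<(c,w)$. Taking the triples $(x,y)$, $(u,v)$, $(s,t)$ in the definition of a $\leq$-disconnected level set to be $(a,w)$, $(b,w)$, $(c,w)$ respectively, we obtain points of $X^2$ with $(x,y)<(u,v)<(s,t)$ and $F(x,y)=F(s,t)\neq F(u,v)$. Hence $F$ has a $\leq$-disconnected level set.

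The vertical case is entirely symmetric: if $F$ has a vertical $\leq$-disconnected level set witnessed by $a<b<c$ and $w$ with $F(w,a)=F(w,c)\neq F(w,b)$, then $(w,a)<(w,b)<(w,c)$, and choosing these three points in the definition again exhibits a $\leq$-disconnected level set. Since by hypothesis one of the two cases holds, the fact follows in all cases.

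As for difficulty, there is essentially no obstacle here; the statement is a bookkeeping remark whose role is to let later arguments dispense with the horizontal/vertical variants and work only with the ``global'' notion of a disconnected level set. The only point deserving a moment's attention is to confirm that in the vertical case the comparison of $(w,a),(w,b),(w,c)$ still yields a chain even though their first coordinates coincide — which it does, since the ordering then reduces to comparing second coordinates.
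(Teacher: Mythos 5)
Your proof is correct and is exactly the immediate verification the paper has in mind: the paper states this as a \emph{Fact} with no written proof precisely because three collinear cells $(a,w)<(b,w)<(c,w)$ (or $(w,a)<(w,b)<(w,c)$) already witness the general definition. Your observation that the argument is insensitive to whether $(x,y)<(u,v)$ is read lexicographically or as the strict product order is a sensible precaution, and nothing further is needed.
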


\begin{remark}
We observe that, for any linear ordering $\leq$ on $X$, an operation $F \colon X^2 \to X$ having a $\leq$-disconnected level set need not have a horizontal or vertical $\leq$-disconnected level set. Indeed, the operation $F\colon X_3^2\to X_3$ whose contour plot is depicted in Figure \ref{fig:jump} has a $\leq_3$-disconnected level set since $F(1,1) = F(2,3) = 1 \neq 2 = F(2,2)$ but it has no horizontal or vertical $\leq_3$-disconnected level set.
\end{remark}

\setlength{\unitlength}{5.0ex}
\begin{figure}[htbp]
\begin{center}
\begin{small}
\begin{picture}(3,3)
\multiput(0.5,0.5)(0,1){3}{\multiput(0,0)(1,0){3}{\circle*{0.2}}}
\drawline[1](0.5,2.5)(0.5,1.5)\drawline[1](0.5,1.5)(1.5,1.5)\drawline[1](1.5,1.5)(1.5,0.5)\drawline[1](1.5,0.5)(2.5,0.5)\drawline[1](0.5,0.5)(1.5,2.5)\drawline[1](2.5,1.5)(2.5,2.5)
\put(0.75,0.5){\makebox(0,0){$1$}}\put(1.75,1.75){\makebox(0,0){$2$}}\put(2.75,2.75){\makebox(0,0){$3$}}
\end{picture}
\end{small}
\caption{An idempotent operation on $X_3$}
\label{fig:jump}
\end{center}
\end{figure}
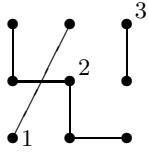

\begin{lemma}\label{lem:dis}
Let $\leq$ be a linear ordering on $X$. If $F \colon X^2 \to X$ is quasitrivial, then it has a $\leq$-disconnected level set iff it has a horizontal or vertical $\leq$-disconnected level set.
\end{lemma}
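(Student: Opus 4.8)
The forward implication uses no quasitriviality: it is exactly Fact~\ref{fact:dis}. For the converse, suppose $F$ is quasitrivial and has a $\leq$-disconnected level set, witnessed by points $(x_1,y_1)<(x_2,y_2)<(x_3,y_3)$ in $X^2$ (the order on $X^2$ being the componentwise one, so $x_1\leq x_2\leq x_3$, $y_1\leq y_2\leq y_3$, and the three points are pairwise distinct) with $F(x_1,y_1)=F(x_3,y_3)=v\neq v'=F(x_2,y_2)$. Quasitriviality gives $v\in\{x_1,y_1\}\cap\{x_3,y_3\}$ and $v'\in\{x_2,y_2\}$, and $F$ is idempotent. One more observation will help: replacing $\leq$ by its reverse produces a configuration of the same type with $(x_1,y_1)$ and $(x_3,y_3)$ interchanged, while preserving both ``$F$ has a horizontal $\leq$-disconnected level set'' and ``$F$ has a vertical $\leq$-disconnected level set''.

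The plan is to distinguish cases according to which coordinates of $(x_1,y_1)$ and of $(x_3,y_3)$ are equal to $v$. If $v=x_1=x_3$, then $x_1=x_2=x_3$, the three points lie on the column $x=x_1$, and $y_1<y_2<y_3$ by distinctness, which is already a vertical $\leq$-disconnected level set; dually, $v=y_1=y_3$ forces $y_1=y_2=y_3$ and gives a horizontal one on the row $y=v$. Up to the reversal symmetry above, the only remaining case is $v=x_1=y_3$ with $x_1\neq x_3$; then $x_1<x_3$, and from $y_1\leq y_2\leq y_3=x_1$ we also get $y_1,y_2\leq x_1$. This case is the crux.

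In it we have $F(x_1,y_1)=x_1$, $F(x_3,x_1)=x_1$, $F(x_2,y_2)=v'\neq x_1$, together with $x_1\leq x_2\leq x_3$ and $y_1,y_2\leq x_1$, and I would finish by a short chase on a few auxiliary values, its first step being typical: $F(x_2,x_1)\in\{x_1,x_2\}$; if $F(x_2,x_1)=x_2$ then $x_2\neq x_1$, and comparing $F(x_2,x_1)=x_2$ with $F(x_1,x_1)=F(x_3,x_1)=x_1$ on the row $y=x_1$ yields a horizontal $\leq$-disconnected level set unless $x_2=x_3$, in which case $x_3=x_1$, absurd. So one may assume $F(x_2,x_1)=x_1$, whence $y_2\neq x_1$ and thus $y_2<x_1$; then splitting on $v'\in\{x_2,y_2\}$, and in the case $v'=y_2$ once more on $F(x_1,y_2)\in\{x_1,y_2\}$, each resulting subcase either exhibits a horizontal or vertical $\leq$-disconnected level set (on the column $x=x_2$, on the column $x=x_1$, or on the row $y=y_2$, using idempotence to fill in the relevant ``corner'' values) or collapses to the forbidden equality $v=v'$. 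The one delicate point is precisely this bookkeeping in the crux case: it goes through only because the witnessing points are ordered componentwise, which at the same time constrains the auxiliary values through quasitriviality and idempotence and turns every degenerate subcase (two of the three relevant points coinciding) into the excluded equality $v=v'$. Everything else is immediate.
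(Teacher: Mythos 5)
Your argument is correct, and its skeleton matches the paper's: the easy direction is Fact~\ref{fact:dis} in both, and in the substantive direction both proofs use quasitriviality to locate the common value $v$ among the coordinates of the two outer points, dispose at once of the two ``aligned'' configurations ($v=x_1=x_3$ or $v=y_1=y_3$, where all three points collapse onto one column or row), and reduce the two mixed configurations to one by a symmetry (you reverse $\leq$; the paper swaps the two arguments of $F$, which exchanges horizontal and vertical). The real difference is how the mixed case $v=x_1=y_3$, $x_1<x_3$, $y_1<x_1$ is finished. The paper introduces a single auxiliary corner point, $(x_3,y_1)$: one has $(x_1,y_1)<(x_3,y_1)<(x_3,x_1)$, and by quasitriviality $F(x_3,y_1)\in\{x_3,y_1\}=\{F(x_3,x_3),F(y_1,y_1)\}$, both options being distinct from $x_1=F(x_1,y_1)=F(x_3,x_1)$; whichever value occurs, $(x_3,y_1)$ is $F$-connected to a diagonal point lying on its own column or row, so a vertical or horizontal disconnected level set appears in one stroke --- and the middle witness $(x_2,y_2)$ is never used. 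Your chase instead retains the middle point and branches on $F(x_2,x_1)$, then on $v'\in\{x_2,y_2\}$, then on $F(x_1,y_2)$; I verified that each terminal subcase yields the row or column you name (row $y=x_1$, column $x=x_2$, row $y=y_2$, column $x=x_1$), with every degenerate coincidence collapsing to $v=v'$ or to $y_2=x_1$, so the proof is complete. What the paper's route buys is brevity and the observation that, once quasitriviality has pinned $v=x_1=y_3$, the rectangle spanned by $(x_1,y_1)$ and $(x_3,x_1)$ already forces the conclusion on its own; what yours buys is that no clever choice of auxiliary point is needed beyond mechanically filling in the natural corners.
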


\begin{proof}
(Necessity) Suppose that $F$ has a $\leq$-disconnected level set and let us show that it has a horizontal or vertical $\leq$-disconnected level set. By assumption, there exist $x,y,u,v,s,t \in X$, with $(x,y) < (u,v) < (s,t)$, such that $F(x,y) = F(s,t) \neq F(u,v)$. Since $F$ is quasitrivial, we have $F(x,y) \in \{x,y\}$. Suppose that $F(x,y)=x$ (the other case is similar). Also, since $F$ is quasitrivial, we have $s=x$ or $t=x$. If $s=x$, then $u = x$ and thus $F$ has a vertical $\leq$-disconnected level set. Otherwise, if $t=x$ and $s \neq x$, then $y \leq x$. If $y = x$, then $v=x$ and thus $F$ has a horizontal $\leq$-disconnected level set. Otherwise, if $y < x$, then considering the point $(s,y) \in X^2$, we get $(x,y) < (s,y) < (s,x)$ and $F(x,y) = F(s,x) = x \neq F(s,y) \in \{s,y\} = \{F(s,s),F(y,y)\}$, which shows that $F$ has either a horizontal or a vertical $\leq$-disconnected level set.

(Sufficiency) This follows from Fact \ref{fact:dis}.
\end{proof}

We define the \emph{strict convex hull of $x,y \in X$ w.r.t.\ $\leq$} by $\mathrm{conv}_{\leq}(x,y) = \{z \in X \mid x < z < y\}$, if $x<y$, and $\mathrm{conv}_{\leq}(x,y) = \{z \in X \mid y < z < x\}$, if $y<x$.

Recall that for any linear ordering $\leq$ on $X$, a subset $C$ of $X$ is said to be \emph{convex w.r.t.\ $\leq$} if for any $x,y,z\in X$ such that $y\in\cleq(x,z)$, we have that $x,z\in C$ implies $y\in C$.

\begin{remark}
\begin{enumerate}
\item[(a)]For any quasitrivial operation $F\colon X^2 \to X$ and any $x \in X$, consider the sets
$$
L_x^h(F) = \{y \in X \mid F(y,x) = x\} \quad\text{and}\quad L_x^v(F) = \{y \in X \mid F(x,y) = x\}.
$$
Clearly, for any linear ordering $\leq$ on $X$, a quasitrivial operation $F\colon X^2 \to X$ has no $\leq$-disconnected level set iff for any $x \in X$, the sets $L_x^h(F)$ and $L_x^v(F)$ are convex w.r.t.\ $\leq$.
\item[(b)] Recall that the \emph{kernel of an operation $F\colon X^2 \to X$} is the set
$$
\ker(F) = \{((a,b),(c,d)) \in X^2 \times X^2 \mid F(a,b) = F(c,d)\}.
$$
Clearly, $\ker(F)$ is an equivalence relation on $X^2$. It is not difficult to see that for any linear ordering $\leq$ on $X$, a quasitrivial operation $F\colon X^2 \to X$ has no $\leq$-disconnected level set iff for any $x \in X$, the class of $(x,x)$ w.r.t. $\ker(F)$ is convex w.r.t. $\leq$.
\end{enumerate}
\end{remark}

\begin{fact}\label{lem:jp}
Let $\leq$ be a linear ordering on $X$. If $F\colon X^2 \to X$ is $\leq$-preserving then it has no $\leq$-disconnected level set.
\end{fact}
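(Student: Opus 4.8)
The plan is to prove the contrapositive: assuming that $F$ has a $\leq$-disconnected level set, I will exhibit a failure of $\leq$-preservation. So suppose there are $x,y,u,v,s,t\in X$ with $(x,y) < (u,v) < (s,t)$ and $F(x,y) = F(s,t) \neq F(u,v)$. The first thing to make explicit is that the relation $<$ appearing on $X^2$ in the definition of a $\leq$-disconnected level set is the asymmetric part of the coordinatewise (product) ordering induced by $\leq$; consequently $(x,y) < (u,v)$ yields $x\leq u$ and $y\leq v$, and $(u,v) < (s,t)$ yields $u\leq s$ and $v\leq t$. In particular both coordinates are nondecreasing along the chain, i.e.\ $x\leq u\leq s$ and $y\leq v\leq t$.

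Next I apply $\leq$-preservation twice along this chain. From $x\leq u$ and $y\leq v$ we obtain $F(x,y)\leq F(u,v)$; from $u\leq s$ and $v\leq t$ we obtain $F(u,v)\leq F(s,t)$. Combining these, $F(x,y)\leq F(u,v)\leq F(s,t)$. But by assumption $F(x,y) = F(s,t)$, so the two extremes of this chain coincide, and since $\leq$ is antisymmetric all three values are equal; in particular $F(x,y) = F(u,v)$, contradicting $F(x,y)\neq F(u,v)$. This establishes the contrapositive, hence the Fact.

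I do not expect any genuine obstacle here: the argument is just a two-step monotonicity ``sandwich'' closed off by antisymmetry of the linear ordering $\leq$. The only subtlety worth stating carefully is the interpretation of $(x,y) < (u,v) < (s,t)$ as a coordinatewise-increasing chain, so that $\leq$-preservation can legitimately be invoked at each of the two steps; once that is in place the conclusion is immediate. Equivalently, one may phrase the whole thing as the observation that every level set of a $\leq$-preserving operation $F$ is convex with respect to the product ordering on $X^2$, a $\leq$-disconnected level set being precisely a three-point witness to the failure of such convexity.
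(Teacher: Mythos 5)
Your argument is correct and is exactly the immediate monotonicity argument the paper has in mind: it states this as a \emph{Fact} and omits the proof entirely, the point being that $(x,y)<(u,v)<(s,t)$ in the componentwise order lets one apply $\leq$-preservation twice to get $F(x,y)\leq F(u,v)\leq F(s,t)=F(x,y)$, whence antisymmetry forces $F(u,v)=F(x,y)$. Your reading of $<$ on $X^2$ as the strict part of the product ordering is the intended one (it is the reading under which Fact~\ref{fact:dis} and the proof of Lemma~\ref{lem:dis} are consistent), so there is nothing to add.
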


\begin{proposition}\label{prop:jump}
Let $\leq$ be a linear ordering on $X$. If $F\colon X^2 \to X$ is quasitrivial, then it is $\leq$-preserving iff it has no $\leq$-disconnected level set.
\end{proposition}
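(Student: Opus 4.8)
The plan is to prove the two implications separately. The forward direction, that a $\leq$-preserving operation has no $\leq$-disconnected level set, is exactly Fact~\ref{lem:jp}, so nothing remains to be done there. The whole content lies in the converse: assuming that $F$ is quasitrivial and has no $\leq$-disconnected level set, I want to conclude that $F$ is $\leq$-preserving.

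First I would reduce global monotonicity to separate monotonicity in each argument. If $x\leq x'$ implies $F(x,y)\leq F(x',y)$ for every $y\in X$, and $y\leq y'$ implies $F(x,y)\leq F(x,y')$ for every $x\in X$, then for arbitrary $x\leq x'$ and $y\leq y'$ we get $F(x,y)\leq F(x',y)\leq F(x',y')$, which is precisely $\leq$-preservation. So it suffices to establish monotonicity in the first argument, the second being entirely symmetric (using vertical rather than horizontal disconnected level sets).

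To prove monotonicity in the first argument, I would first invoke Lemma~\ref{lem:dis} to replace the hypothesis ``$F$ has no $\leq$-disconnected level set'' by the equivalent statement ``$F$ has no horizontal or vertical $\leq$-disconnected level set'', and record that quasitriviality forces idempotence, i.e.\ $F(x,x)=x$ for all $x$. Now fix $u\in X$ and suppose, towards a contradiction, that there are $x<x'$ with $F(x,u)>F(x',u)$. Using $F(x,u)\in\{x,u\}$ and $F(x',u)\in\{x',u\}$, a short case analysis pins down two possibilities: either $F(x,u)=x$, which forces $F(x',u)=u$ and $u<x<x'$; or $F(x,u)=u$, which forces $F(x',u)=x'$ and $x<x'<u$. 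In the first case the triple $u<x<x'$ together with the column $u$ yields $F(u,u)=F(x',u)=u\neq x=F(x,u)$, a horizontal $\leq$-disconnected level set; in the second case the triple $x<x'<u$ with the column $u$ yields $F(x,u)=F(u,u)=u\neq x'=F(x',u)$, again a horizontal $\leq$-disconnected level set. Either way we contradict the hypothesis, so $F(x,u)\leq F(x',u)$, and monotonicity in the first argument follows.

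I do not expect a genuine obstacle: the argument is a finite case analysis and the idempotence-plus-quasitriviality observation is immediate. The only points needing care are to pass through Lemma~\ref{lem:dis} rather than trying to use a ``diagonal'' disconnected level set directly (which, as the Remark after Fact~\ref{fact:dis} warns, need not be horizontal or vertical), and to keep track of the strict inequality $x<x'$ so that the witnesses genuinely realize the three-point pattern in the definition of a horizontal $\leq$-disconnected level set. The treatment of the second argument is verbatim the same with the roles of rows and columns interchanged.
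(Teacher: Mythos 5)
Your proposal is correct and follows essentially the same route as the paper: both argue by contradiction from a violation of monotonicity in a single argument, use quasitriviality (and idempotence on the diagonal) to pin down the values at the three collinear points involved, and thereby exhibit a horizontal or vertical $\leq$-disconnected level set, contradicting the hypothesis via Fact~\ref{fact:dis}/Lemma~\ref{lem:dis}. The only differences are organizational — you make the reduction to separate monotonicity explicit and case on the value of $F(x,u)$ rather than on the relative order of the points — and these do not change the substance of the argument.
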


\begin{proof}
(Necessity) This follows from Fact \ref{lem:jp}.

(Sufficiency) Suppose that $F$ has no $\leq$-disconnected level set and let us show by contradiction that $F$ is $\leq$-preserving. Suppose for instance that there exist $x,y,z \in X$, $y < z$, such that $F(x,y) > F(x,z)$. By quasitriviality we see that $x \notin \{y,z\}$. Suppose for instance that $x<y<z$ (the other cases are similar). By quasitriviality we have $F(x,y) = y$ and $F(x,z)=x=F(x,x)$, and hence by Lemma \ref{lem:dis}, $F$ has a $\leq$-disconnected level set, a contradiction.
\end{proof}

\begin{remark}
We cannot relax quasitriviality into idempotency in Proposition \ref{prop:jump}. Indeed, the operation $F\colon X_3^2\to X_3$ whose contour plot is depicted in Figure~\ref{fig:fs} is idempotent and has no $\leq_3$-disconnected level set. However it is not $\leq_3$-preserving.
\end{remark}

Certain links between associativity and bisymmetry were investigated by several authors (see, e.g., \cite{CouDevMar,Kep81,MasMonTor03,SuLiuPed17,San05}). We gather them in the following lemma.

\begin{lemma}[{see, \cite[Lemma 22]{CouDevMar}}]\label{lemma:bis}
\begin{enumerate}
\item[(i)] If $F\colon X^2 \to X$ is bisymmetric and has a neutral element, then it is associative and commutative.
\item[(ii)] If $F\colon X^2 \to X$ is associative and commutative, then it is bisymmetric.
\item[(iii)] If $F\colon X^2 \to X$ is quasitrivial and bisymmetric, then it is associative.
\end{enumerate}
\end{lemma}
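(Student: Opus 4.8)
The plan is to treat the three assertions separately, since (i) and (ii) reduce to straightforward manipulations of the defining equations while (iii) carries the real content.

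For (i), I would write $e$ for the neutral element and substitute $(x,y,u,v)=(e,x,y,e)$ into the bisymmetry equation; after simplifying the inner occurrences of $e$ this collapses to $F(x,y)=F(y,x)$, so $F$ is commutative. Substituting instead $u=e$ (with $x,y,v$ arbitrary) collapses bisymmetry to $F(F(x,y),v)=F(x,F(y,v))$, which is associativity after renaming $v$. For (ii), assuming $F$ associative and commutative, I would use associativity to rewrite $F(F(x,y),F(u,v))=F(F(x,F(y,u)),v)$ and $F(F(x,u),F(y,v))=F(F(x,F(u,y)),v)$, and then conclude using commutativity ($F(y,u)=F(u,y)$).

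The substantial part is (iii), where $F$ is quasitrivial -- hence idempotent -- and bisymmetric. First I would substitute $(x,y,u,v)=(x,x,y,z)$ into bisymmetry and use idempotency to obtain the identity
$$
F(x,F(y,z)) \;=\; F(F(x,y),F(x,z)),
$$
so that associativity becomes the assertion $F(F(x,y),z)=F(F(x,y),F(x,z))$ for all $x,y,z$. I would then reduce to $x,y,z$ pairwise distinct -- when two of them coincide the claim follows at once from idempotency and quasitriviality -- and run a short case analysis on the quasitrivial values $p=F(x,y)\in\{x,y\}$ and $q=F(x,z)\in\{x,z\}$, the goal being $F(p,z)=F(p,q)$. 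The subcases $p=x$ (where $F(p,z)=q$ and $F(p,q)=F(x,q)=q$) and $p=y$ with $q=z$ (where both sides equal $F(y,z)$) are immediate. In the one remaining subcase, $p=y$ and $q=x$, i.e.\ $F(x,y)=y$ and $F(x,z)=x$, the displayed identity specialises to $F(y,x)=F(x,F(y,z))$: if $F(y,z)=y$ then $F(y,x)=F(x,y)=y=F(y,z)$ and we are done, whereas if $F(y,z)=z$ then $F(y,x)=F(x,z)=x$, which forces $F(p,z)=z$ and $F(p,q)=x$ and hence contradicts $x\neq z$, so that configuration cannot occur. Alternatively, (iii) is exactly Kepka~\cite[Corollary~10.3]{Kep81}, mentioned in the introduction.

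I expect the last subcase of (iii) to be the only genuine obstacle: $(p,q)=(y,x)$ corresponds precisely to the ``$3$-cycle'' pattern $F(x,y)=y$, $F(y,z)=z$, $F(z,x)=x$ -- the standard quasitrivial non-associative operation -- and the point of bisymmetry there is exactly to exclude it. Everything else is routine bookkeeping with the quasitriviality constraints.
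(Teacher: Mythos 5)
The paper does not actually prove this lemma --- it imports all three parts by citation (to \cite[Lemma 22]{CouDevMar}, and for (iii) to Kepka) --- so there is no in-paper argument to compare against and your attempt must stand on its own. Parts (i) and (ii) are correct: the substitutions $(e,x,y,e)$ and $u=e$ do collapse bisymmetry to commutativity and associativity respectively, and the reassociation computation for (ii) is routine. In (iii), the derivation of the identity $F(x,F(y,z))=F(F(x,y),F(x,z))$ from bisymmetry and idempotency, the reduction to pairwise distinct $x,y,z$, and the subcases $p=x$ and $(p,q)=(y,z)$ are all fine.

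The gap is in the final sub-subcase of (iii), namely $F(x,y)=y$, $F(x,z)=x$, $F(y,z)=z$. There you correctly derive $F(y,x)=F(x,F(y,z))=F(x,z)=x$, so that $F(p,z)=z$ and $F(p,q)=x$; but the step ``hence contradicts $x\neq z$, so that configuration cannot occur'' is circular: the inequality $F(p,z)\neq F(p,q)$ contradicts nothing unless you already know $F(p,z)=F(p,q)$, and that equality is exactly the associativity instance you are trying to establish. What you have actually shown at that point is only that \emph{if} the configuration occurs \emph{then} associativity fails; you still owe an argument that bisymmetry itself forbids the configuration. It does, and the fix is one further instance of the functional equation: taking $(x,y,u,v)=(x,z,y,z)$ in the definition of bisymmetry gives $F(F(x,z),F(y,z))=F(F(x,y),F(z,z))$, whose left-hand side evaluates to $F(x,z)=x$ and whose right-hand side to $F(y,z)=z$, forcing $x=z$ and contradicting pairwise distinctness. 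With that one line inserted, your proof of (iii) is complete.
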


\begin{corollary}\label{cor:equiv}
\begin{enumerate}
\item[(i)] If $F\colon X^2 \to X$ is commutative and quasitrivial, then it is  associative iff it is bisymmetric.
\item[(ii)] If $F\colon X^2 \to X$ has a neutral element, then it is associative and commutative iff it is bisymmetric.
\end{enumerate}
\end{corollary}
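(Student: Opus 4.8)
The plan is to derive both equivalences as immediate consequences of the three implications collected in Lemma~\ref{lemma:bis}, so essentially no new work is needed beyond bookkeeping. For part~(i), I would fix a commutative and quasitrivial operation $F$ and argue both directions separately. If $F$ is moreover associative, then $F$ is associative and commutative, so Lemma~\ref{lemma:bis}(ii) yields that $F$ is bisymmetric. Conversely, if $F$ is bisymmetric, then since $F$ is quasitrivial and bisymmetric, Lemma~\ref{lemma:bis}(iii) gives that $F$ is associative. This settles~(i).

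For part~(ii), I would fix an operation $F$ having a neutral element. If $F$ is associative and commutative, then Lemma~\ref{lemma:bis}(ii) immediately gives that $F$ is bisymmetric (the neutral element is not even needed in this direction). Conversely, if $F$ is bisymmetric, then since $F$ has a neutral element, Lemma~\ref{lemma:bis}(i) yields that $F$ is associative and commutative. This settles~(ii).

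There is no real obstacle here: the only thing to be careful about is matching the hypotheses of each implication in Lemma~\ref{lemma:bis} with the standing assumptions of the corollary, and noticing that in each of the four one-directional arguments exactly one of the three parts of Lemma~\ref{lemma:bis} applies. Accordingly, I expect the written proof to consist of just a few lines.
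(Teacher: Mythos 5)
Your proposal is correct and matches the paper's intent exactly: the paper states this corollary without proof, treating it as an immediate consequence of Lemma~\ref{lemma:bis}, and your four one-directional applications of parts (i), (ii), and (iii) of that lemma are precisely the intended bookkeeping.
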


\begin{remark}\label{rem:sym}
\begin{enumerate}
\item[(a)] In \cite[Theorem 3.3]{CouDevMar2}, the class of associative, commutative, and quasitrivial operations was characterized. In particular, it was shown that there are exactly $n!$ associative, commutative, and quasitrivial operations on $X_n$. Using Corollary \ref{cor:equiv}(i) we can replace associativity with bisymmetry in \cite[Theorem 3.3]{CouDevMar2}.
\item[(b)] In \cite[Theorem 3.7]{CouDevMar2}, the class of associative, commutative, quasitrivial, and order-preserving operations was characterized. In particular, it was shown that there are exactly $2^{n-1}$ associative, commutative, quasitrivial, and $\leq_n$-preserving operations on $X_n$. Using Corollary \ref{cor:equiv}(i) we can replace associativity with bisymmetry in \cite[Theorem 3.7]{CouDevMar2}.
\end{enumerate}
\end{remark}

\begin{lemma}[{see \cite[Proposition~4]{CouDevMar}}]\label{lem:iso}
Let $F\colon X^2\to X$ be a quasitrivial operation and let $e\in X$. Then $e$ is a neutral element of $F$ iff $(e,e)$ is $F$-isolated.
\end{lemma}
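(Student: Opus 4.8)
The plan is to prove the two implications separately, in both cases exploiting the single basic consequence of quasitriviality that matters here: if $F(u,v)=e$ for some $(u,v)\in X^2$, then $e\in\{u,v\}$ (since $F(u,v)\in\{u,v\}$), and in particular $F(e,e)\in\{e\}$ forces $F(e,e)=e$.

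For the necessity, I would assume that $e$ is a neutral element and show that $(e,e)$ is $F$-isolated. Take any $(u,v)\in X^2$ that is $F$-connected to $(e,e)$, that is, $F(u,v)=F(e,e)=e$. Quasitriviality gives $e\in\{u,v\}$. If $u=e$, then $v=F(e,v)=e$ because $e$ is neutral; if instead $v=e$, then $u=F(u,e)=e$ for the same reason. In either case $(u,v)=(e,e)$, so no point other than $(e,e)$ is $F$-connected to it, i.e.\ $(e,e)$ is $F$-isolated.

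For the sufficiency, I would assume that $(e,e)$ is $F$-isolated and check the defining equations of a neutral element. First, $F(e,e)=e$ by quasitriviality. Now fix $x\in X\setminus\{e\}$. Quasitriviality yields $F(e,x)\in\{e,x\}$; were $F(e,x)=e$, then $(e,x)$ would be a point distinct from $(e,e)$ with $F(e,x)=e=F(e,e)$, contradicting the $F$-isolation of $(e,e)$. Hence $F(e,x)=x$, and the symmetric argument applied to $F(x,e)$ gives $F(x,e)=x$. Combined with $F(e,e)=e$, this shows $F(e,x)=F(x,e)=x$ for all $x\in X$, so $e$ is a neutral element of $F$.

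Since each step is a one-line deduction from the definitions and quasitriviality, there is no genuine obstacle; the only point deserving a moment's care is recording that quasitriviality already forces $F(e,e)=e$, which is what lets the isolation hypothesis be used in the sufficiency direction.
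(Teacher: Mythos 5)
Your proof is correct: both directions are straightforward applications of quasitriviality and the definitions of neutral element and $F$-isolation, and the key observation that quasitriviality forces $F(e,e)=e$ is properly recorded. The paper itself gives no proof of this lemma (it is imported from \cite[Proposition~4]{CouDevMar}), and your argument is exactly the standard direct verification one would expect there, so there is nothing to flag.
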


For any integer $n\geq 1$, any $F\colon X_n^2\to X_n$, and any $z\in X_n$, the \emph{F-degree of $z$}, denoted $\deg_F(z)$, is the number of points $(x,y)\in X_n^2\setminus\{(z,z)\}$ such that $F(x,y)=F(z,z)$. Also, the \emph{degree sequence of $F$}, denoted $\deg_F$, is the nondecreasing $n$-element sequence of the degrees $\deg_F(x)$, $x\in X_n$. For instance, the degree sequence of the operation $F\colon X_3^2 \to X_3$ defined in Figure \ref{fig:fs} is $\deg_F = (1,2,3)$.

\begin{remark}
If $F\colon X_n^2\to X_n$ is a quasitrivial operation, then there exists at most one element $x \in X_n$ such that $\deg_F(x) = 0$. Indeed, otherwise by Lemma \ref{lem:iso}, $F$ would have at least two distinct neutral elements, a contradiction.
\end{remark}

\begin{lemma}\label{lem:deg}
If $F\colon X_n^2\to X_n$ is quasitrivial, then for all $x \in X_n$, we have $\deg_F(x) \leq 2(n-1)$.
\end{lemma}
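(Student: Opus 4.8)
The plan is to exploit the fact that a quasitrivial operation is idempotent, together with the observation that any point in a level set of $F$ sharing the value $F(z,z)$ must lie on the row or the column through $(z,z)$.

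First I would record that quasitriviality forces $F(z,z)\in\{z\}$, so $F(z,z)=z$ for every $z\in X_n$; hence $\deg_F(z)$ is exactly the number of points $(x,y)\in X_n^2\setminus\{(z,z)\}$ with $F(x,y)=z$. Next, if $F(x,y)=z$ then, again by quasitriviality, $z=F(x,y)\in\{x,y\}$, so $x=z$ or $y=z$. Consequently every point contributing to $\deg_F(z)$ belongs to the set $(\{z\}\times X_n)\cup(X_n\times\{z\})$. This set has exactly $2n-1$ elements, since the row and the column through $(z,z)$ overlap only in the single point $(z,z)$; removing $(z,z)$ leaves at most $2n-2=2(n-1)$ candidate points, whence $\deg_F(z)\le 2(n-1)$.

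There is essentially no obstacle here: the argument is a short counting argument, and the only thing requiring a little care is not double-counting $(z,z)$ when forming the union of its row and its column. One could add in passing that the bound is tight, with $\deg_F(z)=2(n-1)$ if and only if $z$ is an annihilator of $F$, since in that case all $2n-1$ points of the above cross are mapped to $z$.
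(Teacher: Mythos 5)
Your argument is correct and is essentially the paper's own proof, just written out in more detail: quasitriviality forces every point $F$-connected to $(z,z)$ to lie on the row or column through $(z,z)$, and the cross minus its center has $2(n-1)$ points. Your closing remark about tightness is also consistent with the paper, which records it separately as Proposition~\ref{prop:an}.
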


\begin{proof}
Let $x \in X_n$. Since $F\colon X_n^2\to X_n$ is quasitrivial, the point $(x,x)$ can only be $F$-connected to points of the form $(x,y)$ or $(y,x)$ for some $y \in X_n\setminus \{x\}$.
\end{proof}

The following result was mentioned in \cite[Section 2]{CouDevMar2} without proof.

\begin{proposition}\label{prop:an}
Let $F\colon X_n^2\to X_n$ be a quasitrivial operation and let $a \in X_n$. Then $a$ is an annihilator of $F$ iff $\deg_F(a) = 2(n-1)$.
\end{proposition}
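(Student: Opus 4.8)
The plan is to prove both directions by counting $F$-connected points, using the structural constraints imposed by quasitriviality.

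For necessity, suppose $a$ is an annihilator of $F$. Then for every $x \in X_n \setminus \{a\}$ we have $F(a,x) = F(x,a) = a = F(a,a)$, so each of these $2(n-1)$ points is $F$-connected to $(a,a)$. By Lemma~\ref{lem:deg} (or directly, since quasitriviality forces the only possible points $F$-connected to $(a,a)$ to be of the form $(a,y)$ or $(y,a)$), these are the only points $(x,y) \neq (a,a)$ with $F(x,y) = a$. Hence $\deg_F(a) = 2(n-1)$.

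For sufficiency, suppose $\deg_F(a) = 2(n-1)$. As observed in the proof of Lemma~\ref{lem:deg}, the point $(a,a)$ can only be $F$-connected to points of the form $(a,y)$ or $(y,a)$ with $y \in X_n \setminus \{a\}$, and there are exactly $2(n-1)$ such points. Since $\deg_F(a) = 2(n-1)$, every one of these points must be $F$-connected to $(a,a)$, i.e., $F(a,y) = F(y,a) = F(a,a)$ for all $y \in X_n \setminus \{a\}$. It remains to check that $F(a,a) = a$: this is immediate when $n = 1$, and when $n \geq 2$ we may pick any $y \neq a$ and use quasitriviality, $F(a,a) = F(a,y) \in \{a,y\}$ and also $F(a,a) = F(y,a) \in \{y,a\}$; if $F(a,a) = y$ then from $F(a,a) = F(a,y')$ for another index we would get a contradiction, but more directly, since $F(a,a) = F(a,y)$ holds for \emph{all} $y \neq a$ and the values $F(a,y)$ cannot all equal a single fixed $y_0 \neq a$ once $n \geq 3$, while for $n = 2$ quasitriviality gives $F(a,a) \in \{a\}$ directly from idempotency-type reasoning—here the cleanest argument is: $F$ quasitrivial implies $F$ idempotent, so $F(a,a) = a$. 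Then $F(a,y) = F(y,a) = a$ for all $y \in X_n$, which is exactly the statement that $a$ is an annihilator.

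The only subtle point is the very last step, namely deducing $F(a,a) = a$; but this is trivial since quasitriviality implies idempotency ($F(a,a) \in \{a,a\} = \{a\}$), so there is in fact no real obstacle. The heart of the argument is simply the counting observation already recorded in the proof of Lemma~\ref{lem:deg}: the degree of $a$ is capped at $2(n-1)$ precisely by the $2(n-1)$ points in the "cross" through $(a,a)$, so attaining the cap is equivalent to the entire cross collapsing onto the value $F(a,a) = a$, which is the definition of an annihilator.
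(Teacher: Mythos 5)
Your proof is correct and follows essentially the same route as the paper's: both directions rest on the observation that quasitriviality confines the points $F$-connected to $(a,a)$ to the ``cross'' $\{(a,y),(y,a) : y\neq a\}$ of size $2(n-1)$, together with $F(a,a)=a$ from idempotency. The detour in your sufficiency argument before settling on $F(a,a)\in\{a,a\}=\{a\}$ is unnecessary but harmless.
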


\begin{proof}
(Necessity) By definition of an annihilator, we have $F(x,a) = F(a,x) = a = F(a,a)$ for all $x \in X_{n}\setminus\{a\}$. Thus, we have $\deg_F(a) \geq 2(n-1)$ and hence by Lemma \ref{lem:deg} we conclude that $\deg_F(a) = 2(n-1)$.

(Sufficiency) By quasitriviality, the point $(a,a)$ cannot be $F$-connected to a point $(u,v)$ where $u \neq a$ and $v \neq a$. Thus, since $\deg_F(a) = 2(n-1)$, we have $a=F(a,a)=F(x,a)=F(a,x)$ for all $x \in X_n \setminus \{a\}$.
\end{proof}

\begin{remark}
We observe that Proposition \ref{prop:an} no longer holds if we relax quasitriviality into idempotency. Indeed, the operation $F\colon X_3^2 \to X_3$ whose contour plot is depicted in Figure \ref{fig:an} is idempotent and the element $a=1$ is the annihilator of $F$. However, $\deg_F(1) = 6 > 4$.
\end{remark}

\setlength{\unitlength}{5.0ex}
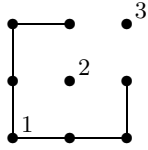
\begin{figure}[htbp]
\begin{center}
\begin{small}
\begin{picture}(3,3)
\multiput(0.5,0.5)(0,1){3}{\multiput(0,0)(1,0){3}{\circle*{0.2}}}
\drawline[1](0.5,0.5)(0.5,2.5)\drawline[1](0.5,0.5)(2.5,0.5)\drawline[1](2.5,0.5)(2.5,1.5)\drawline[1](0.5,2.5)(1.5,2.5)
\put(0.75,0.75){\makebox(0,0){$1$}}\put(1.75,1.75){\makebox(0,0){$2$}}\put(2.75,2.75){\makebox(0,0){$3$}}
\end{picture}
\end{small}
\caption{An idempotent operation with an annihilator on $X_3$}
\label{fig:an}
\end{center}
\end{figure}


The \emph{projection operations} $\pi_1\colon X^2\to X$ and $\pi_2\colon X^2\to X$ are respectively defined by $\pi_1(x,y)=x$ and $\pi_2(x,y)=y$ for all $x,y\in X$.

Given a weak ordering $\lesssim$ on $X$, the \emph{maximum} (resp.\ \emph{minimum}) \emph{operation on $X$ w.r.t.\ $\lesssim$} is the commutative binary operation $\max_{\lesssim}$ (resp.\ $\min_{\lesssim}$) defined on $X^2\setminus\{(x,y)\in X^2\mid x\sim y,~x\neq y\}$ as $\max_{\lesssim}(x,y)=y$ (resp.\ $\min_{\lesssim}(x,y)=x$) whenever $x\lesssim y$. We also note that if $\lesssim$ reduces to a linear ordering, then the operation $\max_{\lesssim}$ (resp.\ $\min_{\lesssim}$) is defined everywhere on $X^2$.

The following theorem provides a characterization of the class of associative and quasitrivial operations on $X$. In \cite[Section~1.2]{Ack}, it was observed that this result is a simple consequence of two papers on idempotent semigroups (see ~\cite{Kim} and ~\cite{Dave}). It was also independently discovered by various authors (see, e.g.,~\cite[Corollary~1.6]{Kep81} and ~\cite[Theorem~1]{Lan80}). A short and elementary proof was also given in \cite[Theorem 2.1]{CouDevMar2}.

\begin{theorem}\label{thm:Kim}
An operation $F\colon X^2\to X$ is associative and quasitrivial iff there exists a weak ordering $\precsim$ on $X$ such that
\begin{equation}\label{eq:kimura}
F|_{A\times B} ~=~
\begin{cases}
\max_{\precsim}|_{A\times B}, & \text{if $A\neq B$},\\
\pi_1|_{A\times B}\hspace{1.5ex}\text{or}\hspace{1.5ex}\pi_2|_{A\times B}, & \text{if $A=B$},
\end{cases}
\qquad \forall A,B\in {X/\sim}.
\end{equation}
\end{theorem}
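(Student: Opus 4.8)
The plan is to prove Theorem~\ref{thm:Kim} by establishing both directions, treating the ``if'' direction as a routine verification and concentrating effort on the ``only if'' direction.

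\medskip

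\emph{Sufficiency.} Suppose $F$ has the stated form relative to a weak ordering $\precsim$. I would first observe that quasitriviality is immediate: on a block $A=B$ the operation is a projection, hence returns an input, and on $A\neq B$ it returns $\max_{\precsim}(x,y)\in\{x,y\}$ since $x,y$ lie in different $\sim$-classes. For associativity, I would check $F(F(x,y),z)=F(x,F(y,z))$ by a case analysis on which $\sim$-classes $x,y,z$ fall into. When all three lie in distinct classes the value on both sides is the $\precsim$-largest of $x,y,z$; when two coincide one reduces to the projection-on-a-block case combined with the $\max$ behaviour across blocks. The only mildly delicate point is consistency: the choice of $\pi_1$ or $\pi_2$ on a block is made independently per block, and one must see that associativity never mixes two different blocks in a way that forces these independent choices to interact --- it does not, because $F$ takes values in $\{x,y\}$, and the two operands of the outer $F$ can lie in the same block as each other only if the relevant inputs already did.

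\medskip

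\emph{Necessity.} Assume $F$ is associative and quasitrivial. First note $F$ is idempotent (take $y=z=x$ in quasitriviality, or simply $F(x,x)\in\{x\}$), so $(X,F)$ is an idempotent semigroup. The key step is to define the relation $x\precsim y$ by $F(x,y)=y$ \emph{and} $F(y,x)=y$, i.e.\ $y$ ``absorbs'' $x$ on both sides; equivalently one may define $x\precsim y$ iff $\{x,y\}$ generates a subsemigroup whose only idempotent-fixed behaviour picks out $y$. I would then show $\precsim$ is a weak ordering: totality follows because for any $x,y$ quasitriviality forces $F(x,y),F(y,x)\in\{x,y\}$, and a short associativity argument rules out the ``mixed'' possibility $F(x,y)=x$, $F(y,x)=x$ together with neither $x\precsim y$ nor $y\precsim x$ --- in fact one shows the four $a\,{:=}\,F(x,y)$, $b\,{:=}\,F(y,x)$ configurations collapse so that either $F(x,y)=F(y,x)$ (the comparable case) or $F(x,y)=x,F(y,x)=y$ (the ``same block, projections'' case). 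Transitivity is the heart of the matter and is where associativity does the real work: from $F(x,y)=y=F(y,x)$ and $F(y,z)=z=F(z,y)$ one computes $F(x,z)=F(x,F(y,z))=F(F(x,y),z)=F(y,z)=z$ and symmetrically $F(z,x)=z$, giving $x\precsim z$. Once $\precsim$ is a weak ordering with symmetric part $\sim$, I would verify that $x\sim y$ with $x\neq y$ forces $F$ restricted to $\{x,y\}^2$ to be a projection (both $\pi_1$ and $\pi_2$ are associative and quasitrivial on a two-element set, and the ``mixed'' tables are not associative), and that this choice is uniform across each class $A\in X/{\sim}$ by another associativity argument: if $F(x,y)=x$ but $F(y,w)=w$ for $x,y,w$ in one class, then $F(x,w)=F(F(x,y),w)=F(x,F(y,w))=F(x,w)$ gives no contradiction directly, so one instead uses a third element and the relation $F(x,y)=x$, $F(x,w)=?$ to pin the pattern down. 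Finally, for $A\neq B$ and $x\in A$, $y\in B$ with, say, $x<y$, quasitriviality gives $F(x,y)\in\{x,y\}$ and the definition of $\precsim$ together with associativity forces $F(x,y)=y=\max_{\precsim}(x,y)$.

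\medskip

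I expect the main obstacle to be the \emph{uniformity of the projection choice within a $\sim$-class}: showing that one cannot have $F|_{\{x,y\}^2}=\pi_1$ and $F|_{\{y,z\}^2}=\pi_2$ for three mutually $\sim$-equivalent elements $x,y,z$. The clean way through is to note that on the three-element set $\{x,y,z\}$ inside a single class, $F$ is a quasitrivial associative (hence idempotent-semigroup) operation with no nontrivial $\precsim$-comparabilities, so it is a band that is a ``rectangular band'' intersected with quasitriviality on a set --- and the only quasitrivial bands on a set where all elements are $\sim$-equivalent are the left-zero and right-zero semigroups, i.e.\ $\pi_1$ and $\pi_2$ globally on that class. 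Making that last claim rigorous via a direct associativity computation (e.g.\ expanding $F(F(x,y),z)$ two ways and using quasitriviality to list the finitely many cases) is the one place where care is genuinely required; everything else is bookkeeping. Since the excerpt explicitly allows citing \cite[Theorem~2.1]{CouDevMar2} or the semigroup-theoretic sources, an acceptable shortcut is to invoke the known structure theory of idempotent semigroups (Clifford's/McLean's decomposition into a semilattice of rectangular bands), under which the semilattice is exactly $X/{\sim}$ and quasitriviality forces each rectangular band component to be a single row or column, i.e.\ a projection.
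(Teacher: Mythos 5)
The paper does not actually prove Theorem~\ref{thm:Kim}: it is stated as a known result, with the proof delegated to the cited literature (Kimura, McLean, Kepka, L\"anger, Ackerman) and to the elementary argument in \cite[Theorem 2.1]{CouDevMar2}. Your overall architecture --- extract a weak ordering from $F$, show that strictly comparable pairs behave like $\max_{\precsim}$ and equivalent pairs like a projection, and prove uniformity of the projection choice on each $\sim$-class --- is exactly that standard elementary route, and you correctly identify the uniformity of the projection choice as the crux; either a direct associativity computation on a suitable triple (e.g.\ comparing $F(F(y,x),z)$ with $F(y,F(x,z))$ when $\{x,y\}$ carries $\pi_1$ and $\{y,z\}$ carries $\pi_2$) or McLean's decomposition together with the observation that a quasitrivial rectangular band is left- or right-zero closes that gap.

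There is, however, one step that fails as written: the definition $x\precsim y$ iff $F(x,y)=y$ \emph{and} $F(y,x)=y$ does not give a total relation. If $F$ restricted to $\{x,y\}^2$ is $\pi_1$ (so $F(x,y)=x$ and $F(y,x)=y$), then neither $x\precsim y$ nor $y\precsim x$ holds under your definition, so precisely the pairs that must end up $\sim$-equivalent come out incomparable. The correct definition is the disjunctive one the paper records in Remark~\ref{rem:con}: $x\precsim y$ iff $F(x,y)=y$ \emph{or} $F(y,x)=y$. With that definition, totality is immediate from quasitriviality and projection blocks give $x\sim y$, but transitivity then requires a longer case analysis than the single computation you display, since $x\precsim y$ may hold via only one of the two products. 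A second, smaller error: your parenthetical claim that the ``mixed'' two-element tables are not associative is false --- every quasitrivial binary operation on a two-element set is associative (the four possibilities are $\pi_1$, $\pi_2$, and the two semilattice tables); what forces a projection on $\{x,y\}^2$ when $x\sim y$ is the corrected definition of $\sim$ itself, not associativity. With these repairs your outline does yield a complete proof along the same lines as the one the paper cites.
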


\begin{remark}\label{rem:con}
As observed in \cite[Section 2]{CouDevMar2}, for any associative and quasitrivial operation $F\colon X^2 \to X$, there is exactly one weak ordering $\precsim$ on $X$ for which $F$ is of the form \eqref{eq:kimura}. This weak ordering is defined by: $x \precsim y$ iff $F(x,y)=y$ or $F(y,x)=y$. Moreover, as observed in \cite[Corollary 2.3]{CouDevMar2}, if $X=X_n$ for some integer $n \geq 1$, then $\precsim$ can be defined as follows: $x \precsim y$ ~iff~ $\deg_F(x) \leq \deg_F(y)$. The latter observation follows from \cite[Proposition 2.2]{CouDevMar2} which states that for any $x \in X_n$ we have
\begin{equation}\label{eq:degx}
\deg_F(x) ~=~ 2\times|\{z\in X_n\mid z\prec x\}|+|\{z\in X_n\mid z\sim x,~z\neq x\}|.
\end{equation}
\end{remark}

Some associative and quasitrivial operations are bisymmetric. For instance, so are the projection operations $\pi_1$ and $\pi_2$. However, some other operations are not bisymmetric. Indeed, the operation $F\colon X_3^2 \to X_3$ whose contour plot is depicted in Figure \ref{fig:kim} is associative and quasitrivial since it is of the form \eqref{eq:kimura} for the weak ordering $\precsim$ on $X_3$ defined by $2 \prec 1 \sim 3$. However, this operation is not bisymmetric since $F(F(2,3),F(1,2)) = F(3,1) = 3 \neq 1 = F(1,3) = F(F(2,1),F(3,2))$. In the next section, we provide a characterization of the subclass of bisymmetric and quasitrivial operations (see Theorem \ref{thm:B}).

\setlength{\unitlength}{5.0ex}
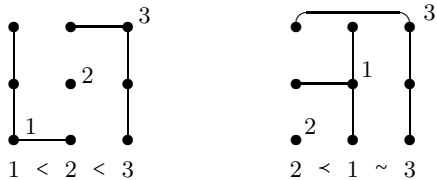
\begin{figure}[tbp]
\begin{center}
\begin{small}
\begin{picture}(3,3)
\multiput(0.5,0.5)(0,1){3}{\multiput(0,0)(1,0){3}{\circle*{0.2}}}
\drawline[1](0.5,0.5)(0.5,2.5)\drawline[1](0.5,0.5)(1.5,0.5)
\drawline[1](2.5,2.5)(2.5,0.5)\drawline[1](2.5,2.5)(1.5,2.5)
\put(0.8,0.75){\makebox(0,0){$1$}}
\put(1.8,1.65){\makebox(0,0){$2$}}
\put(2.8,2.7){\makebox(0,0){$3$}}
\put(0.5,0){\makebox(0,0){$1$}}\put(1,0){\makebox(0,0){$<$}}\put(1.5,0){\makebox(0,0){$2$}}
\put(2,0){\makebox(0,0){$<$}}\put(2.5,0){\makebox(0,0){$3$}}
\end{picture}
\hspace{0.10\textwidth}
\begin{picture}(3,3)
\multiput(0.5,0.5)(0,1){3}{\multiput(0,0)(1,0){3}{\circle*{0.2}}}
\drawline[1](1.5,2.5)(1.5,0.5)\drawline[1](1.5,1.5)(0.5,1.5)\drawline[1](2.5,2.5)(2.5,0.5) \put(1.5,2.5){\oval(2,0.5)[t]}
\put(0.75,0.75){\makebox(0,0){$2$}}
\put(1.75,1.75){\makebox(0,0){$1$}}
\put(2.85,2.75){\makebox(0,0){$3$}}
\put(0.5,0){\makebox(0,0){$2$}}\put(1,0){\makebox(0,0){$\prec$}}\put(1.5,0){\makebox(0,0){$1$}}
\put(2,0){\makebox(0,0){$\sim$}}\put(2.5,0){\makebox(0,0){$3$}}
\end{picture}
\end{small}
\caption{An associative and quasitrivial operation on $X_3$ that is not bisymmetric}
\label{fig:kim}
\end{center}
\end{figure}

%

\section{Characterizations of bisymmetric and quasitrivial operations}

In this section we provide characterizations of the class of bisymmetric and quasitrivial operations $F\colon X^2 \to X$ as well as the subclass of bisymmetric, quasitrivial, and $\leq$-preserving operations $F\colon X^2\to X$ for some fixed linear ordering $\leq$ on $X$.

\begin{definition}\label{def:quasi}
We say that a weak ordering $\precsim$ on $X$ is \emph{quasilinear} if there exist no pairwise distinct $a,b,c\in X$ such that $a \prec b \sim c$.
\end{definition}

Thus defined, a weak ordering $\precsim$ on $X$ is quasilinear iff for every $x,y \in X$, $x\neq y$, we have that $x \sim y$ implies $x,y \in \min_{\precsim}X$.

\begin{remark}
For any integer $n \geq 1$, the weak orderings $\precsim$ on $X_n$ that are quasilinear are known in social choice theory as \emph{top orders} (see, e.g., \cite[Section 2]{Fi15}).
\end{remark}

\begin{fact}\label{lem:count}
Let $\precsim$ be a quasilinear weak ordering on $X$.
\begin{itemize}
\item If $|\min_{\precsim}X| = 1$ then it is a linear ordering.
\item If $\max_{\precsim}X \neq \emptyset$, then $\max_{\precsim}X = X$ or $|\max_{\precsim}X| = 1$.
\end{itemize}
\end{fact}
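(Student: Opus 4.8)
The plan is to work directly from the equivalent description of quasilinearity recorded just after Definition~\ref{def:quasi}: a weak ordering $\precsim$ on $X$ is quasilinear if and only if every $\sim$-class containing at least two elements is contained in $\min_{\precsim}X$. Two standing observations will be used repeatedly. First, each of $\min_{\precsim}X$ and $\max_{\precsim}X$ is either empty or exactly one $\sim$-class (the $<$-smallest, resp.\ the $<$-largest, class of the quotient $X/\sim$). Second, maximality and minimality for $\precsim$ are $\sim$-invariant: if $m\in\max_{\precsim}X$ and $x\sim m$, then $y\precsim m\precsim x$ for all $y\in X$, so $x\in\max_{\precsim}X$, and symmetrically for minimality.

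For the first bullet I would argue by contradiction. Suppose $|\min_{\precsim}X|=1$ but $\precsim$ is not a linear ordering; since $\precsim$ is total and transitive, the only way linearity can fail is that antisymmetry fails, so there exist distinct $x,y\in X$ with $x\sim y$. Quasilinearity then forces both $x$ and $y$ into $\min_{\precsim}X$, contradicting $|\min_{\precsim}X|=1$. Hence $\precsim$ is antisymmetric, i.e., a linear ordering.

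For the second bullet, assume $\max_{\precsim}X\neq\emptyset$ and $|\max_{\precsim}X|\geq 2$; the goal is to show $\max_{\precsim}X=X$. By the first observation, $\max_{\precsim}X$ is a $\sim$-class with at least two elements, so quasilinearity gives $\max_{\precsim}X\subseteq\min_{\precsim}X$. Fixing any $m\in\max_{\precsim}X$, we then have that $m$ is simultaneously maximal and minimal for $\precsim$, so $x\precsim m$ and $m\precsim x$ for every $x\in X$, whence $x\sim m$. Thus $X=[m]_{\sim}$, and by $\sim$-invariance of maximality every element of $X$ is maximal, i.e., $\max_{\precsim}X=X$.

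There is no genuine obstacle here; the argument is a short chase through the definitions. The only points needing a moment's care are that, when $X$ is infinite, $\min_{\precsim}X$ and $\max_{\precsim}X$ may be empty (which is precisely why both statements are phrased conditionally), and the $\sim$-invariance of extremality used to upgrade ``one element is maximal'' to ``all of $X$ is maximal''. I would state these two small facts explicitly at the outset, after which both assertions follow immediately.
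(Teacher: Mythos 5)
Your proof is correct. The paper states this as a \emph{Fact} and offers no proof at all, evidently regarding it as immediate from the reformulation of quasilinearity recorded just after Definition~\ref{def:quasi} (namely that $x\sim y$ with $x\neq y$ forces $x,y\in\min_{\precsim}X$); your argument is exactly the definition-chase the author intends, with the two auxiliary observations (that $\max_{\precsim}X$ and $\min_{\precsim}X$ are single $\sim$-classes when nonempty, and that extremality is $\sim$-invariant) correctly supplied and correctly used.
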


\begin{proposition}\label{prop:jim}
Let $F \colon X^2 \to X$ be of the form \eqref{eq:kimura} for some weak ordering $\precsim$ on $X$. Then, $\precsim$ is quasilinear iff for any linear ordering $\preceq'$ on $X$ subordinated to $\precsim$, $F$ is $\preceq'$-preserving.
\end{proposition}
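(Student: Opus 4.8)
The statement is an "iff", so I would prove the two implications separately, and both via contraposition seems cleanest.

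For the easier direction (quasilinear $\Rightarrow$ every subordinated linear ordering makes $F$ order-preserving): suppose $\precsim$ is quasilinear and let $\preceq'$ be any linear ordering on $X$ subordinated to $\precsim$. By Proposition~\ref{prop:jump} it suffices to show $F$ has no $\preceq'$-disconnected level set, and by Lemma~\ref{lem:dis} (since $F$, being of the form \eqref{eq:kimura}, is quasitrivial) it suffices to rule out horizontal and vertical $\preceq'$-disconnected level sets. So suppose, for contradiction, there are $x,y,z,u$ with $x \prec' y \prec' z$ and $F(x,u) = F(z,u) \neq F(y,u)$ (the vertical case is symmetric). I would analyze this using the description \eqref{eq:kimura}: the value $F(a,u)$ is governed by comparing the $\sim$-classes of $a$ and $u$ under $\precsim$. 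Since $x \prec' y \prec' z$ and $\preceq'$ is subordinated to $\precsim$, we cannot have $x \succ u \succ \ldots$ type reversals; carefully, subordination means $a < b$ (strict in $\precsim$) forces $a \prec' b$, equivalently $a \preceq' b$ forces $a \precsim b$. The only way $F(x,u)$ and $F(z,u)$ can agree while $F(y,u)$ differs, given that $\precsim$ essentially determines the output except within a common class where $F$ is a projection, is to have $x,y,z$ straddle the class of $u$ or lie partly inside it; quasilinearity forbids the problematic configuration $u \prec$ (something) $\sim$ (something else), which is exactly what would be needed to have two of $x,y,z$ equivalent to $u$ and handled by a projection while the third is not. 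This case check is routine but needs care.

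For the harder direction (not quasilinear $\Rightarrow$ some subordinated linear ordering makes $F$ not order-preserving): suppose $\precsim$ is not quasilinear, so there exist pairwise distinct $a,b,c$ with $a \prec b \sim c$. Here $b,c$ lie in a common class $B = [b]_\sim = [c]_\sim$ that is not the minimum class, and on $B \times B$ the operation $F$ equals $\pi_1|_{B\times B}$ or $\pi_2|_{B\times B}$; WLOG say $F|_{B\times B} = \pi_1|_{B\times B}$, so $F(b,c) = b$ and $F(c,b) = c$. I would then build a linear ordering $\preceq'$ subordinated to $\precsim$ by refining $\precsim$: since subordination only constrains comparisons across distinct $\precsim$-classes, I am free to order $B$ internally however I like, and in particular I can place $c \prec' b$ while $a \prec' c$ (as $a \prec b$ forces $a \prec' b$ and $a\prec' c$). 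Now look at the pair of arguments with second coordinate fixed: $F(c, b) = c$ and $F(b,b) = b$ (idempotent), so with $c \prec' b$ we have arguments $(c,b) \preceq' (b,b)$ but $F(c,b) = c \prec' b = F(b,b)$ — wait, that is order-preserving. The real witness: I want a strict decrease. Consider $F(b,c)$ versus $F(b,b)$: $F(b,c) = b = F(b,b)$, no help. So instead use the third element: $F(a,b)=b$ (since $a\prec b$, the output is the $\precsim$-larger, namely $b$) while $F(a,c) = c$, and I have arranged $c \prec' b$ while $b \prec'$ or $\succ' c$... I need $b \prec' c$ here to get a decrease. Let me instead choose $b \prec' c$: then $a \prec' b \prec' c$, all forced-consistent with subordination, and $F(a,b) = b$, $F(a,c) = c$, $F(c,c) = c$ — consider moving the second argument from $b$ up to $c$: $F(a,b) = b \prec' c = F(a,c)$, increasing, fine; but now consider $F(b,c)$ vs $F(c,c)$: with $F|_{B\times B} = \pi_1$ we get $F(b,c) = b \prec' c = F(c,c)$, increasing. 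The decrease must come from comparing a within-class projection value against a cross-class value. The clean witness is: take arguments $(a, c) \prec' (b, c)$ (valid since $a \prec' b$ and second coordinates equal), where $F(a,c) = c$ but $F(b,c) = b$; since $b \prec' c$, this gives $F(b,c) = b \prec' c = F(a,c)$, i.e., the larger input pair has the smaller output — a horizontal $\preceq'$-disconnected failure of order-preservation. (If instead $F|_{B\times B} = \pi_2$, swap the roles of $b$ and $c$ throughout.) Then Proposition~\ref{prop:jump} (or directly the definition of $\preceq'$-preserving) yields that $F$ is not $\preceq'$-preserving.

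The main obstacle is the necessity (first) direction's case analysis: one must systematically go through, for a putative horizontal disconnected level set $F(x,u) = F(z,u) \neq F(y,u)$ with $x \prec' y \prec' z$, the positions of $[x]_\sim, [y]_\sim, [z]_\sim$ relative to $[u]_\sim$, using both quasitriviality (so that outputs lie in $\{$argument$\}$) and \eqref{eq:kimura}, and show each case either cannot occur or forces a violation of quasilinearity. The key leverage is the characterization of quasilinearity right after Definition~\ref{def:quasi}: $x \sim y$ with $x \neq y$ forces $x, y \in \min_{\precsim} X$; this collapses most cases immediately, since any element $\precsim$-equivalent to $u$ but with something strictly below it is impossible. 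I would also record at the outset that, by subordination, $a \preceq' b$ implies $a \precsim b$ for all $a,b$, which is the form of the hypothesis I will actually use.
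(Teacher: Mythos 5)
Your proposal is correct and, on the decisive direction, essentially identical to the paper's proof: to show that non-quasilinearity fails order-preservation you arrive at exactly the paper's witness, namely $a\prec' b\prec' c$ with $F|_{[b]_{\sim}^2}=\pi_1|_{[b]_{\sim}^2}$ giving $(a,c)\preceq'(b,c)$ componentwise yet $F(b,c)=b\prec' c=F(a,c)$ (and the symmetric witness for $\pi_2$). For the converse direction the paper simply asserts that quasilinearity plus the form \eqref{eq:kimura} yields $\preceq'$-preservation, whereas you propose routing it through Proposition~\ref{prop:jump} and Lemma~\ref{lem:dis} and then doing a case analysis on disconnected level sets; that detour is valid but unnecessary (a direct check that $F$ is nondecreasing in each argument, using that under quasilinearity every non-minimal $\sim$-class is a singleton, is shorter), and in either case you leave the verification at the same sketch level as the paper does.
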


\begin{proof}
(Necessity) By the form \eqref{eq:kimura} of $F$ and by quasilinearity of $\precsim$, $F$ is $\preceq'$-preserving for any linear ordering $\preceq'$ on $X$ subordinated to $\precsim$.

(Sufficiency) We proceed by contradiction. Suppose that $F$ is $\preceq'$-preserving for any linear ordering $\preceq'$ on $X$ subordinated to $\precsim$. Suppose also that there exist pairwise distinct $a,b,c\in X$, such that $a \prec b \sim c$. Fix a linear ordering $\preceq'$ on $X$ subordinated to $\precsim$. Suppose that $a \prec' b \prec' c$ (the other case is similar). If $F|_{[b]_{\sim}^2} = \pi_{1}|_{[b]_{\sim}^2}$, then
$$
F(b,c) = b \prec' c = F(a,c),
$$
a contradiction. The case where $F|_{[b]_{\sim}^2} = \pi_{2}|_{[b]_{\sim}^2}$ is similar.
\end{proof}

\begin{proposition}\label{prop:BS}
Let $F \colon X^2 \to X$ be of the form \eqref{eq:kimura} for some weak ordering $\precsim$ on $X$. Then, $F$ is bisymmetric iff $\precsim$ is quasilinear.
\end{proposition}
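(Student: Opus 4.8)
The plan is to prove both directions by working through the structure given by \eqref{eq:kimura}, relying on Theorem~\ref{thm:Kim} (so that $F$ is associative and quasitrivial, hence we may use associativity freely) and on Proposition~\ref{prop:jim}.

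\medskip

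\noindent\textbf{Sufficiency.} Suppose $\precsim$ is quasilinear; I want to show $F$ is bisymmetric. The key reduction is that for quasilinear $\precsim$, the only equivalence class that can be nontrivial is $\min_{\precsim}X$, and on a singleton class the two options in \eqref{eq:kimura} coincide. So there are essentially two cases. If every class is a singleton, then $\precsim$ is a linear ordering and $F$ is either $\max_{\precsim}$ or, on the diagonal of each class (which is empty), $\pi_1$ vs $\pi_2$ — in fact $F = \max_{\precsim}$ everywhere, which is associative and commutative, hence bisymmetric by Lemma~\ref{lemma:bis}(ii). The substantive case is when $M := \min_{\precsim}X$ has at least two elements and $F|_{M^2}$ equals $\pi_1$ or $\pi_2$ (say $\pi_1$; the other case is symmetric, swapping the roles of the arguments). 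Here I would verify the bisymmetry equation $F(F(x,y),F(u,v)) = F(F(x,u),F(y,v))$ by a case analysis on how many of $x,y,u,v$ lie in $M$. When all four lie in $M$, both sides reduce to $x$ since $F|_{M^2}=\pi_1$. When some argument lies outside $M$, I use that any element of $X\setminus M$ strictly dominates every element of $M$ and that distinct elements outside $M$ are $\prec$-comparable (quasilinearity forces $X\setminus M$ to be linearly ordered by $\prec$), so on any pair not both in $M$, $F$ agrees with $\max_{\precsim}$. A clean way to organize this: for quasilinear $\precsim$ with $F|_{M^2}=\pi_1$, one checks directly that $F(x,y)$ equals $\max_{\precsim}(x,y)$ whenever $x\not\sim y$, and equals $x$ when $x\sim y$; both sides of the bisymmetry equation can then be computed and seen to agree by tracking which of the four variables is $\precsim$-largest (ties only occurring within $M$, where the $\pi_1$-rule consistently picks the left argument). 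This is a finite case check with no real obstruction beyond bookkeeping.

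\medskip

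\noindent\textbf{Necessity.} Suppose $\precsim$ is \emph{not} quasilinear; I want to produce a violation of bisymmetry. By Definition~\ref{def:quasi} there exist pairwise distinct $a,b,c \in X$ with $a \prec b \sim c$. The idea is to exploit the fact that on the class $[b]_{\sim} = [c]_{\sim}$, $F$ restricts to $\pi_1$ or $\pi_2$, and neither is compatible with bisymmetry once there is a strictly smaller element $a$ around. Concretely, assume $F|_{[b]_\sim^2} = \pi_2$ (the $\pi_1$ case is symmetric). Then I would plug in a suitable quadruple among $a,b,c$ — for instance compare $F(F(b,a),F(c,b))$ with $F(F(b,c),F(a,b))$: using $a\prec b\sim c$ and the $\max_{\precsim}$ rule across distinct classes together with the $\pi_2$ rule inside $[b]_\sim$, the left side evaluates to $F(b,b)$-type expression giving $b$ while the right side gives $c$ (or vice versa), contradicting $b\neq c$. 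Essentially this mirrors the computation already displayed in the proof of Proposition~\ref{prop:jim}, where $F(b,c) = b \prec' c = F(a,c)$ shows a failure of order-preservation; here I instead feed those unequal outputs into an outer application of $F$ to get two genuinely different values of the bisymmetry identity. Alternatively, and perhaps more cleanly, I can cite Proposition~\ref{prop:jim}: if $\precsim$ is not quasilinear then $F$ fails to be $\preceq'$-preserving for some linear ordering $\preceq'$ subordinated to $\precsim$; but a bisymmetric and quasitrivial $F$ of the form \eqref{eq:kimura}, being associative and quasitrivial, would — I'd need a lemma that bisymmetric quasitrivial operations are order-preserving w.r.t.\ every subordinated linear ordering, which is plausibly what Theorem~\ref{thm:BND} packages but may not yet be available at this point in the text. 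So the safer route for necessity is the explicit counterexample quadruple.

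\medskip

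\noindent\textbf{Main obstacle.} The main work is the sufficiency direction's case analysis: showing that the $\pi_1$-on-$\min_{\precsim}X$ operation (which is genuinely noncommutative, unlike the $\max$ case handled by Lemma~\ref{lemma:bis}) satisfies bisymmetry. The cleanest path is to first record the explicit description of $F$ for quasilinear $\precsim$ — namely $F = \max_{\precsim}$ off the bottom class and $F = \pi_1$ (or $\pi_2$) on the bottom class — and then observe that for any $x,y$, $F(x,y) \in \{x,y\}$ with $F(x,y)$ being the $\precsim$-larger one, and when $x\sim y$ it is the left (resp.\ right) one; bisymmetry then reduces to a short argument about the $\precsim$-maximum of four elements and how ties are broken left-consistently. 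I expect tie-handling — when several of $x,y,u,v$ lie in the bottom class — to be the one spot requiring care, since one must confirm that the left-projection rule on the inner applications feeds consistently into the outer application on both sides of the identity.
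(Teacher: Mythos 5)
Your sufficiency argument is correct and, once unwound, is essentially the paper's: the paper simply applies Fact~\ref{lem:count} to the four-element subset $\{x,y,u,v\}$ and notes that either all four are equivalent (so $F$ restricted to them is a projection, hence bisymmetric) or there is a unique $\precsim$-maximum among them, which is an annihilator of the restriction, so both sides of the identity collapse to it. That observation disposes of the ``tie-handling'' worry you flag at the end: ties only occur when all four arguments lie in the bottom class, and there a single projection handles both sides at once, so no bookkeeping about left-consistent tie-breaking across nested applications is needed.

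The necessity direction, however, contains a genuine error: the witness quadruple you propose does not violate bisymmetry. Take pairwise distinct $a,b,c$ with $a\prec b\sim c$ and $F|_{[b]_{\sim}^2}=\pi_2$ as you assume. Then $F(b,a)=b$, $F(c,b)=b$, so your left side is $F(b,b)=b$; and $F(b,c)=c$, $F(a,b)=b$, so your right side is $F(c,b)=\pi_2(c,b)=b$ as well. (Under $\pi_1$ the same quadruple gives $F(b,c)=b$ on the left and $F(b,b)=b$ on the right, so it fails there too.) Your claim that the right side ``gives $c$'' evaluates $F(c,b)$ by $\pi_1$ after having fixed $\pi_2$ on $[b]_{\sim}$. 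The repair is to arrange the inner applications so that one side produces the ordered pair $(c,b)$ and the other produces $(b,c)$ \emph{before} the final application of the projection; the paper's choice $F(F(a,c),F(b,a))$ versus $F(F(a,b),F(c,a))$ does exactly this, yielding $F(c,b)$ versus $F(b,c)$, which differ under either projection since $b\neq c$. Your fallback via Proposition~\ref{prop:jim} is, as you yourself suspect, not available here without first proving that bisymmetry forces $\preceq'$-preservation for subordinated linear orderings, which is downstream of this very proposition.
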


\begin{proof}
(Necessity) We proceed by contradiction. Suppose that $F$ is bisymmetric and suppose also that there exist pairwise distinct $a,b,c \in X$ such that $a \prec b \sim c$.  If $F|_{[b]_{\sim}^2} = \pi_{1}|_{[b]_{\sim}^2}$ then
$$
F(F(a,c),F(b,a)) = F(c,b) = c \neq b = F(b,c) = F(F(a,b),F(c,a)),
$$
a contradiction. The case where $F|_{[b]_{\sim}^2} = \pi_{2}|_{[b]_{\sim}^2}$ is similar.

(Sufficiency) Let $x,y,u,v \in X$, not all equal, and let us show that
$$F(F(x,y),F(u,v)) = F(F(x,u),F(y,v)).$$
By applying Fact \ref{lem:count} to the subset $\{x,y,u,v\}$ of $X$ we have two cases to consider.
\begin{itemize}
\item If $x \sim y \sim u \sim v$ then $F|_{\{x,y,u,v\}} = \pi_i|_{\{x,y,u,v\}}$, $i \in \{1,2\}$, and hence $F|_{\{x,y,u,v\}}$ is bisymmetric.
\item Otherwise, if $|\max_{\precsim}{\{x,y,u,v\}}| = 1$ then we can assume w.l.o.g. that $x \precsim y \precsim u \prec v$.
By the form \eqref{eq:kimura} of $F$ it is clear that $v$ is the annihilator of $F|_{\{x,y,u,v\}}$. Hence, we have
$$
F(F(x,y),F(u,v)) = v = F(F(x,u),F(y,v)).
$$
\end{itemize}
Therefore, $F$ is bisymmetric.
\end{proof}

The following proposition provides an additional characterization of the class of bisymmetric and quasitrivial operations when $X=X_n$ for some integer $n \geq 1$.

\begin{proposition}\label{thm:contour}
Let $F \colon X_n^2 \to X_n$ be quasitrivial. Then, $F$ is bisymmetric iff it is associative and satisfies
\begin{equation}\label{eq:deg}
\deg_F = (\underbrace{k-1,\ldots,k-1}_{\textstyle{k~ \text{times}}},2k,2k+2,\ldots,2n-2)
\end{equation}
for some $k \in \{1,\ldots,n\}$.
\end{proposition}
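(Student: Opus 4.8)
The plan is to combine Theorem~\ref{thm:Kim} and Proposition~\ref{prop:BS} with the degree formula \eqref{eq:degx} from Remark~\ref{rem:con}. Since $F\colon X_n^2\to X_n$ is quasitrivial, by Lemma~\ref{lemma:bis}(iii) bisymmetry already implies associativity, so in both directions we may assume $F$ is associative and quasitrivial and hence of the form \eqref{eq:kimura} for a unique weak ordering $\precsim$ on $X_n$ (Theorem~\ref{thm:Kim}, Remark~\ref{rem:con}). The key reduction is then: $F$ is bisymmetric iff $\precsim$ is quasilinear (Proposition~\ref{prop:BS}), so it suffices to show that for an associative quasitrivial $F$ of the form \eqref{eq:kimura}, the associated $\precsim$ is quasilinear iff $\deg_F$ has the stated shape \eqref{eq:deg}.

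First I would translate quasilinearity into a statement about the quotient $X_n/\!\sim$. A quasilinear $\precsim$ on $X_n$ has equivalence classes that are all singletons except possibly the bottom class $\min_{\precsim}X_n$, which may have some size $k\ge 1$; write the distinct classes in increasing $\prec$-order as $C_1\prec C_2\prec\cdots\prec C_m$ with $|C_1|=k$ and $|C_2|=\cdots=|C_m|=1$, so $m=n-k+1$. Now apply \eqref{eq:degx}: for $x\in C_1$ we get $\deg_F(x)=2\cdot 0+(k-1)=k-1$ ($k$ such elements); for the unique element $x$ of $C_j$ with $j\ge 2$ we have $|\{z\mid z\prec x\}|=k+(j-2)$ and no element $\sim x$ other than $x$, so $\deg_F(x)=2(k+j-2)$, which as $j$ runs over $2,\dots,m$ takes the values $2k,2k+2,\dots,2(k+m-2)=2(n-1)$. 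Sorting these in nondecreasing order gives exactly \eqref{eq:deg}. This proves the ``only if'' direction (with the observation that $\deg_F$ depends only on $\precsim$, not on the choice of $\pi_1$ or $\pi_2$ on the diagonal blocks).

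For the converse, suppose $F$ is associative and quasitrivial with $\deg_F$ equal to \eqref{eq:deg} for some $k$, and let $\precsim$ be its associated weak ordering. I would argue by contradiction: if $\precsim$ is not quasilinear, there are pairwise distinct $a,b,c$ with $a\prec b\sim c$. Then $[b]_{\sim}=[c]_{\sim}$ is a class that is neither the bottom class nor a singleton, so by \eqref{eq:degx} we have $\deg_F(b)=\deg_F(c)=2|\{z\mid z\prec b\}|+|\{z\mid z\sim b,\ z\ne b\}|$ with the second term $\ge 1$ and $\ge 2$ if the class has size $\ge 3$; in any case $\deg_F(b)=\deg_F(c)$ is \emph{odd} (it is $2r+s$ with $s\ge 1$) and is attained by at least two elements, while it is $\ge 2k-1$ because there are at least $k$ elements $z$ with $z\prec b$ (the bottom class has size $\ge k$ by comparing with the multiplicity pattern of \eqref{eq:deg}). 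But the multiset \eqref{eq:deg} contains the odd value $k-1$ only when $k$ is even and then with multiplicity exactly $k\ge 2$ is the \emph{only} repeated value and it equals $k-1<2k-1$; all other entries $2k,2k+2,\dots,2n-2$ are even and pairwise distinct. Hence no odd value $\ge 2k-1$ can occur, let alone with multiplicity $\ge 2$ --- a contradiction. Therefore $\precsim$ is quasilinear, and by Proposition~\ref{prop:BS} $F$ is bisymmetric.

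The main obstacle is the bookkeeping in the converse: one must carefully extract from the prescribed multiset \eqref{eq:deg} the structural information about $\precsim$ (namely the size of the bottom class and that all other classes are singletons) using only \eqref{eq:degx}, and rule out every non-quasilinear configuration. The cleanest route is the parity-plus-multiplicity argument sketched above --- non-singleton non-bottom classes force a repeated \emph{odd} degree that is too large to be the unique repeated value $k-1$ in \eqref{eq:deg} --- but one should double-check the edge cases $k=1$ (where $k-1=0$ occurs once, corresponding to a neutral element) and small $n$ to be sure the multiset genuinely has no room for the forbidden pattern.
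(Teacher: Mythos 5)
Your proof follows essentially the same route as the paper: both directions reduce, via Lemma~\ref{lemma:bis}(iii), Theorem~\ref{thm:Kim}, Remark~\ref{rem:con}, and Proposition~\ref{prop:BS}, to checking that the associated weak ordering $\precsim$ is quasilinear iff $\deg_F$ has the form \eqref{eq:deg}; the paper's sufficiency step is simply terser, reading quasilinearity directly off the equivalence $x\precsim y\Leftrightarrow\deg_F(x)\leq\deg_F(y)$ together with the fact that the only repeated entry of \eqref{eq:deg} is the smallest one. One claim in your converse is false as stated: $\deg_F(b)=2r+s$ with $s\geq 1$ need not be odd (take a non-bottom class of size $3$, so $s=2$ and the degree is even). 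Fortunately that claim is redundant: your multiplicity argument --- the value $\deg_F(b)=\deg_F(c)$ exceeds $k-1$ and is attained at least twice, while every entry of \eqref{eq:deg} larger than $k-1$ occurs exactly once --- already yields the contradiction, so the proof stands once the parity remark is deleted.
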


\begin{proof}
(Necessity) By Lemma \ref{lemma:bis}(iii), $F$ is associative. By Proposition \ref{prop:BS}, there exists a quasilinear weak ordering $\precsim$ on $X_n$ such that $F$ is of the form \eqref{eq:kimura}. By Remark \ref{rem:con}, we see that \eqref{eq:degx} holds for any $x\in X_n$. Therefore, we obtain the claimed form of the degree sequence of $F$.

(Sufficiency) By Theorem \ref{thm:Kim}, there exists a weak ordering $\precsim$ on $X_n$ such that $F$ is of the form \eqref{eq:kimura}. By Remark \ref{rem:con}, we have that $x \precsim y$ iff $\deg_F(x) \leq \deg_F(y)$. Thus, according to our assumptions on the degree sequence of $F$, we have that $\precsim$ is quasilinear. Finally, using Proposition \ref{prop:BS}, $F$ is bisymmetric.
\end{proof}

The following theorem, which is an immediate consequence of Lemma \ref{lemma:bis}(iii), Theorem \ref{thm:Kim}, and Propositions \ref{prop:jump}, \ref{prop:jim}, \ref{prop:BS}, and \ref{thm:contour}, provides characterizations of the class of bisymmetric and quasitrivial operations.

\begin{theorem}\label{thm:B}
Let $F \colon X^2 \to X$ be an operation. The following assertions are equivalent.
\begin{enumerate}
  \item[(i)] $F$ is bisymmetric and quasitrivial.
  \item[(ii)] $F$ is of the form \eqref{eq:kimura} for some quasilinear weak ordering $\precsim$ on $X$.
  \item[(iii)] $F$ is of the form \eqref{eq:kimura} for some weak ordering $\precsim$ on $X$ and for any linear ordering $\preceq'$ subordinated to $\precsim$, $F$ is $\preceq'$-preserving.
  \item[(iv)] $F$ is of the form \eqref{eq:kimura} for some weak ordering $\precsim$ on $X$ and for any linear ordering $\preceq'$ subordinated to $\precsim$, $F$ has no $\preceq'$-disconnected level set.
\end{enumerate}
If $X=X_n$ for some integer $n \geq 1$, then any of the assertions (i)-(iv) is equivalent to the following one.
\begin{enumerate}
\item[(v)] $F$ is associative, quasitrivial, and satisfies \eqref{eq:deg} for some $k \in \{1,\ldots,n\}$.
\end{enumerate}
\end{theorem}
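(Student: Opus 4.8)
The plan is to establish the cycle of equivalences among (i)--(iv) by chaining the results collected just before the statement, and then to close the loop with (v) in the finite case via Proposition~\ref{thm:contour}. No single step is hard; the only thing requiring a little care is keeping track of the fact that each of (ii), (iii), (iv) already forces $F$ to be quasitrivial, since being of the form \eqref{eq:kimura} makes $F$ associative and quasitrivial by Theorem~\ref{thm:Kim}. This is what allows Propositions~\ref{prop:jump}, \ref{prop:jim}, and \ref{prop:BS} to be invoked.

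First I would prove (i)$\Leftrightarrow$(ii). For the forward direction, Lemma~\ref{lemma:bis}(iii) shows that a bisymmetric and quasitrivial $F$ is associative, so Theorem~\ref{thm:Kim} yields a weak ordering $\precsim$ on $X$ for which $F$ has the form \eqref{eq:kimura}; Proposition~\ref{prop:BS} then forces $\precsim$ to be quasilinear. Conversely, if $F$ has the form \eqref{eq:kimura} for some quasilinear $\precsim$, then $F$ is quasitrivial by Theorem~\ref{thm:Kim} and bisymmetric by Proposition~\ref{prop:BS}, giving (i).

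Next, (ii)$\Leftrightarrow$(iii) is exactly Proposition~\ref{prop:jim} applied to the weak ordering $\precsim$ representing $F$ (which is moreover unique by Remark~\ref{rem:con}): for a given such $\precsim$, quasilinearity of $\precsim$ is equivalent to $F$ being $\preceq'$-preserving for every linear ordering $\preceq'$ subordinated to $\precsim$, so the two existential statements over weak orderings coincide. Then (iii)$\Leftrightarrow$(iv) follows from Proposition~\ref{prop:jump}: since any $F$ of the form \eqref{eq:kimura} is quasitrivial, being $\preceq'$-preserving and having no $\preceq'$-disconnected level set are equivalent for each subordinated linear ordering $\preceq'$.

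Finally, for $X=X_n$ I would obtain (i)$\Leftrightarrow$(v) directly from Proposition~\ref{thm:contour}: a quasitrivial $F$ is bisymmetric iff it is associative and its degree sequence has the form \eqref{eq:deg}, so (i)---which adds quasitriviality to bisymmetry---is equivalent to (v), which adds quasitriviality to associativity together with \eqref{eq:deg}. The only ``obstacle'' here is purely bookkeeping: ensuring the existential quantifiers over weak orderings in (ii)--(iv) are handled consistently, which is immediate once one recalls that the representing weak ordering is unique. I would conclude by noting that the equivalences above, read together, give the full statement.
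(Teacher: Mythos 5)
Your proposal is correct and follows exactly the route the paper intends: the paper states Theorem~\ref{thm:B} as an immediate consequence of Lemma~\ref{lemma:bis}(iii), Theorem~\ref{thm:Kim}, and Propositions~\ref{prop:jump}, \ref{prop:jim}, \ref{prop:BS}, and \ref{thm:contour}, and your chaining of these results (including the observation that the representing weak ordering is unique, so the existential quantifiers in (ii)--(iv) match up) is precisely the intended argument spelled out in detail.
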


\begin{remark}
\begin{enumerate}
\item[(a)] We observe that an alternative characterization of the class of bisymmetric and quasitrivial operations $F\colon X^2 \to X$ was obtained in \cite[Proposition 10.2]{Kep81}.
\item[(b)] We observe that in \cite[Theorem 3.3]{CouDevMar2} the authors proved that an operation $F\colon X_n^2 \to X_n$ is associative, commutative, and quasitrivial iff it is quasitrivial and satisfies $\deg_F = (0,2,\ldots,2(n-1))$. Surprisingly, in Theorem \ref{thm:B}(v), we have obtained a similar result by relaxing commutativity into bisymmetry. Moreover, it provides an easy test to check whether an associative and quasitrivial operation on $X_n$ is bisymmetric. Indeed, the associative and quasitrivial operation $F\colon X_3^2 \to X_3$ whose contour plot is depicted in Figure \ref{fig:kim} is not bisymmetric since $\deg_F = (0,3,3)$ is not of the form given in Theorem \ref{thm:B}(v).
\end{enumerate}
\end{remark}

The rest of this section is devoted to the subclass of bisymmetric, quasitrivial, and $\leq$-preserving operations $F\colon X^2 \to X$ for a fixed linear ordering $\leq$ on $X$. In order to characterize this subclass, we first need to recall the concept of weak single-peakedness.

\begin{definition}{(see \cite{CouDevMar2,DevKiMar17})}\label{def:WBlack}
Let $\leq$ be a linear ordering on $X$ and let $\precsim$ be a weak ordering on $X$. The weak ordering $\precsim$ is said to be \emph{weakly single-peaked with respect to $\leq$} if for any $x,y,z\in X$ such that $y \in \mathrm{conv}_{\leq}(x,z)$, we have $y \prec x$ or $y \prec z$ or $x \sim y \sim z$. If the weak ordering $\precsim$ is a linear ordering, then it is said to be \emph{single-peaked with respect to $\leq$}.
\end{definition}

\begin{remark}\label{def:Black}
Note that single-peakedness was first introduced by Black \cite{Bla48} for linear orderings on finite sets. It is also easy to show by induction that there are exactly $2^{n-1}$ single-peaked linear orderings on $X_n$ w.r.t.\ $\leq_n$ (see, e.g., \cite{BerPer06}).
\end{remark}

\begin{fact}\label{lem:min}
Let $\leq$ be a linear ordering on $X$ and let $\precsim$ be a quasilinear weak ordering on $X$ that is weakly single-peaked w.r.t.\ $\leq$.
If $|\min_{\precsim}X| = 1$, then $\precsim$ is a linear ordering that is single-peaked w.r.t.\ $\leq$.
\end{fact}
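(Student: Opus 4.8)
The plan is to derive the statement in two short steps, combining Fact~\ref{lem:count} with Definition~\ref{def:WBlack}.

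\emph{Step 1 (linearity).} First I would apply the first item of Fact~\ref{lem:count} to $\precsim$: since $\precsim$ is a quasilinear weak ordering with $|\min_{\precsim}X| = 1$, it is automatically a linear ordering on $X$. This is the only place where quasilinearity and the hypothesis on $\min_{\precsim}X$ are used.

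\emph{Step 2 (single-peakedness).} It then remains to see that this linear ordering is single-peaked with respect to $\leq$. By hypothesis $\precsim$ is weakly single-peaked with respect to $\leq$, so for all $x,y,z \in X$ with $y \in \mathrm{conv}_{\leq}(x,z)$ we have $y \prec x$, or $y \prec z$, or $x \sim y \sim z$. Since, by Step~1, $\precsim$ is a linear ordering, the terminology of Definition~\ref{def:WBlack} says precisely that $\precsim$ is single-peaked with respect to $\leq$, and we are done. If one prefers to avoid appealing to the naming convention, one can instead note that $y \in \mathrm{conv}_{\leq}(x,z)$ forces $x,y,z$ to be pairwise distinct, so in the linear ordering $\precsim$ the clause $x \sim y \sim z$ cannot occur and the weak single-peakedness condition collapses to the classical condition $y < x$ or $y < z$.

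I do not expect a genuine obstacle here: the content lies entirely in recognizing that Fact~\ref{lem:count} upgrades $\precsim$ to a linear ordering and that ``single-peaked'' is, by Definition~\ref{def:WBlack}, simply the name of ``weakly single-peaked'' restricted to linear orderings. The only points requiring a little care are invoking the correct item of Fact~\ref{lem:count} and checking that the strict betweenness built into $\mathrm{conv}_{\leq}$ rules out the degenerate alternative $x \sim y \sim z$.
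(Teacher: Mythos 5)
Your proof is correct and matches the intended justification: the paper states this as a \emph{Fact} without proof, and the evident route is exactly yours, namely the first item of Fact~\ref{lem:count} to upgrade $\precsim$ to a linear ordering, followed by the observation that Definition~\ref{def:WBlack} defines ``single-peaked'' as ``weakly single-peaked'' for linear orderings. Your closing remark that strict betweenness in $\mathrm{conv}_{\leq}(x,z)$ rules out the clause $x \sim y \sim z$ is a correct (if optional) sanity check that the condition collapses to classical single-peakedness.
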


The next proposition provides a characterization of the class of associative, quasitrivial, and $\leq$-preserving operations $F\colon X^2 \to X$ for a fixed linear ordering $\leq$ on $X$.

\begin{proposition}{(see \cite[Theorem 4.5]{CouDevMar2})}\label{thm:JL}
Let $\leq$ be a linear ordering on $X$. An operation $F\colon X^2\to X$ is associative, quasitrivial, and $\leq$-preserving iff $F$ is of the form \eqref{eq:kimura} for some weak ordering $\precsim$ on $X$ that is weakly single-peaked w.r.t.\ $\leq$.
\end{proposition}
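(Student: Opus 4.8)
The plan is to peel off the part of the statement already supplied by earlier results and reduce to a purely order-theoretic dichotomy. By Theorem~\ref{thm:Kim}, an operation $F\colon X^2\to X$ is associative and quasitrivial iff it is of the form \eqref{eq:kimura} for some weak ordering $\precsim$ on $X$, and by Remark~\ref{rem:con} that weak ordering is uniquely determined by $F$. So the content of the proposition is exactly: \emph{if $F$ is of the form \eqref{eq:kimura} for the weak ordering $\precsim$, then $F$ is $\leq$-preserving iff $\precsim$ is weakly single-peaked w.r.t.\ $\leq$}. Since $F$ is quasitrivial, Proposition~\ref{prop:jump} replaces ``$F$ is $\leq$-preserving'' by ``$F$ has no $\leq$-disconnected level set'', and Lemma~\ref{lem:dis} further replaces the latter by ``$F$ has neither a horizontal nor a vertical $\leq$-disconnected level set''. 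Thus the whole argument becomes a translation between the combinatorics of $\precsim$ and the presence of such level sets, each direction being a short case analysis in which $F$ is evaluated via \eqref{eq:kimura}.

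For necessity I argue the contrapositive: assuming $\precsim$ is \emph{not} weakly single-peaked w.r.t.\ $\leq$, I produce a horizontal or vertical $\leq$-disconnected level set. Unwinding Definition~\ref{def:WBlack}, there are $x,y,z\in X$ with, say, $x<y<z$, $x\precsim y$, $z\precsim y$, and not $x\sim y\sim z$. I then split into the cases ``$x\prec y$ and $z\prec y$'', ``$x\prec y$ and $z\sim y$'', and ``$x\sim y$ and $z\prec y$'', with a further split in the first case according to whether $x\prec z$, $z\prec x$, or $x\sim z$, and, whenever two of the three elements lie in the same $\sim$-class, according to whether the corresponding diagonal block of \eqref{eq:kimura} carries $\pi_1$ or $\pi_2$. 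In each branch one computes $F$ along a single row or column through one of $x,y,z$ and finds the forbidden pattern at arguments $x<y<z$; for instance, if $x\prec z$ and $z\prec y$ then $F(x,z)=z=F(z,z)\neq y=F(y,z)$, a horizontal $\leq$-disconnected level set. (Equivalently, in most branches one reads off a direct monotonicity violation, e.g.\ $F(x,z)=z>y=F(y,z)$ with $x\leq y$, so this step can also be carried out without the level-set language.)

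For sufficiency, suppose $F$ is of the form \eqref{eq:kimura} for a weak ordering $\precsim$ that is weakly single-peaked w.r.t.\ $\leq$; then $F$ is associative and quasitrivial by Theorem~\ref{thm:Kim}, and it remains to show $F$ is $\leq$-preserving. Assume for contradiction that $F$ has, say, a horizontal $\leq$-disconnected level set: there are $x<y<z$ and $u\in X$ with $F(x,u)=F(z,u)\neq F(y,u)$. Quasitriviality forces the common value $c:=F(x,u)=F(z,u)$ to lie in $\{x,u\}\cap\{z,u\}$, so (as $x\neq z$) either $c=u$ or $u\in\{x,z\}$; moreover $y\neq u$. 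In each case I read off from \eqref{eq:kimura} the position of $x,y,z,u$ in $\precsim$ (and the projection label of any diagonal block containing two of them), and these constraints force $x\precsim y$ and $z\precsim y$ with not $x\sim y\sim z$ --- precisely a failure of weak single-peakedness of $\precsim$ at the triple $x<y<z$, a contradiction. The vertical case is obtained by applying the horizontal case to $F'(a,b):=F(b,a)$, which is again of the form \eqref{eq:kimura} for the same $\precsim$ (only the $\pi_1\leftrightarrow\pi_2$ labels are swapped). Combining the two directions with Lemma~\ref{lem:dis}, Proposition~\ref{prop:jump}, and Theorem~\ref{thm:Kim} yields the equivalence.

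The main obstacle is purely organizational: one must ensure every branch of the two case analyses is covered, and consistently track which diagonal block a given pair lands in and whether that block uses $\pi_1$ or $\pi_2$, together with all the coincidence possibilities ($u$ equal to one of $x,y,z$, or $x\sim z$, etc.) in the disconnected-level-set configurations. No single computation is hard --- each is a one-line evaluation of $F$ through \eqref{eq:kimura} --- but the bookkeeping must be tight enough that in the necessity direction every branch genuinely exhibits a level set and in the sufficiency direction every branch genuinely contradicts weak single-peakedness.
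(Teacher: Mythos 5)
Your proposal is correct, but note that the paper itself gives no proof of Proposition~\ref{thm:JL}: it is imported verbatim from \cite[Theorem~4.5]{CouDevMar2}, so there is no in-paper argument to match yours against. Judged on its own, your reduction is sound: Theorem~\ref{thm:Kim} strips off ``associative and quasitrivial'', Remark~\ref{rem:con} pins down $\precsim$, and Proposition~\ref{prop:jump} together with Lemma~\ref{lem:dis} legitimately converts ``$\leq$-preserving'' into ``no horizontal or vertical $\leq$-disconnected level set'', which is exactly the right currency for a case analysis through \eqref{eq:kimura}. I checked the branches you leave implicit. In the necessity direction the negation of Definition~\ref{def:WBlack} does give $x<y<z$ with $x\precsim y$, $z\precsim y$, and not $x\sim y\sim z$, and each of your three cases (with the $x\prec z$ / $z\prec x$ / $x\sim z$ and $\pi_1$/$\pi_2$ refinements) does produce the forbidden pattern along a single row or column --- e.g.\ for $z\prec x\prec y$ one gets $F(x,x)=x=F(z,x)\neq y=F(y,x)$, and for $x\sim z\prec y$ with block $\pi_1$ one gets $F(z,x)=z=F(z,z)\neq y=F(z,y)$. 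In the sufficiency direction your observation that the common value must equal $u$ is right (the clause ``or $u\in\{x,z\}$'' is redundant, since in that case the common value is again $u$), and the two resulting cases ($u\prec y$, or $y\sim u$ with block $\pi_1$) do force $x\precsim y$, $z\precsim y$ and rule out $x\sim y\sim z$, contradicting weak single-peakedness; the coordinate swap for the vertical case is harmless since it only exchanges the $\pi_1$/$\pi_2$ labels in \eqref{eq:kimura}. The only caveat is presentational: the argument is a proof schema rather than a written-out proof, and its value depends on every branch actually being checked --- but each one does close, so the gap is one of exposition, not of substance.
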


Using Lemma \ref{lemma:bis}(iii), Theorem \ref{thm:B}, and Proposition \ref{thm:JL} we can easily derive the following result.

\begin{proposition}\label{thm:bs}
Let $\leq$ be a linear ordering on $X$. An operation $F \colon X^2 \to X$ is bisymmetric, quasitrivial, and $\leq$-preserving iff $F$ is of the form \eqref{eq:kimura} for some quasilinear weak ordering $\precsim$ on $X$ that is weakly single-peaked w.r.t.\ $\leq$.
\end{proposition}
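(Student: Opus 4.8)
The plan is to obtain Proposition~\ref{thm:bs} by intersecting the characterizations already established, exactly as the preamble to the statement suggests. We want to show that $F$ is bisymmetric, quasitrivial, and $\leq$-preserving if and only if $F$ is of the form \eqref{eq:kimura} for some quasilinear weak ordering $\precsim$ on $X$ that is weakly single-peaked w.r.t.\ $\leq$. The key observation is that both the ``bisymmetric + quasitrivial'' package and the ``associative + quasitrivial + $\leq$-preserving'' package are expressed in terms of the \emph{same} canonical weak ordering $\precsim$ attached to $F$ (the one given by $x \precsim y$ iff $F(x,y) = y$ or $F(y,x) = y$, cf.\ Remark~\ref{rem:con}), so the two sets of conditions on $\precsim$ can simply be conjoined.

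For the necessity direction, suppose $F$ is bisymmetric, quasitrivial, and $\leq$-preserving. By Lemma~\ref{lemma:bis}(iii), $F$ is associative, so $F$ is associative, quasitrivial, and $\leq$-preserving; Proposition~\ref{thm:JL} then yields a weak ordering $\precsim$ on $X$, weakly single-peaked w.r.t.\ $\leq$, such that $F$ is of the form \eqref{eq:kimura}. By the uniqueness statement in Remark~\ref{rem:con}, this $\precsim$ is \emph{the} weak ordering representing $F$ in the form \eqref{eq:kimura}. Now $F$ is also bisymmetric and quasitrivial, hence by Theorem~\ref{thm:B} (equivalence of (i) and (ii)) it is of the form \eqref{eq:kimura} for some quasilinear weak ordering; by uniqueness again, that weak ordering is the same $\precsim$. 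Thus $\precsim$ is simultaneously quasilinear and weakly single-peaked w.r.t.\ $\leq$, as desired. For sufficiency, suppose $F$ is of the form \eqref{eq:kimura} for a weak ordering $\precsim$ that is both quasilinear and weakly single-peaked w.r.t.\ $\leq$. Quasilinearity of $\precsim$ together with Proposition~\ref{prop:BS} gives that $F$ is bisymmetric; $F$ is quasitrivial since any operation of the form \eqref{eq:kimura} is quasitrivial (each piece is a projection or a $\max_{\precsim}$, which always returns an input); and weak single-peakedness of $\precsim$ together with Proposition~\ref{thm:JL} gives that $F$ is $\leq$-preserving.

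There is essentially no obstacle here: the proposition is a formal corollary, and the only point requiring a little care is the appeal to uniqueness of the representing weak ordering (Remark~\ref{rem:con}), which is what guarantees that the $\precsim$ produced by Proposition~\ref{thm:JL} and the $\precsim$ produced by Theorem~\ref{thm:B} coincide, so that ``quasilinear'' and ``weakly single-peaked'' can be asserted of one and the same weak ordering. Accordingly I would keep the proof to a few lines, citing Lemma~\ref{lemma:bis}(iii), Theorem~\ref{thm:B}, Proposition~\ref{thm:JL}, and Remark~\ref{rem:con} for necessity, and Propositions~\ref{prop:BS} and \ref{thm:JL} for sufficiency, without reproving anything.
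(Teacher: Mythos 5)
Your proposal is correct and follows exactly the route the paper indicates: the paper states that Proposition~\ref{thm:bs} is derived from Lemma~\ref{lemma:bis}(iii), Theorem~\ref{thm:B}, and Proposition~\ref{thm:JL}, and your argument fills in the details of that derivation, with the appeal to the uniqueness of the representing weak ordering (Remark~\ref{rem:con}) being precisely the glue needed to conjoin quasilinearity and weak single-peakedness on the same $\precsim$.
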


When $X=X_n$ for some integer $n\geq 1$, we provide in Proposition \ref{thm:contour2} an additional characterization of the class of bisymmetric, quasitrivial, and $\leq_n$-preserving operations. We first consider two preliminary results.

\begin{proposition}\label{prop:pi}
An operation $F \colon X_n^2 \to X_n$ is quasitrivial, $\leq_n$-preserving, and satisfies $\deg_F = (n-1,\ldots,n-1)$ iff $F=\pi_1$ or $F=\pi_2$.
\end{proposition}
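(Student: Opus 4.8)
The plan is to dispatch sufficiency by a direct check and to concentrate on necessity. If $F=\pi_1$, then $F$ is visibly quasitrivial and $\leq_n$-preserving, and for each $z\in X_n$ the only points of $X_n^2\setminus\{(z,z)\}$ on which $F$ takes the value $z=F(z,z)$ are the points $(z,y)$ with $y\neq z$; hence $\deg_F(z)=n-1$, so $\deg_F=(n-1,\dots,n-1)$. The case $F=\pi_2$ is symmetric, which proves the ``if'' part.

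For necessity, I would assume $F$ quasitrivial, $\leq_n$-preserving, with $\deg_F=(n-1,\dots,n-1)$; the case $n=1$ is trivial, so take $n\geq2$. The first move is to observe that quasitriviality gives $F(1,n),F(n,1)\in\{1,n\}$, and to split on these corner values. If $F(1,n)=F(n,1)=n$, then $\leq_n$-preservation propagates the value $n$ along the last row and the last column (e.g.\ $(1,n)\leq_n(x,n)$ forces $F(x,n)=n$), so $n$ is an annihilator of $F$; then $(n,n)$ is $F$-connected to all $2(n-1)$ off-diagonal points of its row and column, giving $\deg_F(n)=2(n-1)$ by Proposition~\ref{prop:an}, which contradicts $\deg_F(n)=n-1$ since $n\geq2$. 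The case $F(1,n)=F(n,1)=1$ is symmetric, with $1$ an annihilator. Hence $(F(1,n),F(n,1))$ equals $(1,n)$ or $(n,1)$; since $(x,y)\mapsto F(y,x)$ preserves all three hypotheses, swaps these two cases, and exchanges $\pi_1$ with $\pi_2$, it suffices to treat $F(1,n)=1$, $F(n,1)=n$ and conclude $F=\pi_1$.

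In that case I would argue as follows. From $F(1,n)=1$ and $(1,j)\leq_n(1,n)$ we get $F(1,j)=1$ for all $j$, so the first row is constantly $1$; this already exhibits $n-1$ points of $X_n^2\setminus\{(1,1)\}$ on which $F$ equals $1=F(1,1)$, and since $\deg_F(1)=n-1$ there can be no others, so $F(i,1)\neq1$ for $i\neq1$, i.e.\ (quasitriviality) $F(i,1)=i$ for all $i$. Dually, $F(n,1)=n$ makes the last row constantly $n$, and $\deg_F(n)=n-1$ then forces $F(i,n)=i$ for all $i$. The argument closes with a sandwich: for arbitrary $i,j\in X_n$ we have $(i,1)\leq_n(i,j)\leq_n(i,n)$, so $\leq_n$-preservation yields $i=F(i,1)\leq_n F(i,j)\leq_n F(i,n)=i$, whence $F(i,j)=i$ and $F=\pi_1$.

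I expect the only delicate point to be the interaction between the degree hypothesis and $\leq_n$-preservation. On its own, quasitriviality plus $\leq_n$-preservation allows ``$\min/\max$-type'' operations; the hypothesis $\deg_F=(n-1,\dots,n-1)$ is exactly what rules these out, since their extremal element would be an annihilator of degree $2(n-1)$, and it is also what upgrades a boundary column from merely having convex level sets to being a genuine projection once the opposite boundary row has been forced constant. Beyond that, everything is routine monotonicity together with counting of $F$-connected points.
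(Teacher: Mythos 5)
Your proof is correct and follows essentially the same route as the paper's: fix a corner value by quasitriviality, propagate it along a boundary line by $\leq_n$-preservation, use the hypothesis $\deg_F(z)=n-1$ to force the perpendicular boundary line to take the other projection's values, and finish with a monotonicity sandwich. The only difference is organizational: you rule out the two ``annihilator'' corner configurations as a separate preliminary case via Proposition~\ref{prop:an}, whereas the paper conditions only on $F(1,n)$ and lets the degree count determine $F(n,1)$ automatically.
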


\begin{proof}
(Necessity) Since $F$ is quasitrivial we know that $F(1,n) \in \{1,n\}$. Suppose that $F(1,n) = n = F(n,n)$ (the other case is similar). Since $F$ is $\leq_n$-preserving, we have $F(x,n)=n$ for all $x \in X_n$. Since $\deg_F(n) = n-1$, it follows that $F(n,y) = y$ for all $y \in X_n$. In particular, we have $F(n,1) = 1 = F(1,1)$, and by $\leq_n$-preservation we obtain $F(x,1) = 1$ for all $x \in X_n$. Finally, since $\deg_F(1) = n-1$, it follows that $F(1,y) = y$ for all $y \in X_n$. Thus, since $F$ is $\leq_n$-preserving, we have
$$
y = F(1,y) \leq_n F(x,y) \leq_n F(n,y) = y, \quad x,y \in X_n,
$$
which shows that $F = \pi_2$.

(Sufficiency) Obvious.
\end{proof}

\begin{remark}
We observe that Proposition \ref{prop:pi} no longer holds if quasitriviality is relaxed into idempotency. Indeed, the operation $F\colon X_3^2 \to X_3$ whose contour plot is depicted in Figure \ref{fig:2} is idempotent, $\leq_3$-preserving, and satisfies $\deg_F=(2,2,2)$ but it is neither $\pi_1$ nor $\pi_2$. We also observe that Proposition \ref{prop:pi} no longer holds if we omit $\leq_n$-preservation. Indeed, the operation $F\colon X_3^2 \to X_3$ whose contour plot is depicted in Figure \ref{fig:3} is quasitrivial and satisfies $\deg_F=(2,2,2)$ but it is neither $\pi_1$ nor $\pi_2$.
\end{remark}

\setlength{\unitlength}{5.0ex}

\begin{figure}[htbp]
\begin{minipage}{0.49\textwidth}
\begin{center}
\begin{scriptsize}
\begin{small}
\begin{picture}(3,3)
\multiput(0.5,0.5)(0,1){3}{\multiput(0,0)(1,0){3}{\circle*{0.2}}}%
\put(0.75,0.75){\makebox(0,0){$1$}}
\put(1.75,1.75){\makebox(0,0){$2$}}
\put(2.75,2.75){\makebox(0,0){$3$}}
\drawline[1](2.5,1.5)(2.5,2.5)(1.5,2.5)
\drawline[1](1.5,0.5)(0.5,0.5)(0.5,1.5)
\drawline[1](2.5,0.5)(0.5,2.5)
\end{picture}
\end{small}
\end{scriptsize}
\caption{An idempotent and $\leq_3$-preserving operation}
\label{fig:2}
\end{center}
\end{minipage}
\hfill
\begin{minipage}{0.49\textwidth}
\begin{center}
\begin{scriptsize}
\begin{small}
\begin{picture}(3,3)
\multiput(0.5,0.5)(0,1){3}{\multiput(0,0)(1,0){3}{\circle*{0.2}}}%
\drawline[1](2.5,1.5)(2.5,2.5)(1.5,2.5)
\drawline[1](0.5,1.5)(1.5,1.5)(1.5,0.5)
\put(1.5,0.5){\oval(2,0.5)[b]}\put(0.5,1.5){\oval(0.5,2)[l]}
\put(0.75,0.75){\makebox(0,0){$1$}}
\put(1.75,1.75){\makebox(0,0){$2$}}
\put(2.75,2.75){\makebox(0,0){$3$}}
\end{picture}
\end{small}
\end{scriptsize}
\caption{A quasitrivial operation that is not $\leq_3$-preserving}
\label{fig:3}
\end{center}
\end{minipage}
\end{figure}

\begin{lemma}\label{lem:an}
Let $F\colon X_n^2 \to X_n$ be a quasitrivial and $\leq_n$-preserving operation and let $a \in X_n$. If $a$ is an annihilator of $F$, then $a \in \{1,n\}$.
\end{lemma}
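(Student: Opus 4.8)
The plan is to argue by contradiction using the $\leq_n$-preservation together with quasitriviality. Suppose $a$ is an annihilator of $F$ but $a \notin \{1,n\}$, so that $1 <_n a <_n n$. Since $a$ is an annihilator we have $F(1,a) = a$ and $F(a,n) = a$. The idea is to push these equalities against the monotonicity of $F$ and the fact that $F(x,y) \in \{x,y\}$ always.

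First I would exploit $\leq_n$-preservation with the first equality: from $F(1,a) = a$ and $a <_n n$ we get, taking the second argument up to $n$, that $a = F(1,a) \leq_n F(1,n)$, and since $F$ is quasitrivial $F(1,n) \in \{1,n\}$; as $a >_n 1$ this forces $F(1,n) = n$. Similarly, from $F(a,n) = a$ and $1 <_n a$, decreasing the first argument to $1$ gives $F(1,n) \leq_n F(a,n) = a <_n n$, hence $F(1,n) \neq n$. This is the desired contradiction: $F(1,n)$ cannot simultaneously equal $n$ and be strictly below $n$.

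The one point needing a little care is the direction of the monotonicity inequalities, i.e. making sure I apply $\leq_n$-preservation to the correct pairs $(x,y) \leq (x',y')$ (coordinatewise). Concretely: $(1,a) \leq_n (1,n)$ because $1 \leq_n 1$ and $a \leq_n n$, which yields $F(1,a) \leq_n F(1,n)$; and $(1,n) \leq_n (a,n)$ because $1 \leq_n a$ and $n \leq_n n$, which yields $F(1,n) \leq_n F(a,n)$. Both are legitimate instances of the definition of $\leq_n$-preservation, so no obstacle arises there. The main (and essentially only) obstacle is simply keeping the two chains of inequalities straight; once that is done the contradiction is immediate and the proof is complete. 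I expect this argument to be only a few lines.
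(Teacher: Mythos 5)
Your proof is correct and uses essentially the same ingredients as the paper's: quasitriviality to force $F(1,n)\in\{1,n\}$, and $\leq_n$-preservation applied along the row and column through $a$. The only cosmetic difference is that the paper splits into the two cases $F(1,n)=1$ and $F(1,n)=n$ and refutes each, whereas you derive $F(1,n)=n$ and $F(1,n)\neq n$ simultaneously; both arguments are equally valid.
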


\begin{proof}
We proceed by contradiction. Suppose that $a \in X_n \setminus \{1,n\}$. Since $F$ is quasitrivial, we have $F(1,n) \in \{1,n\}$. Suppose that $F(1,n) = 1 = F(1,1)$ (the other case is similar). Then
$$
1 = F(1,1) \leq_n F(1,a) \leq_n F(1,n) = 1,
$$
and hence $F(1,a)=1$ a contradiction.
\end{proof}

\begin{proposition}\label{thm:contour2}
Let $F \colon X_n^2 \to X_n$ be quasitrivial and $\leq_n$-preserving. Then $F$ is bisymmetric iff it satisfies \eqref{eq:deg} for some $k \in \{1,\ldots,n\}$.
\end{proposition}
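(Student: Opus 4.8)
The plan is as follows. Necessity is immediate and does not even use $\leq_n$-preservation: if $F$ is bisymmetric and quasitrivial, then by Proposition~\ref{thm:contour} it is associative and satisfies \eqref{eq:deg} for some $k\in\{1,\ldots,n\}$, hence in particular \eqref{eq:deg} holds.

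For sufficiency, suppose $F$ is quasitrivial, $\leq_n$-preserving, and satisfies \eqref{eq:deg} for some $k$, and proceed by induction on $n$. If $k=n$, then $\deg_F=(n-1,\ldots,n-1)$ and Proposition~\ref{prop:pi} gives $F=\pi_1$ or $F=\pi_2$, both of which are bisymmetric; this also settles the base case $n=1$. Now assume $1\leq k<n$, so $n\geq 2$. The last entry of $\deg_F$ being $2n-2$, there is $a\in X_n$ with $\deg_F(a)=2n-2$, so by Proposition~\ref{prop:an} the element $a$ is the annihilator of $F$, and by Lemma~\ref{lem:an} we have $a\in\{1,n\}$; assume w.l.o.g.\ that $a=n$, the case $a=1$ being symmetric. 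Consider the restriction $F'=F|_{X_{n-1}^2}$, where $X_{n-1}=\{1,\ldots,n-1\}$ carries the restriction of $\leq_n$, namely $\leq_{n-1}$; by quasitriviality of $F$ this is a well-defined operation on $X_{n-1}$, and it is plainly quasitrivial and $\leq_{n-1}$-preserving. Since $a=n$ is an annihilator, for every $z\in X_{n-1}$ the point $(z,z)$ (which has value $z\neq n$) is $F$-connected to no point of $X_n^2$ involving $n$, and the remaining $F$-connections of $(z,z)$ lie inside $X_{n-1}^2$; hence $\deg_{F'}(z)=\deg_F(z)$. Therefore $\deg_{F'}$ is obtained from $\deg_F$ by deleting the entry $2n-2$, i.e.\ $\deg_{F'}=(\underbrace{k-1,\ldots,k-1}_{k\text{ times}},2k,2k+2,\ldots,2(n-1)-2)$, which is again of the form \eqref{eq:deg} with $k\leq n-1$. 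By the induction hypothesis, $F'$ is bisymmetric.

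It then remains to lift bisymmetry from $F'$ to $F$. Let $x,y,u,v\in X_n$. If $x,y,u,v\in X_{n-1}$, then by quasitriviality every intermediate value appearing in $F(F(x,y),F(u,v))$ and in $F(F(x,u),F(y,v))$ lies in $X_{n-1}$, so the bisymmetry identity for these arguments is inherited from $F'$. Otherwise at least one of $x,y,u,v$ equals $a$; since $a$ is an annihilator, one checks directly in each of the four cases ($x=a$, $y=a$, $u=a$, $v=a$) that both sides of the identity reduce to $a$. Hence $F$ is bisymmetric, which completes the induction and the proof.

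I expect the main obstacle to be the bookkeeping in the inductive step: namely, verifying that removing the annihilator leaves the degrees of the remaining elements untouched and that the truncated sequence is once more of the form \eqref{eq:deg}. This is precisely where the $\leq_n$-preservation hypothesis is indispensable --- without Lemma~\ref{lem:an} the annihilator need not be an endpoint of $X_n$, so the restriction would not be an order-isomorphic copy of $X_{n-1}$ and the reduction would fail. The lifting step, although it is the one genuinely new ingredient (as opposed to Proposition~\ref{thm:contour}), is a routine case verification once the annihilator has been located.
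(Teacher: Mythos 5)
Your proposal is correct and follows essentially the same route as the paper's proof: induction on $n$, dispatching the case $k=n$ via Proposition~\ref{prop:pi}, locating the annihilator through Proposition~\ref{prop:an} and Lemma~\ref{lem:an}, restricting to the remaining $n-1$ elements, and invoking the induction hypothesis. The only difference is that you spell out the two steps the paper leaves implicit (that the restricted degree sequence is again of the form \eqref{eq:deg}, and the case analysis lifting bisymmetry back from $F'$ to $F$), and both of these verifications are carried out correctly.
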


\begin{proof}
(Necessity) This follows from Proposition \ref{thm:contour}.

(Sufficiency) We proceed by induction on $n$. The result clearly holds for $n=1$. Suppose that it holds for some $n \geq 1$ and let us show that it still holds for $n+1$. Assume that $F\colon X_{n+1}^2 \to X_{n+1}$ is quasitrivial, $\leq_{n+1}$-preserving, and satisfies
$$\deg_F = (\underbrace{k-1,\ldots,k-1}_{\textstyle{k~ \text{times}}},2k,2k+2,\ldots,2n)$$
for some $k \in \{1,\ldots,n+1\}$. If $k=n+1$ then by Proposition \ref{prop:pi} we have that $F=\pi_1$ or $F=\pi_2$ and hence $F$ is clearly bisymmetric. Otherwise, if $k \in \{1,\ldots,n\}$ then, by the form of the degree sequence of $F$, there exists an element $a \in X_{n+1}$ such that $\deg_F(a)=2n$. Using Proposition \ref{prop:an} we have that $a$ is an annihilator of $F$. Moreover, by Lemma \ref{lem:an}, we have $a \in \{1,n+1\}$. Suppose that $a=n+1$ (the other case is similar). Then, $F'=F|_{X_{n}^2}$ is clearly quasitrivial, $\leq_{n}$-preserving, and satisfies \eqref{eq:deg}. Thus, by induction hypothesis, $F'$ is bisymmetric. Since $a=n+1$ is the annihilator of $F$, we necessarily have that $F$ is bisymmetric.
\end{proof}

The following theorem, which is an immediate consequence of Theorem \ref{thm:B} and Propositions \ref{thm:bs} and \ref{thm:contour2}, provides characterizations of the class of bisymmetric, quasitrivial, and order-preserving operations.

\begin{theorem}\label{thm:BND}
Let $\leq$ be a linear ordering on $X$ and let $F \colon X^2 \to X$ be an operation. The following assertions are equivalent.
\begin{enumerate}
  \item[(i)] $F$ is bisymmetric, quasitrivial, and $\leq$-preserving.
  \item[(ii)] $F$ is of the form \eqref{eq:kimura} for some quasilinear weak ordering $\precsim$ on $X$ that is weakly single-peaked w.r.t.\ $\leq$.
  \item[(iii)] $F$ is of the form \eqref{eq:kimura} for some weak ordering $\precsim$ on $X$ that is weakly single-peaked w.r.t.\ $\leq$ and for any linear ordering $\preceq'$ subordinated to $\precsim$, $F$ is $\preceq'$-preserving.
  \item[(iv)] $F$ is of the form \eqref{eq:kimura} for some weak ordering $\precsim$ on $X$ that is weakly single-peaked w.r.t.\ $\leq$ and for any linear ordering $\preceq'$ subordinated to $\precsim$, $F$ has no $\preceq'$-disconnected level set.
\end{enumerate}
If $(X,\leq) = (X_n,\leq_n)$ for some integer $n \geq 1$, then any of the assertions (i)-(iv) is equivalent to the following one.
\begin{enumerate}
\item[(v)] $F$ is quasitrivial, $\leq_n$-preserving, and satisfies \eqref{eq:deg} for some $k \in \{1,\ldots,n\}$.
\end{enumerate}
\end{theorem}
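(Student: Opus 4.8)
The plan is to assemble the statement from the characterizations already established, so the argument is short. First I would prove (i)~$\Leftrightarrow$~(ii): this is exactly Proposition~\ref{thm:bs} (which itself was obtained from Lemma~\ref{lemma:bis}(iii), Theorem~\ref{thm:B}, and Proposition~\ref{thm:JL}), so nothing new is needed here.

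Next I would handle (ii)~$\Leftrightarrow$~(iii)~$\Leftrightarrow$~(iv). The key observation is that, by Remark~\ref{rem:con}, an operation of the form \eqref{eq:kimura} determines the witnessing weak ordering $\precsim$ uniquely; hence the clause ``$\precsim$ is weakly single-peaked w.r.t.\ $\leq$'' is an unambiguous property of $F$ alone and may be conjoined to each of the equivalent assertions (ii)--(iv) of Theorem~\ref{thm:B} without disturbing their equivalence. Concretely, Theorem~\ref{thm:B} gives, for any $F$ of the form \eqref{eq:kimura} with associated weak ordering $\precsim$, the chain of equivalences ``$\precsim$ is quasilinear'' $\Leftrightarrow$ ``$F$ is $\preceq'$-preserving for every linear ordering $\preceq'$ subordinated to $\precsim$'' $\Leftrightarrow$ ``$F$ has no $\preceq'$-disconnected level set for every such $\preceq'$''; intersecting each of these three conditions with ``$\precsim$ weakly single-peaked w.r.t.\ $\leq$'' yields precisely (ii), (iii), (iv) respectively, and the equivalences are preserved.

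Finally, for the finite case I would prove (i)~$\Leftrightarrow$~(v) directly from Proposition~\ref{thm:contour2}: for $F\colon X_n^2\to X_n$ that is quasitrivial and $\leq_n$-preserving, bisymmetry is equivalent to the degree-sequence condition \eqref{eq:deg} for some $k\in\{1,\ldots,n\}$, which says exactly that assertions (i) and (v) describe the same operations (using also that, by Proposition~\ref{thm:bs} and Theorem~\ref{thm:B}, a bisymmetric and $\leq_n$-preserving operation of the form \eqref{eq:kimura} is automatically quasitrivial).

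I do not anticipate a substantive obstacle, since all the mathematical content has been isolated in the preceding propositions; the one point that deserves a word of care is the uniqueness of $\precsim$ from Remark~\ref{rem:con} invoked above, which is what legitimizes appending ``weak single-peakedness of $\precsim$'' uniformly across (ii)--(iv) and reading it back as a condition on $F$.
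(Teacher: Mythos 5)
Your proposal is correct and follows essentially the same route as the paper, which presents Theorem~\ref{thm:BND} as an immediate consequence of Theorem~\ref{thm:B} and Propositions~\ref{thm:bs} and~\ref{thm:contour2}. Your explicit appeal to the uniqueness of the witnessing weak ordering $\precsim$ (Remark~\ref{rem:con}) to justify conjoining weak single-peakedness uniformly across assertions (ii)--(iv) is a detail the paper leaves implicit, and it is exactly the right point to make explicit.
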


\begin{remark}
\begin{enumerate}
\item[(a)] We observe that in \cite[Theorem 3.7]{CouDevMar2} the authors proved that an operation $F\colon X_n^2 \to X_n$ is associative, commutative, quasitrivial, and $\leq_n$-preserving iff it is quasitrivial, $\leq_n$-preserving, and satisfies
    $$\deg_F =(0,2,\ldots,2(n-1)).$$
    Surprisingly, in Theorem \ref{thm:BND}(v), we have obtained a similar result by relaxing commutativity into bisymmetry. Moreover, it provides an easy test to check whether a quasitrivial and $\leq_n$-preserving operation on $X_n$ is bisymmetric. Indeed, the quasitrivial F and $\leq_3$-preserving operation $F\colon X_3^2 \to X_3$ whose contour plot is depicted in Figure \ref{fig:kim} (left) is not bisymmetric since $\deg_F = (0,3,3)$ is not of the form given in Theorem \ref{thm:BND}(v). It is important to note that in this test, associativity of the given operation need not be checked.
\item[(b)] Recall that an operation $F\colon X^2 \to X$ is said to be \emph{autodistributive} \cite[Section 6.5]{Acz4} if it satisfies the following functional equations
    $$
    F(x,F(y,z))=F(F(x,y),F(x,z)), \qquad x,y,z \in X,
    $$
    and
    $$
    F(F(x,y),z)=F(F(x,z),F(y,z)), \qquad x,y,z \in X.
    $$
    We observe that, under quasitriviality, bisymmetry is equivalent to autodistributivity. Indeed, it is known \cite[Corollary 7.7]{Jez78} that any autodistributive and quasitrivial operation is bisymmetric. Conversely, it was observed in \cite[p.\ 39]{GMRE09} that any bisymmetric and idempotent operation is autodistributive. Thus, in Theorems \ref{thm:B}(i) and \ref{thm:BND}(i) we can replace bisymmetry with autodistributivity.
\end{enumerate}
\end{remark}


\section{Enumerations of bisymmetric and quasitrivial operations}

In \cite[Section 4]{CouDevMar2}, the authors enumerated the class of associative and quasitrivial operations $F\colon X_n^2 \to X_n$ as well as some subclasses obtained by considering commutativity and order-preservation. Some of these computations gave rise to previously unknown integer sequences, which were then posted in OEIS (for instance A292932 and A293005). In the same spirit, this section is devoted to the enumeration of the class of bisymmetric and quasitrivial operations $F \colon X_n^2 \to X_n$ as well as some of its subclasses. The integer sequences that emerge from our investigation are also now posted in OEIS (see, e.g., A296943).

We also consider either the (ordinary) generating function (GF) or the exponential generating function (EGF) of a given sequence $(s_n)_{n\geq 0}$. Recall that when these functions exist, they are respectively defined by the power series
$$
S(z) ~=~ \sum_{n\geq 0}s_n{\,}z^n\quad\text{and}\quad \hat S(z) ~=~ \sum_{n\geq 0}s_n{\,}\frac{z^n}{n!}{\,}.
$$

For any integer $n \geq 0$ we denote by $p(n)$ the number of weak orderings on $X_n$ that are quasilinear. We also denote by $p_e(n)$ (resp.\ $p_a(n)$) the number of weak orderings $\precsim$ on $X_n$ that are quasilinear and for which $X_n$ has exactly one minimal element (resp.\ exactly one maximal element) for $\precsim$. By convention, we set $p(0)=p_e(0)=p_a(0)=0$. Clearly, $p_e(n)$ is the number of linear orderings on $X_n$, namely $p_e(n)=n!$ for $n \geq 1$. Proposition \ref{prop:uw1} below provides explicit formulas for $p(n)$ and $p_a(n)$. The first few values of these sequences are shown in Table \ref{tab:p}.\footnote{Note that the sequence A000142 differs from $(p_e(n))_{n\geq 0}$ only at $n=0$.}

\begin{proposition}\label{prop:uw1}
The sequence $(p(n))_{n\geq 0}$ satisfies the linear recurrence equation
$$
p(n+1) - (n+1)p(n) = 1, \qquad n \geq 0,
$$
with $p(0)=0$, and we have the closed-form expression
$$
p(n) = n!\sum_{k=1}^{n}\frac{1}{k!}, \qquad n \geq 1.
$$
Moreover, its EGF is given by $\hat P(z) = (e^z-1)/(1-z)$. Furthermore, for any integer $n \geq 2$ we have $p_a(n)=p(n)-1$, 
with $p_a(0)=0$ and $p_a(1)=1$.
\end{proposition}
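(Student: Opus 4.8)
The plan is to enumerate the quasilinear weak orderings on $X_n$ directly, read off the closed form, derive the recurrence and the EGF from it, and then obtain $p_a(n)$ by discarding a single degenerate ordering.

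First I would record the structure of a quasilinear weak ordering $\precsim$ on $X_n$. As noted just after Definition~\ref{def:quasi}, quasilinearity means that every $\sim$-equivalence class other than the bottom one is a singleton; since $<$ linearly orders the finite nonempty quotient $X_n/\sim$, such a $\precsim$ is determined precisely by choosing a nonempty subset $M\subseteq X_n$ to serve as $\min_{\precsim}X_n$ together with a linear ordering of the $n-|M|$ remaining elements, which then form the other ($\sim$-singleton) classes, stacked above $M$ in that order. Conversely every such choice yields a quasilinear weak ordering, and $M$ together with the restriction of $<$ to $X_n\setminus M$ recovers the choice, so the correspondence is a bijection. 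Summing over $m=|M|$ then gives
$$
p(n) ~=~ \sum_{m=1}^{n}\binom{n}{m}(n-m)! ~=~ \sum_{m=1}^{n}\frac{n!}{m!} ~=~ n!\sum_{k=1}^{n}\frac{1}{k!},\qquad n\geq 1,
$$
which is the asserted closed form (with $p(0)=0$ by convention). Here $m=1$ accounts for the $n!$ linear orderings and $m=n$ for the unique ordering under which all elements of $X_n$ are equivalent.

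The recurrence follows by splitting off the $k=n+1$ term: $p(n+1)/(n+1)!=p(n)/n!+1/(n+1)!$, i.e.\ $p(n+1)=(n+1)\,p(n)+1$ for $n\geq 1$, and for $n=0$ it reduces to $p(1)=p(0)+1=1$, which holds since $p(0)=0$; hence the recurrence is valid for all $n\geq 0$. For the EGF, since $p(n)/n!=\sum_{k=1}^{n}1/k!$ for every $n\geq 0$ (the case $n=0$ reading $0=0$), the series $\hat P(z)=\sum_{n\geq 0}(p(n)/n!)\,z^n$ is the generating function of the partial sums of the sequence $(1/k!)_{k\geq 1}$, so $\hat P(z)=\frac{1}{1-z}\sum_{k\geq 1}\frac{z^{k}}{k!}=\frac{e^{z}-1}{1-z}$. (Alternatively, multiplying the recurrence by $z^{n}/n!$ and summing turns it into the linear ODE $(1-z)\hat P'(z)=\hat P(z)+e^{z}$ with $\hat P(0)=0$, whose unique solution is $(e^{z}-1)/(1-z)$.)

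For $p_a$, observe that in the above description $\max_{\precsim}X_n$ is the topmost class: when $M\subsetneq X_n$ this is a singleton, so $X_n$ has exactly one maximal element; when $M=X_n$ we instead have $\max_{\precsim}X_n=X_n$, which has more than one element precisely when $n\geq 2$. Since there is exactly one quasilinear weak ordering with $M=X_n$, we obtain $p_a(n)=p(n)-1$ for $n\geq 2$. For $n=1$ that same ordering has a unique maximal element, so $p_a(1)=1$, and $p_a(0)=0$ by convention. I expect the step requiring the most care to be the structural description and its boundary cases ($|M|=1$ and $|M|=n$), together with the attendant need to adjust the general formulas at $n=0$ and $n=1$; the generating-function computation and the $p_a$ count are then routine.
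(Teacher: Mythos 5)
Your proposal is correct and follows essentially the same route as the paper: the paper counts quasilinear weak orderings via the multinomial coefficient $\binom{n}{k,1,\ldots,1}=n!/k!$ (choosing the bottom class of size $k$ and linearly ordering the remaining singletons), which is exactly your decomposition by $M=\min_{\precsim}X_n$, and it likewise obtains $p_a(n)=p(n)-1$ by excluding the single all-equivalent ordering via Fact~\ref{lem:count}. You merely spell out the recurrence, EGF, and boundary cases that the paper dismisses as straightforward.
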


\begin{proof}
We clearly have
$$
p(n) = \sum_{k=1}^n{n\choose k,1,\ldots,1},\quad n\geq 1,
$$
where the multinomial coefficient ${n\choose k,1,\ldots,1}$ provides the number of ways to put the elements $1,\ldots,n$ into $(n-k+1)$ classes of sizes $k,1,\ldots,1$. The claimed linear recurrence equation and the EGF of $(p(n))_{n \geq 1}$ follow straightforwardly. Regarding the sequence $(p_a(n))_{n \geq 0}$ we observe that $\max_{\precsim}X_n \neq X_n$ whenever $n \geq 2$ (see Fact \ref{lem:count}).
\end{proof}

\begin{table}[htbp]
$$
\begin{array}{|c|rrr|}
\hline n & p(n) & p_e(n) & p_a(n)  \\
\hline 0 & 0 & 0 & 0 \\
1 & 1 & 1 & 1\\
2 & 3 & 2 & 2\\
3 & 10 & 6 & 9\\
4 & 41 & 24 & 40\\
5 & 206 & 120 & 205\\
6 & 1237 & 720 & 1236\\
\hline
\mathrm{OEIS}^{\mathstrut} & \mathrm{A002627} & \mathrm{A000142} & \mathrm{A296964} \\
\hline
\end{array}
$$
\caption[]{First few values of $p(n)$, $p_e(n)$, and $p_a(n)$}
\label{tab:p}
\end{table}

For any integer $n \geq 0$ we denote by $q(n)$ the number of bisymmetric and quasitrivial operations $F\colon X_n^2 \to X_n$. We also denote by $q_e(n)$ (resp.\ $q_a(n)$) the number of bisymmetric and quasitrivial operations $F\colon X_n^2 \to X_n$ that have neutral elements (resp.\ annihilator elements). By convention, we set $q(0)=q_e(0)=q_a(0)=0$. Proposition \ref{prop:qms} provides explicit formulas for these sequences. The first few values of these sequences are shown in Table \ref{tab:q}.\footnote{Note that the sequence A000142 differs from $(q_e(n))_{n\geq 0}$ only at $n=0$.}

\begin{proposition}\label{prop:qms}
The sequence $(q(n))_{n\geq 0}$ satisfies the linear recurrence equation
$$
q(n+1) - (n+1)q(n) = 2, \qquad n \geq 1,
$$
with $q(0)=0$ and $q(1)=1$, and we have the closed-form expression
$$
q(n) = 2p(n)-n! = n!\left(2\sum_{i=1}^{n}\frac{1}{i!}-1\right), \qquad n \geq 1.
$$
Moreover, its EGF is given by $\hat Q(z) = (2e^z-3)/(1-z)$. Furthermore, for any integer $n \geq 1$ we have $q_{e}(n)=n!$, with $q_{e}(0)=0$. Also, for any integer $n \geq 2$ we have $q_a(n)=q(n)-2$, with $q_a(0)=0$ and $q_a(1)=1$. 
\end{proposition}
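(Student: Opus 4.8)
The plan is to read every assertion off the correspondence furnished by Theorem~\ref{thm:B}. Combining Theorem~\ref{thm:B}(i)$\Leftrightarrow$(ii) with Remark~\ref{rem:con}, a bisymmetric and quasitrivial operation $F\colon X_n^2\to X_n$ amounts to the same data as a quasilinear weak ordering $\precsim$ on $X_n$ (which is \emph{uniquely} recovered from $F$, so nothing is overcounted) together with, for each $\sim$-class $A$ with $|A|\geq 2$, a choice of $\pi_1|_{A\times A}$ or $\pi_2|_{A\times A}$ in the defining formula~\eqref{eq:kimura}. The structural point is Definition~\ref{def:quasi}: if $\precsim$ is quasilinear, then every $\sim$-class other than $\min_{\precsim}X_n$ is a singleton, so there is at most one class of size $\geq 2$; it contributes a factor of $2$ precisely when $|\min_{\precsim}X_n|\geq 2$, while by Fact~\ref{lem:count} the case $|\min_{\precsim}X_n|=1$ is exactly that of a linear ordering and then yields a unique $F$. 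Summing over the $p(n)$ quasilinear weak orderings on $X_n$ and using $p_e(n)=n!$, I obtain
$$
q(n) \;=\; p_e(n)\cdot 1 \;+\; \bigl(p(n)-p_e(n)\bigr)\cdot 2 \;=\; 2\,p(n)-n!,\qquad n\geq 1,
$$
with $q(0)=0$ by convention and $q(1)=2\cdot1-1=1$. Everything else about $(q(n))_n$ is then routine: plugging in $p(n)=n!\sum_{k=1}^n 1/k!$ from Proposition~\ref{prop:uw1} gives the closed form; combining $q(n)=2p(n)-n!$ with $p(n+1)-(n+1)p(n)=1$ gives $q(n+1)-(n+1)q(n)=2\bigl(p(n+1)-(n+1)p(n)\bigr)-(n+1)!+(n+1)\,n!=2$ for $n\geq 1$; and substituting $q(n)=2p(n)-n!$ into $\sum_{n\geq1}q(n)z^n/n!$, using $\hat P(z)=(e^z-1)/(1-z)$ and $\sum_{n\geq1}z^n=z/(1-z)$, yields $2\hat P(z)-z/(1-z)$, which reduces to the claimed EGF.

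For $(q_e(n))_n$: by Lemma~\ref{lem:iso}, $e$ is a neutral element of $F$ iff $(e,e)$ is $F$-isolated, i.e.\ iff $\deg_F(e)=0$; comparing with the degree sequence~\eqref{eq:deg} (equivalently, using~\eqref{eq:degx}), this occurs iff $k=1$, i.e.\ iff the associated quasilinear weak ordering $\precsim$ has $\min_{\precsim}X_n$ reduced to a singleton, i.e.\ iff $\precsim$ is a linear ordering (Fact~\ref{lem:count}). Since each linear ordering determines exactly one such $F$, I conclude $q_e(n)=p_e(n)=n!$ for $n\geq1$, with $q_e(0)=0$.

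For $(q_a(n))_n$: by Proposition~\ref{prop:an}, $a$ is an annihilator of $F$ iff $\deg_F(a)=2(n-1)$, i.e.\ iff the value $2n-2$ appears in the degree sequence~\eqref{eq:deg}. For $n\geq2$ this holds iff $k\leq n-1$; in the remaining case $k=n$ all $F$-degrees equal $n-1<2n-2$, and by~\eqref{eq:degx} this forces the associated weak ordering to have a single $\sim$-class, whence $F=\pi_1$ or $F=\pi_2$ by~\eqref{eq:kimura}. Thus exactly two bisymmetric and quasitrivial operations on $X_n$ ($n\geq2$) lack an annihilator, so $q_a(n)=q(n)-2$; the boundary values $q_a(1)=1$ (the unique operation on $X_1$ is its own annihilator) and $q_a(0)=0$ are immediate.

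No step is a serious obstacle once Theorem~\ref{thm:B} and Proposition~\ref{prop:uw1} are in hand; the one place deserving care is the weighted count in the first display—establishing that a quasilinear weak ordering has at most one non-singleton $\sim$-class and that this class accounts for exactly a factor of $2$ in the number of associated operations, with the dichotomy governed by whether $\precsim$ is linear (so that $|\min_{\precsim}X_n|=1$). After that, the recurrence, closed form, EGF, and the $q_e$, $q_a$ computations are bookkeeping built on Lemmas~\ref{lem:iso} and Proposition~\ref{prop:an}.
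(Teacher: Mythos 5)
Your proposal is correct and its core is the same as the paper's: both derive $q(n)=2p(n)-n!$ by observing that, via Theorem~\ref{thm:B} and the uniqueness in Remark~\ref{rem:con}, each quasilinear weak ordering contributes two operations (the choice of $\pi_1$ or $\pi_2$ on the only possible non-singleton $\sim$-class, namely $\min_{\precsim}X_n$), except that linear orderings contribute only one. Where you diverge is in the bookkeeping for $q_e$ and $q_a$: the paper gets $q_e(n)=n!$ from Corollary~\ref{cor:equiv}(ii) together with Remark~\ref{rem:sym}(a) (reducing to the count of associative, commutative, and quasitrivial operations), and gets $q_a(n)=q(n)-2$ from Fact~\ref{lem:count} applied to $\max_{\precsim}X_n$; you instead read both off the degree sequence \eqref{eq:deg}, using Lemma~\ref{lem:iso} (neutral element iff $\deg_F(e)=0$) and Proposition~\ref{prop:an} (annihilator iff $\deg_F(a)=2(n-1)$). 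The two routes are equivalent, and yours is arguably more self-contained, though it is tied to the finite setting (which is all that is needed here). One small caveat: $2\hat P(z)-z/(1-z)$ equals $(2e^z-2-z)/(1-z)$, which differs from the stated $(2e^z-3)/(1-z)$ by the constant $-1$; the stated EGF matches the values $q(n)$ only for $n\geq 1$ (it would require $q(0)=-1$ rather than the convention $q(0)=0$), so your assertion that your expression ``reduces to the claimed EGF'' is not literally an identity --- but this discrepancy originates in the statement itself, not in your argument.
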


\begin{proof}
It is not difficult to see that the number of bisymmetric and quasitrivial operations on $X_n$ is given by
$$
q(n) = 2p(n) - n! =  n!\left(2\sum_{i=1}^{n}\frac{1}{i!}-1\right), \qquad n \geq 1.
$$
Indeed, since $F \colon X_n^2 \to X_n$ is bisymmetric and quasitrivial, we have by Theorem \ref{thm:B} that $F$ is of the form \eqref{eq:kimura} for some quasilinear weak ordering $\precsim$ on $X_n$. Since $F|_{\min_{\precsim}X_n} = \pi_1 ~\mbox{or}~ \pi_2$, we have to count twice the number of $k$-element subsets of $X_n$, for every $k \in \{1,\ldots,n\}$. However, the number of linear orderings on $X_n$ should be counted only once (indeed, by Remark \ref{rem:sym}(a) there is a one-to-one correspondence between linear orderings and bisymmetric, commutative, and quasitrivial operations on $X_n$). Hence, $q(n) = 2p(n) - n!$. The claimed linear recurrence equation and the EGF of $(q(n))_{n \geq 1}$ follow straightforwardly. Using Corollary \ref{cor:equiv}(ii) and Remark \ref{rem:sym}(a), we observe that the sequence $(q_e(n))_{n \geq 0}$, with $q_{e}(0)=0$, gives the number of linear orderings on $X_n$. Finally, regarding the sequence $(q_a(n))_{n \geq 0}$, we observe that $\max_{\preceq}X_n \neq X_n$ whenever $n \geq 2$ (see Fact \ref{lem:count}).
\end{proof}

\begin{table}[htbp]
$$
\begin{array}{|c|rrr|}
\hline n & q(n) & q_e(n) & q_a(n)  \\
\hline 0 & 0 & 0 & 0 \\
1 & 1 & 1 & 1 \\
2 & 4 & 2 & 2\\
3 & 14 & 6 & 12 \\
4 & 58 & 24 & 56\\
5 & 292 & 120 & 290\\
6 & 1754 & 720 & 1752\\
\hline
\mathrm{OEIS}^{\mathstrut} & \mathrm{A296943} & \mathrm{A000142} & \mathrm{A296944}\\
\hline
\end{array}
$$
\caption[]{First few values of $q(n)$, $q_e(n)$, and $q_a(n)$}
\label{tab:q}
\end{table}

We are now interested in computing the size of the class of bisymmetric, quasitrivial, and $\leq_n$-preserving operations $F\colon X_n^2 \to X_n$. To this extent, we first consider a preliminary result.

%

\begin{lemma}\label{lem:str}
Let $\precsim$ be a quasilinear weak ordering on $X_n$ that is weakly single-peaked w.r.t.\ $\leq_n$. If $\max_{\precsim}X_n \neq X_n$, then $\max_{\precsim}X_n \subseteq \{1,n\}$ and $|\max_{\precsim}X_n|=1$.
\end{lemma}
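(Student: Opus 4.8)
The plan is to handle the two conclusions separately: the cardinality statement $|\max_{\precsim}X_n| = 1$ will come directly from Fact~\ref{lem:count}, and the inclusion $\max_{\precsim}X_n \subseteq \{1,n\}$ from a single application of weak single-peakedness to the triple $(1,m,n)$, where $m$ is the unique maximal element.

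First I would note that, since $X_n$ is finite, the asymmetric part of $\precsim$ induces a linear ordering on the finite quotient $X_n/\sim$, which has a largest class; hence $\max_{\precsim}X_n \neq \emptyset$. The second bullet of Fact~\ref{lem:count} then applies, and since $\max_{\precsim}X_n \neq X_n$ by hypothesis we obtain $|\max_{\precsim}X_n| = 1$. Write $\max_{\precsim}X_n = \{m\}$; it remains to show $m \in \{1,n\}$. Assume toward a contradiction that $m \notin \{1,n\}$, so that $1 <_n m <_n n$ and thus $m \in \mathrm{conv}_{\leq_n}(1,n)$. Applying Definition~\ref{def:WBlack} with $x = 1$, $y = m$, $z = n$ gives $m \prec 1$, or $m \prec n$, or $1 \sim m \sim n$.

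The first two alternatives are immediately impossible: maximality of $m$ forces $1 \precsim m$ and $n \precsim m$, which are incompatible with $m \prec 1$ and with $m \prec n$, respectively. In the remaining case $1 \sim m \sim n$, I would combine $m \sim 1$ (resp.\ $m \sim n$) with the maximality of $m$ to deduce that $1$ (resp.\ $n$) is also maximal: for any $x \in X_n$ we have $x \precsim m \precsim 1$, hence $x \precsim 1$, so $1 \in \max_{\precsim}X_n$, and likewise $n \in \max_{\precsim}X_n$; this yields $|\max_{\precsim}X_n| \geq 3$, contradicting the equality $|\max_{\precsim}X_n| = 1$ established above. (Alternatively, quasilinearity already forbids $1 \sim m$ with $1 \neq m$ unless $m \in \min_{\precsim}X_n$, and an element that is at once minimal and maximal forces $\max_{\precsim}X_n = X_n$.) Either way we reach a contradiction, so $m \in \{1,n\}$.

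This argument is short and I do not anticipate any real obstacle; the only delicate point is checking that the ``all equivalent'' option $1 \sim m \sim n$ permitted by the definition of weak single-peakedness is genuinely ruled out, and for this both the cardinality bound just obtained and quasilinearity provide a one-line resolution.
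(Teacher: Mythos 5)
Your proposal is correct and follows essentially the same route as the paper: apply Fact~\ref{lem:count} to get $|\max_{\precsim}X_n|=1$ and then observe that a maximal element $m\notin\{1,n\}$ would make the triple $(1,m,n)$ violate weak single-peakedness. You merely spell out in more detail (correctly) why the residual case $1\sim m\sim n$ is impossible, which the paper leaves implicit.
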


\begin{proof}
We proceed by contradiction. By Fact \ref{lem:count}, the set $\max_{\precsim}X_n$ contains exactly one element. Suppose that $\max_{\precsim}X_n = \{x\}$, where, $x \in X_n\setminus \{1,n\}$. Then the triplet $(1,x,n)$ violates weak single-peakedness of $\precsim$.
\end{proof}

For any integer $n \geq 0$ we denote by $u(n)$ the number of quasilinear weak orderings $\precsim$ on $X_n$ that are weakly single-peaked w.r.t.\ $\leq_n$. We also denote by $u_e(n)$ (resp.\ $u_a(n)$) the number of quasilinear weak orderings $\precsim$ on $X_n$ that are weakly single-peaked w.r.t.\ $\leq_n$ and for which $X_n$ has exactly one minimal element (resp.\ exactly one maximal element) for $\precsim$. By convention, we set $u(0)=u_e(0)=u_a(0)=0$. Proposition \ref{prop:uw2} provides explicit formulas for these sequences. The first few values of these sequences are shown in Table \ref{tab:u}.

\begin{proposition}\label{prop:uw2}
The sequence $(u(n))_{n\geq 0}$ satisfies the linear recurrence equation
$$
u(n+1) = 2u(n) + 1, \qquad n \geq 0,
$$
with $u(0)=0$, and we have the closed-form expression
$$
u(n) = 2^n-1, \qquad n \geq 0.
$$
Moreover, its GF is given by $U(z) = z/(2z^2-3z+1)$. Furthermore, for any integer $n \geq 1$ we have $u_e(n)=2^{n-1}$ with $u_e(0)=0$. Also, for any integer 
$n \geq 2$ we have $u_a(n)=u(n)-1$ with $u_a(0)=0$ and $u_a(1)=1$.
\end{proposition}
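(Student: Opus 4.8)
The plan is to establish the recurrence $u(n+1)=2u(n)+1$ first; the closed form, the generating function, and the formula for $u_a(n)$ will then follow by routine computation, and the formula for $u_e(n)$ is almost immediate.

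For $u_e(n)$: by Fact~\ref{lem:min}, a quasilinear weak ordering on $X_n$ that is weakly single-peaked w.r.t.\ $\leq_n$ and has exactly one minimal element is precisely a single-peaked linear ordering on $X_n$ w.r.t.\ $\leq_n$, and conversely any such linear ordering is quasilinear and has a unique minimal element. Hence $u_e(n)$ is the number of single-peaked linear orderings on $X_n$ w.r.t.\ $\leq_n$, which equals $2^{n-1}$ for $n\geq 1$ by Remark~\ref{def:Black}, while $u_e(0)=0$ by convention.

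For the recurrence, write $\mathcal{U}_n$ for the set of quasilinear weak orderings on $X_n$ that are weakly single-peaked w.r.t.\ $\leq_n$, so that $u(n)=|\mathcal{U}_n|$, and I first prove $u(n+1)=2u(n)+1$ for $n\geq 1$. I would partition $\mathcal{U}_{n+1}$ according to the set $\max_{\precsim}X_{n+1}$. If $\max_{\precsim}X_{n+1}=X_{n+1}$, then every two elements are $\precsim$-equivalent, so $\precsim$ is the unique trivial weak ordering on $X_{n+1}$, which is easily checked to be quasilinear and weakly single-peaked; this accounts for exactly one ordering. Otherwise Lemma~\ref{lem:str} forces $\max_{\precsim}X_{n+1}=\{1\}$ or $\max_{\precsim}X_{n+1}=\{n+1\}$. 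The key step is to exhibit a bijection between the orderings in $\mathcal{U}_{n+1}$ with $\max_{\precsim}X_{n+1}=\{n+1\}$ and $\mathcal{U}_n$, where $X_n$ is identified with $X_{n+1}\setminus\{n+1\}$ equipped with $\leq_n$: in one direction, restrict $\precsim$ to $X_n$ (quasilinearity passes to subsets directly, and since $n+1$ is the $\leq_{n+1}$-largest element, the convex hulls of elements of $X_n$ are unchanged, so weak single-peakedness survives the restriction); in the other direction, given an ordering in $\mathcal{U}_n$, extend it to $X_{n+1}$ by letting $n+1$ be strictly $\precsim$-above every element of $X_n$, and verify that this extension is a weak ordering, is quasilinear (its only possibly non-singleton $\sim$-class is the minimal class of the original ordering, which stays minimal), is weakly single-peaked w.r.t.\ $\leq_{n+1}$ (in any triple involving $n+1$, the element $n+1$ sits at an endpoint of $\leq_{n+1}$ and is strictly $\precsim$-above the others, so the defining condition holds), and has $\max_{\precsim}X_{n+1}=\{n+1\}$. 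These two maps are mutually inverse, so this family has size $u(n)$; by the order-reversing permutation $i\mapsto n+2-i$ of $X_{n+1}$, which preserves betweenness and hence maps $\mathcal{U}_{n+1}$ onto itself while swapping the two families, the set of orderings with $\max_{\precsim}X_{n+1}=\{1\}$ also has size $u(n)$. For $n\geq 1$ the three families are pairwise disjoint, so $u(n+1)=2u(n)+1$. Since $u(0)=0$ by convention and $u(1)=1$ (the unique weak ordering on a one-element set), the recurrence in fact holds for all $n\geq 0$.

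The remaining claims follow at once. Observing that $u(n)+1$ doubles at each step gives $u(n)=2^n-1$, and summing the geometric series yields $U(z)=\sum_{n\geq 0}(2^n-1)z^n=\frac{1}{1-2z}-\frac{1}{1-z}=\frac{z}{2z^2-3z+1}$. Finally, for $n\geq 2$ the trivial weak ordering is the unique element of $\mathcal{U}_n$ with $\max_{\precsim}X_n=X_n$, and it has $n\geq 2$ maximal elements, while every other element of $\mathcal{U}_n$ has $\max_{\precsim}X_n\subseteq\{1,n\}$ with $|\max_{\precsim}X_n|=1$ by Lemma~\ref{lem:str}; hence $u_a(n)=u(n)-1=2^n-2$. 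For $n=1$ the unique weak ordering has exactly one maximal element, so $u_a(1)=1$, and $u_a(0)=0$ by convention. The main obstacle is the careful verification that the ``delete the top element'' map is a well-defined bijection in the inductive step; the rest is bookkeeping.
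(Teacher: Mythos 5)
Your proof is correct and follows essentially the same route as the paper: partition the quasilinear, weakly single-peaked weak orderings by $\max_{\precsim}X_{n}$ using Lemma~\ref{lem:str}, obtain the recurrence by deleting the unique maximal element, and derive $u_e(n)$ from Fact~\ref{lem:min} with Remark~\ref{def:Black} and $u_a(n)=u(n)-1$ by excluding the single trivial ordering. The only (harmless) variation is that you treat the case $\max_{\precsim}X_{n+1}=\{1\}$ via the order-reversing permutation rather than by a second direct restriction, and you spell out the bijection in more detail than the paper does.
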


\begin{proof}
We clearly have $u(0)=0$ and $u(1)=1$. So let us assume that $n \geq 2$. If $\precsim$ is a quasilinear weak ordering on $X_n$ that is weakly single-peaked w.r.t.\ $\leq_n$, then by Lemma \ref{lem:str}, either $\max_{\precsim}X_n = X_n$ or $\max_{\precsim}X_n = \{1\}$ or $\max_{\precsim}X_n = \{n\}$. In the two latter cases, it is clear that the restriction of $\precsim$ to $X_n \setminus \max_{\precsim}X_n$ is quasilinear and weakly single-peaked w.r.t.\ the restriction of $\leq_n$ to $X_n \setminus \max_{\precsim}X_n$. It follows that the number $u(n)$ of quasilinear weak orderings on $X_n$ that are weakly single-peaked w.r.t.\ $\leq_n$ satisfies the first order linear equation
$$
u(n) = 1 + u(n-1) + u(n-1), \qquad n \geq 2.
$$
The stated expression of $u(n)$ and the GF of $(u(n))_{n \geq 2}$ follow straightforwardly. Using Fact \ref{lem:min} and Remark \ref{def:Black}, we observe that the sequence $(u_e(n))_{n \geq 0}$, with $u_e(0)=0$, gives the number of linear orderings on $X_n$ that are single-peaked w.r.t.\ $\leq_n$. Finally, regarding the sequence $(u_a(n))_{n \geq 0}$, we observe that $\max_{\precsim}X_n \neq X_n$ whenever $n \geq 2$ (see Fact \ref{lem:count}).
\end{proof}

\begin{table}[htbp]
$$
\begin{array}{|c|rrr|}
\hline n & u(n) & u_e(n) & u_a(n) \\
\hline 0 & 0 & 0 & 0\\
1 & 1 & 1 & 1\\
2 & 3 & 2 & 2\\
3 & 7 & 4 & 6\\
4 & 15 & 8 & 14\\
5 & 31 & 16 & 30\\
6 & 63 & 32 & 62\\
\hline
\mathrm{OEIS}^{\mathstrut} & \mathrm{A000225} & \mathrm{A131577} & \mathrm{A296965} \\
\hline
\end{array}
$$
\caption[]{First few values of $u(n)$, $u_e(n)$, and $u_a(n)$}
\label{tab:u}
\end{table}

\begin{example}
The $p(3)=10$ quasilinear weak orderings on $X_3$ are: $1 \prec 2 \prec 3$, $1 \prec 3 \prec 2$, $2 \prec 1 \prec 3$, $2 \prec 3 \prec 1$, $3 \prec 1 \prec 2$, $3 \prec 2 \prec 1$, $1 \sim 2 \prec 3$, $1 \sim 3 \prec 2$, $2 \sim 3 \prec 1,$ and $1 \sim 2 \sim 3$. Among these quasilinear weak orderings, $p_e(3)=6$ have exactly one minimal element, $p_a(3)=9$ have exactly one maximal element, and $u(3)=7$ are weakly single-peaked w.r.t. $\leq_3$.
\end{example}

For any integer $n \geq 0$ we denote by $v(n)$ the number of bisymmetric, quasitrivial, and $\leq_n$-preserving operations $F \colon X_n^2 \to X_n$. We also denote by $v_e(n)$ (resp.\ $v_a(n)$) the number of bisymmetric, quasitrivial, and $\leq_n$-preserving operations $F \colon X_n^2 \to X_n$ that have neutral elements (resp.\ annihilator elements). By convention, we set $v(0)=v_e(0)=v_a(0)=0$. Proposition \ref{prop:count} provides explicit formulas for these sequences. The first few values of these sequences are shown in Table \ref{tab:v}.

\begin{proposition}\label{prop:count}
The sequence $(v(n))_{n\geq 0}$ satisfies the linear recurrence equation
$$
v(n+1) = 2v(n) + 2, \qquad n \geq 1,
$$
with $v(0)=0$ and $v(1)=1$, and we have the closed-form expression
$$
v(n) = 3 \cdot 2^{n-1}-2, \qquad n \geq 1.
$$
Moreover, its GF is given by $V(z) = z(z+1)/(2z^2-3z+1)$. Furthermore, for any integer $n \geq 1$ we have $v_e(n)=2^{n-1}$ with $v_e(0)=0$. Also, for any integer 
$n \geq 2$ we have $v_a(n)=v(n)-2$ with $v_a(0)=0$ and $v_a(1)=1$.
\end{proposition}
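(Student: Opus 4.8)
The plan is to follow the pattern of the proof of Proposition~\ref{prop:uw2}, translating operations into weak orderings by means of Proposition~\ref{thm:bs} (equivalently Theorem~\ref{thm:BND}). Recall from Proposition~\ref{thm:bs} that a bisymmetric, quasitrivial, and $\leq_n$-preserving operation $F\colon X_n^2\to X_n$ is exactly an operation of the form \eqref{eq:kimura} for some quasilinear weak ordering $\precsim$ on $X_n$ that is weakly single-peaked w.r.t.\ $\leq_n$. Since such a $\precsim$ has every $\sim$-class other than $\min_{\precsim}X_n$ reduced to a singleton, the form \eqref{eq:kimura} pins down $F$ on all pairs of distinct classes and leaves only the choice between $\pi_1$ and $\pi_2$ on $\min_{\precsim}X_n$ --- a genuine choice when $|\min_{\precsim}X_n|\geq 2$ and a vacuous one when $|\min_{\precsim}X_n|=1$. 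To set up a recursion for $v(n)$ when $n\geq 2$, I would invoke Lemma~\ref{lem:str}: either $\max_{\precsim}X_n=X_n$, in which case $\precsim$ is the total equivalence and $F\in\{\pi_1,\pi_2\}$ (two operations), or $\max_{\precsim}X_n=\{1\}$, or $\max_{\precsim}X_n=\{n\}$. In the latter two cases the unique maximal element is the annihilator of $F$ (it is the top of the chain and absorbs everything under \eqref{eq:kimura}); deleting it leaves $\precsim$ restricted to a chain of $n-1$ elements --- still quasilinear and still weakly single-peaked, with the same minimal class and hence the same residual choice --- and, conversely, any bisymmetric, quasitrivial, and order-preserving operation on that $(n-1)$-chain extends uniquely to an $F$ on $X_n$ having the prescribed endpoint as annihilator. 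This yields $v(n)=2+2v(n-1)$ for $n\geq 2$, i.e.\ $v(n+1)=2v(n)+2$ for $n\geq 1$, with $v(0)=0$ and $v(1)=1$.

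From this recurrence the closed form $v(n)=3\cdot 2^{n-1}-2$ follows at once, as does the generating function via the partial-fraction identity $V(z)=\tfrac{3z}{1-2z}-\tfrac{2z}{1-z}=z(z+1)/(2z^2-3z+1)$. For $v_e(n)$ I would observe that, among bisymmetric, quasitrivial, and $\leq_n$-preserving operations, having a neutral element is equivalent to being commutative: if $F$ is bisymmetric and has a neutral element then $F$ is associative and commutative by Corollary~\ref{cor:equiv}(ii), while conversely an associative, commutative, quasitrivial operation equals $\max_{\precsim}$ for a linear ordering $\precsim$ and so admits the $\precsim$-least element as neutral element. Hence $v_e(n)$ is the number of bisymmetric, commutative, quasitrivial, and $\leq_n$-preserving operations on $X_n$, namely $2^{n-1}$ by Remark~\ref{rem:sym}(b) (with $v_e(0)=0$ by convention). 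For $v_a(n)$ with $n\geq 2$, the trichotomy above shows that $F$ has an annihilator exactly in the two ``endpoint'' cases, so $v_a(n)=2v(n-1)=3\cdot 2^{n-1}-4=v(n)-2$; the exceptional value $v_a(1)=1$ comes from the unique operation on $X_1$, and $v_a(0)=0$ by convention.

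The step I expect to require genuine (if elementary) work is the reverse direction used in the first paragraph: that adjoining a fresh element as an annihilator at an endpoint of the chain turns a bisymmetric, quasitrivial, and order-preserving operation into one with the same properties, so that the restriction and extension maps are mutually inverse bijections. This is the now-familiar argument from the induction step of the proof of Proposition~\ref{thm:contour2}: whenever the adjoined element occurs among the four arguments of a bisymmetry instance, both sides collapse to that element, and whenever it occurs among the two compared pairs in a $\leq_n$-preservation check, it dominates; the remaining instances reduce to the corresponding properties of the original operation. Everything else is routine bookkeeping against Proposition~\ref{prop:uw2}.
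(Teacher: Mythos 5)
Your proposal is correct and follows essentially the same route as the paper: decompose via Lemma~\ref{lem:str} into the case $\max_{\precsim}X_n=X_n$ (giving the two projections) and the two endpoint cases (giving $2v(n-1)$ by restriction/extension), derive the recurrence $v(n)=2+2v(n-1)$, and handle $v_e$ via Corollary~\ref{cor:equiv}(ii) with Remark~\ref{rem:sym}(b) and $v_a$ via the same trichotomy. You supply more detail than the paper does (notably the verification that adjoining an endpoint annihilator preserves bisymmetry and order-preservation), but the argument is the same.
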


\begin{proof}
We clearly have $v(0)=0$ and $v(1)=1$. So let us assume that $n \geq 2$. If $F \colon X_n^2 \to X_n$ is a bisymmetric, quasitrivial, and $\leq_n$-preserving operation, then by Theorem \ref{thm:BND} it is of the form \eqref{eq:kimura} for some quasilinear weak ordering $\precsim$ on $X_n$ that is weakly single-peaked w.r.t.\ $\leq_n$. By Lemma \ref{lem:str}, either $\max_{\precsim}X_n = X_n$ or $\max_{\precsim}X_n = \{1\}$ or $\max_{\precsim}X_n = \{n\}$. In the first case, we have to consider the two projections $F = \pi_1$ and $F = \pi_2$. In the two latter cases, it is clear that the restriction of $F$ to $(X_n \setminus \max_{\precsim}X_n)^2$ is still bisymmetric, quasitrivial, and $\leq_n'$-preserving, where $\leq_n'$ is the restriction of $\leq_n$ to $X_n \setminus \max_{\precsim}X_n$. It follows that the number $v(n)$ of quasitrivial, bisymmetric, and $\leq_n$-preserving operations $F \colon X_n^2 \to X_n$ satisfies the first order linear equation
$$
v(n) = 2 + v(n-1) + v(n-1), \qquad n \geq 2.
$$
The stated expression of $v(n)$ and the GF of $(v(n))_{n \geq 2}$ follow straightforwardly. Using Corollary \ref{cor:equiv}(ii) and Remark \ref{rem:sym}(b), we observe that the sequence $(v_e(n))_{n \geq 0}$, with $v_e(0)=0$, gives the number of linear orderings on $X_n$ that are single-peaked w.r.t.\ $\leq_n$. Finally, regarding the sequence $(v_a(n))_{n \geq 0}$, we observe that $\max_{\precsim}X_n \neq X_n$ whenever $n \geq 2$ (see Fact \ref{lem:count}). 
\end{proof}

\begin{remark}
We observe that an alternative characterization of the class of bisymmetric, quasitrivial, and $\leq_n$-preserving operations $F\colon X_n^2 \to X_n$ was obtained in \cite{Kiss}. Also, the explicit expression of $v(n)$ as stated in Proposition \ref{prop:count} was independently obtained in \cite{Kiss} by means of a totally different approach.
\end{remark}

\begin{table}[htbp]
$$
\begin{array}{|c|rrr|}
\hline n & v(n) & v_e(n) & v_a(n) \\
\hline 0 & 0 & 0 & 0\\
1 & 1 & 1 & 1\\
2 & 4 & 2 & 2\\
3 & 10 & 4 & 8\\
4 & 22 & 8 & 20\\
5 & 46 & 16 & 44\\
6 & 94 & 32 & 92\\
\hline
\mathrm{OEIS}^{\mathstrut} & \mathrm{A296953} & \mathrm{A131577} & \mathrm{A296954}\\
\hline
\end{array}
$$
\caption[]{First few values of $v(n)$, $v_e(n)$, and $v_a(n)$}
\label{tab:v}
\end{table}

\begin{example}
We show in Figure~\ref{fig:eos4} the $q(3)=14$ bisymmetric and quasitrivial operations on $X_3$. Among these operations, $q_e(3)=6$ have neutral elements, $q_a(3)=12$ have annihilator elements, and $v(3)=10$ are $\leq_3$-preserving.
\end{example}

\setlength{\unitlength}{3.5ex}
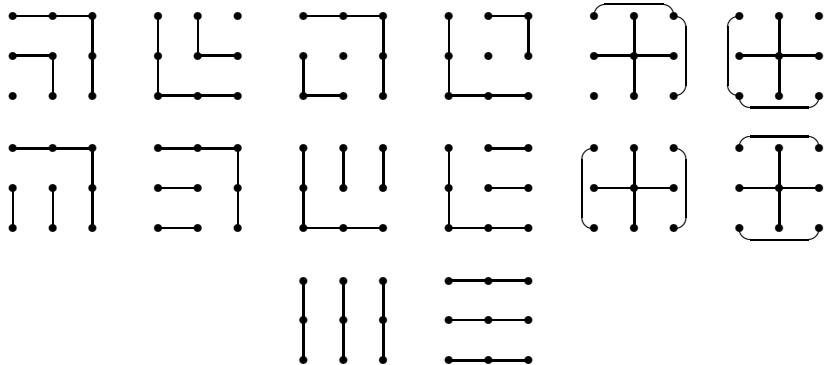
\begin{figure}[htbp]
\begin{center}
\begin{small}
\begin{picture}(3,3)
\multiput(0.5,0.5)(0,1){3}{\multiput(0,0)(1,0){3}{\circle*{0.2}}}
\drawline[1](0.5,2.5)(2.5,2.5)(2.5,0.5)\drawline[1](0.5,1.5)(1.5,1.5)(1.5,0.5)
\end{picture}
\hspace{0.01\textwidth}
\begin{picture}(3,3)
\multiput(0.5,0.5)(0,1){3}{\multiput(0,0)(1,0){3}{\circle*{0.2}}}
\drawline[1](0.5,2.5)(0.5,0.5)(2.5,0.5)\drawline[1](1.5,2.5)(1.5,1.5)(2.5,1.5)
\end{picture}
\hspace{0.01\textwidth}
\begin{picture}(3,3)
\multiput(0.5,0.5)(0,1){3}{\multiput(0,0)(1,0){3}{\circle*{0.2}}}
\drawline[1](0.5,2.5)(2.5,2.5)(2.5,0.5)\drawline[1](0.5,1.5)(0.5,0.5)(1.5,0.5)
\end{picture}
\hspace{0.01\textwidth}
\begin{picture}(3,3)
\multiput(0.5,0.5)(0,1){3}{\multiput(0,0)(1,0){3}{\circle*{0.2}}}
\drawline[1](0.5,2.5)(0.5,0.5)(2.5,0.5)\drawline[1](1.5,2.5)(2.5,2.5)(2.5,1.5)
\end{picture}
\hspace{0.01\textwidth}
\begin{picture}(3,3)
\multiput(0.5,0.5)(0,1){3}{\multiput(0,0)(1,0){3}{\circle*{0.2}}}
\drawline[1](0.5,1.5)(2.5,1.5)\drawline[1](1.5,0.5)(1.5,2.5)
\put(2.5,1.5){\oval(0.6,2)[r]}\put(1.5,2.5){\oval(2,0.6)[t]}
\end{picture}
\hspace{0.01\textwidth}
\begin{picture}(3,3)
\multiput(0.5,0.5)(0,1){3}{\multiput(0,0)(1,0){3}{\circle*{0.2}}}
\drawline[1](0.5,1.5)(2.5,1.5)\drawline[1](1.5,0.5)(1.5,2.5)
\put(0.5,1.5){\oval(0.6,2)[l]}\put(1.5,0.5){\oval(2,0.6)[b]}
\end{picture}

\medskip

\begin{picture}(3,3)
\multiput(0.5,0.5)(0,1){3}{\multiput(0,0)(1,0){3}{\circle*{0.2}}}
\drawline[1](0.5,2.5)(2.5,2.5)(2.5,0.5)\drawline[1](0.5,1.5)(0.5,0.5)\drawline[1](1.5,1.5)(1.5,0.5)
\end{picture}
\hspace{0.01\textwidth}
\begin{picture}(3,3)
\multiput(0.5,0.5)(0,1){3}{\multiput(0,0)(1,0){3}{\circle*{0.2}}}
\drawline[1](0.5,2.5)(2.5,2.5)(2.5,0.5)\drawline[1](0.5,1.5)(1.5,1.5)\drawline[1](0.5,0.5)(1.5,0.5)
\end{picture}
\hspace{0.01\textwidth}
\begin{picture}(3,3)
\multiput(0.5,0.5)(0,1){3}{\multiput(0,0)(1,0){3}{\circle*{0.2}}}
\drawline[1](0.5,2.5)(0.5,0.5)(2.5,0.5)\drawline[1](1.5,2.5)(1.5,1.5)\drawline[1](2.5,2.5)(2.5,1.5)
\end{picture}
\hspace{0.01\textwidth}
\begin{picture}(3,3)
\multiput(0.5,0.5)(0,1){3}{\multiput(0,0)(1,0){3}{\circle*{0.2}}}
\drawline[1](0.5,2.5)(0.5,0.5)(2.5,0.5)\drawline[1](1.5,2.5)(2.5,2.5)\drawline[1](1.5,1.5)(2.5,1.5)
\end{picture}
\hspace{0.01\textwidth}
\begin{picture}(3,3)
\multiput(0.5,0.5)(0,1){3}{\multiput(0,0)(1,0){3}{\circle*{0.2}}}
\drawline[1](0.5,1.5)(2.5,1.5)\drawline[1](1.5,0.5)(1.5,2.5)
\put(2.5,1.5){\oval(0.6,2)[r]}\put(0.5,1.5){\oval(0.6,2)[l]}
\end{picture}
\hspace{0.01\textwidth}
\begin{picture}(3,3)
\multiput(0.5,0.5)(0,1){3}{\multiput(0,0)(1,0){3}{\circle*{0.2}}}
\drawline[1](0.5,1.5)(2.5,1.5)\drawline[1](1.5,0.5)(1.5,2.5)
\put(1.5,0.5){\oval(2,0.6)[b]}\put(1.5,2.5){\oval(2,0.6)[t]}
\end{picture}

\medskip

\begin{picture}(3,3)
\multiput(0.5,0.5)(0,1){3}{\multiput(0,0)(1,0){3}{\circle*{0.2}}}
\drawline[1](0.5,0.5)(0.5,2.5)\drawline[1](1.5,0.5)(1.5,2.5)\drawline[1](2.5,0.5)(2.5,2.5)
\end{picture}
\hspace{0.01\textwidth}
\begin{picture}(3,3)
\multiput(0.5,0.5)(0,1){3}{\multiput(0,0)(1,0){3}{\circle*{0.2}}}
\drawline[1](0.5,2.5)(2.5,2.5)\drawline[1](0.5,1.5)(2.5,1.5)\drawline[1](0.5,0.5)(2.5,0.5)
\end{picture}

\end{small}
\caption{The 14 bisymmetric and quasitrivial operations on $X_3$}
\label{fig:eos4}
\end{center}
\end{figure}

\section{Quasilinearity and weak single-peakedness}

In this section we investigate some properties of the quasilinear weak orderings on $X$ that are weakly single-peaked w.r.t.\ a fixed linear ordering $\leq$ on $X$.

The following lemma provides a characterization of quasilinearity under weak single-peakedness.

\begin{lemma}\label{lem:med}
Let $\leq$ be a linear ordering on $X$ and let $\precsim$ be a weak ordering on $X$ that is weakly single-peaked w.r.t.\ $\leq$. Then $\precsim$ is quasilinear iff there exist no $a,b,c\in X$, with $b \in \mathrm{conv}_{\leq}(a,c)$, such that $b \prec a \sim c$.
\end{lemma}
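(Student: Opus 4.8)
The plan is to prove the two implications separately, and the first observation is that the necessity direction (quasilinear $\Rightarrow$ no such triple) does not use weak single-peakedness at all. Indeed, assuming $\precsim$ quasilinear, suppose for contradiction that there are $a,b,c\in X$ with $b\in\mathrm{conv}_{\leq}(a,c)$ and $b\prec a\sim c$. Nonemptiness of $\mathrm{conv}_{\leq}(a,c)$ forces $a\neq c$; the fact that $b$ lies strictly $\leq$-between $a$ and $c$ forces $b\notin\{a,c\}$; and $b\prec a\sim c$ yields $b\prec c$. So $a,b,c$ are pairwise distinct with $b\prec a\sim c$, which is precisely the pattern forbidden by quasilinearity (Definition~\ref{def:quasi}), a contradiction.

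For the sufficiency direction I would argue by contradiction: assume no triple $a,b,c$ satisfies $b\in\mathrm{conv}_{\leq}(a,c)$ and $b\prec a\sim c$, and suppose $\precsim$ is not quasilinear, so there are pairwise distinct $a,b,c$ with $a\prec b\sim c$. Since $\leq$ is a linear ordering and $a,b,c$ are distinct, exactly one of the three is strictly $\leq$-between the other two. The crux is to show this middle element must be $a$: if instead $b\in\mathrm{conv}_{\leq}(a,c)$, then weak single-peakedness applied to $(a,b,c)$ gives $b\prec a$, $b\prec c$, or $a\sim b\sim c$, each contradicting $a\prec b\sim c$; and if $c\in\mathrm{conv}_{\leq}(a,b)$, then (after first noting that $a\prec b\sim c$ implies $a\prec c$ by transitivity and totality) weak single-peakedness applied to $(a,c,b)$ gives $c\prec a$, $c\prec b$, or $a\sim c\sim b$, again each impossible. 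Hence $a\in\mathrm{conv}_{\leq}(b,c)$, and then the triple $(b,a,c)$ --- in the notation of the statement, with $b$ playing the role of the outer element $a$, $a$ the role of the middle element $b$, and $c$ the role of $c$ --- satisfies $a\in\mathrm{conv}_{\leq}(b,c)$ and $a\prec b\sim c$, i.e.\ it is exactly a triple of the forbidden kind, contradicting our assumption. Therefore $\precsim$ is quasilinear.

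The only delicate point is the $\leq$-trichotomy in the sufficiency argument together with the bookkeeping of which of $a,b,c$ plays which role when invoking weak single-peakedness (in particular the auxiliary deduction $a\prec c$ in the subcase where $c$ is the $\leq$-middle element). I expect this case analysis to be the main --- though quite mild --- obstacle; everything else is a direct unwinding of the definitions of quasilinearity, weak single-peakedness, and $\mathrm{conv}_{\leq}$.
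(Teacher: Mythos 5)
Your proof is correct and follows essentially the same route as the paper: the necessity direction is a direct unwinding of the definition of quasilinearity (the paper simply calls it ``Obvious''), and the sufficiency direction is the same contradiction argument, where weak single-peakedness rules out $b$ or $c$ being the $\leq$-middle element of a non-quasilinear witness $a\prec b\sim c$, forcing $a\in\mathrm{conv}_{\leq}(b,c)$ and hence a forbidden triple. The paper compresses all of this into one line, so your explicit trichotomy and the auxiliary step $a\prec c$ are just the details it leaves implicit.
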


\begin{proof}
(Necessity) Obvious.

(Sufficiency) We proceed by contradiction. Suppose that there exist pairwise distinct $a,b,c \in X$, such that $a \prec b \sim c$. By weak single-peakedness we have $a \in \mathrm{conv}_{\leq}(b,c)$, a contradiction.
\end{proof}

The following lemma provides a characterization of weak single-peakedness under quasilinearity.

\begin{lemma}\label{lem:med2}
Let $\leq$ be a linear ordering on $X$ and let $\precsim$ be a weak ordering on $X$ that is quasilinear. Then $\precsim$ is weakly single-peaked w.r.t.\ $\leq$ iff for any $a,b,c \in X$ such that $b \in \mathrm{conv}_{\leq}(a,c)$ we have $b \precsim a$ or $b \precsim c$.
\end{lemma}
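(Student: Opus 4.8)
The plan is to prove the two implications separately, in each case arguing directly from Definition~\ref{def:WBlack} together with the reformulation of quasilinearity recorded right after Definition~\ref{def:quasi}: if $\precsim$ is quasilinear and $x\sim y$ with $x\neq y$, then $x,y\in\min_{\precsim}X$.

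For the necessity (weak single-peakedness implies the displayed condition), I would fix $a,b,c\in X$ with $b\in\mathrm{conv}_{\leq}(a,c)$; membership in the strict convex hull already forces $a,b,c$ to be pairwise distinct. Weak single-peakedness then yields $b\prec a$, or $b\prec c$, or $a\sim b\sim c$, and in each of these three cases one immediately gets $b\precsim a$ or $b\precsim c$. This direction uses neither quasilinearity nor anything about $\leq$ beyond what is needed to make sense of $\mathrm{conv}_{\leq}$.

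For the sufficiency, I would fix $x,y,z\in X$ with $y\in\mathrm{conv}_{\leq}(x,z)$ (again $x,y,z$ pairwise distinct) and verify that $y\prec x$, or $y\prec z$, or $x\sim y\sim z$. Applying the hypothesis to the triple $(x,y,z)$ gives $y\precsim x$ or $y\precsim z$; since $\mathrm{conv}_{\leq}$ is symmetric in its two arguments and the target disjunction is symmetric in $x$ and $z$, I may assume $y\precsim x$. If $y\prec x$, we are done. Otherwise $y\sim x$ with $x\neq y$, so quasilinearity forces $y\in\min_{\precsim}X$; in particular $y\precsim z$. Then either $y\prec z$, and we are done, or $y\sim z$, in which case $x\sim y\sim z$, completing the argument.

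The proof is short, and the single place that requires care — and the only place where quasilinearity is genuinely invoked — is this last step: the hypothesis delivers only the \emph{disjunction} $y\precsim x$ or $y\precsim z$, so from $y\sim x$ alone one cannot conclude $x\sim y\sim z$; it is the minimality of $y$, forced by $y\sim x$, $x\neq y$, and quasilinearity, that upgrades the situation to the unconditional comparison $y\precsim z$ and thereby closes the case. I would also take a moment to confirm that the ``without loss of generality $y\precsim x$'' reduction is legitimate, which it is precisely because both the hypothesis and the conclusion are invariant under interchanging $x$ and $z$.
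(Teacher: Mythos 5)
Your proof is correct and, modulo presentation, is essentially the paper's argument: the paper proves sufficiency by contradiction (a failure of weak single-peakedness yields either $a\prec b\sim c$, violating quasilinearity, or $b\succ a$ and $b\succ c$, violating the hypothesis), while you run the same case analysis directly, invoking quasilinearity through the minimality reformulation to upgrade $y\sim x$ to $y\precsim z$. The necessity direction is handled identically (the paper simply calls it obvious).
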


\begin{proof}
(Necessity) Obvious.

(Sufficiency) We proceed by contradiction. Suppose that there exist $a,b,c \in X$, with $b \in \mathrm{conv}_{\leq}(a,c)$, such that $b \succ a$ and $b \succsim c$ (the case $b \succsim a$ and $b \succ c$ is similar). We only have two cases to consider. If $b \sim c$, then quasilinearity is violated. Otherwise, if $b \succ c$, we also arrive at a contradiction.
\end{proof}


From Lemmas \ref{lem:med} and \ref{lem:med2} we obtain the following characterization.

\begin{proposition}\label{prop:ch}
Let $\leq$ be a linear ordering on $X$ and let $\precsim$ be a weak ordering on $X$. Let us also consider the following assertions
\begin{enumerate}
\item[(i)] $\precsim$ is weakly single-peaked w.r.t.\ $\leq$.
\item[(ii)] There exist no $a,b,c \in X$, with $b \in \mathrm{conv}_{\leq}(a,c)$, such that $b \prec a \sim c$.
\item[(iii)] For any $a,b,c \in X$ such that $b \in \mathrm{conv}_{\leq}(a,c)$ we have $b \precsim a$ or $b \precsim c$.
\item[(iv)] $\precsim$ is quasilinear.
\end{enumerate}
Then the conjunction of assertions (i) and (ii) holds iff the conjunction of assertions (iii) and (iv) holds.
\end{proposition}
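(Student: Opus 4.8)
The plan is to obtain the claimed equivalence as an immediate corollary of Lemmas \ref{lem:med} and \ref{lem:med2}, so that essentially no new argument is needed beyond careful bookkeeping. Recall that Lemma \ref{lem:med} says that, \emph{assuming} assertion (i), assertion (iv) is equivalent to assertion (ii); and Lemma \ref{lem:med2} says that, \emph{assuming} assertion (iv), assertion (i) is equivalent to assertion (iii). The idea is therefore to prove the two implications of the stated biconditional separately, each time routing the argument through the ``pivot'' conjunction common to both sides, namely (i) together with (iv).

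For the forward direction I would assume that (i) and (ii) both hold. Since (i) holds, Lemma \ref{lem:med} is applicable and lets us deduce (iv) from (ii); hence (i) and (iv) both hold. Now, since (iv) holds, Lemma \ref{lem:med2} is applicable and lets us deduce (iii) from (i). Therefore (iii) and (iv) both hold, as desired.

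For the backward direction I would assume that (iii) and (iv) both hold. Since (iv) holds, Lemma \ref{lem:med2} is applicable and lets us deduce (i) from (iii); hence (i) and (iv) both hold. Now, since (i) holds, Lemma \ref{lem:med} is applicable and lets us deduce (ii) from (iv). Therefore (i) and (ii) both hold, which completes the proof.

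I do not expect any real obstacle here; the only point requiring attention is to invoke each lemma strictly after its standing hypothesis (weak single-peakedness for Lemma \ref{lem:med}, quasilinearity for Lemma \ref{lem:med2}) has been secured, which is precisely why both directions are funnelled through the intermediate conjunction of (i) and (iv). If desired, one could append a short remark observing that (ii) by itself does not force (iv), and (iii) by itself does not force (i), so the conjunctive phrasing in the statement is genuinely needed; but this is not required for the proof.
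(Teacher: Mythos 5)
Your proposal is correct and is exactly the route the paper takes: the paper gives no separate proof, stating only that the proposition follows from Lemmas \ref{lem:med} and \ref{lem:med2}, and your two-direction argument funnelled through the intermediate conjunction of (i) and (iv) is precisely the intended derivation. The care you take to invoke each lemma only after its standing hypothesis is secured is the one point that genuinely needs checking, and you handle it correctly.
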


%
The following theorem provides a characterization of weak single-peakedness.

\begin{definition}[{see \cite[Definition 5.4]{CouDevMar2}}]
Let $\leq$ be a linear ordering on $X$ and let $\precsim$ be a weak ordering on $X$. A subset $P \subseteq X$, $|P| \geq 2$, is called a \emph{plateau with respect to $(\leq,\precsim)$} if $P$ is convex with respect to $\leq$ and if there exists $x\in X$ such that $P \subseteq [x]_{\sim}$. Moreover, the plateau $P$ is said to be \emph{$\precsim$-minimal} if for all $a \in X$ verifying $a \precsim P$ there exists $z \in P$ such that $a\sim z$.
\end{definition}

\begin{theorem}[{see \cite[Theorem 5.6]{CouDevMar2}}]\label{thm:graph1}
Let $\leq$ be a linear ordering on $X$ and let $\precsim$ be a weak ordering on $X$. Then $\precsim$ is weakly single-peaked w.r.t. $\leq$ iff the following conditions hold.
\begin{enumerate}
\item[(i)] For any $a,b,c \in X$ such that $b \in \mathrm{conv}_{\leq}(a,c)$, we have $b \precsim a$ or $b \precsim c$.
\item[(ii)] If $P \subseteq X$, $|P| \geq 2$, is a plateau w.r.t.\ $(\leq,\precsim)$, then it is $\precsim$-minimal.
\end{enumerate}
\end{theorem}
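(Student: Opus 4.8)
I would prove the two implications separately; necessity is the easy direction and sufficiency carries the content. For necessity, assume $\precsim$ is weakly single-peaked w.r.t.\ $\leq$. Condition (i) is essentially a restatement: given $a,b,c$ with $b \in \cleq(a,c)$, weak single-peakedness yields $b \prec a$, or $b \prec c$, or $a \sim b \sim c$, and in each of these cases $b \precsim a$ or $b \precsim c$. For condition (ii), let $P$ be a plateau w.r.t.\ $(\leq,\precsim)$ and let $a \in X$ satisfy $a \precsim P$; I want $z \in P$ with $a \sim z$. If $a \in P$ this is trivial, so assume $a \notin P$. Since $P$ is convex w.r.t.\ $\leq$, $a$ cannot lie strictly between two elements of $P$, so $a$ precedes all of $P$ or follows all of $P$ in $\leq$; say it precedes all of $P$. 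Choosing distinct $p,q \in P$ with $p<q$, we get $p \in \cleq(a,q)$, so weak single-peakedness gives $p \prec a$, or $p \prec q$, or $a \sim p \sim q$. The first is impossible because $a \precsim p$, the second because $p \sim q$ (both lie in a single $\sim$-class), so $a \sim p$, and $P$ is $\precsim$-minimal.

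For sufficiency, assume (i) and (ii) and suppose, towards a contradiction, that there are $x,y,z$ with $y \in \cleq(x,z)$ but $y \not\prec x$, $y \not\prec z$, and not $x \sim y \sim z$. Using (i) and the symmetry in $x$ and $z$, I may assume $y \precsim x$, hence $y \sim x$; then not $x\sim y\sim z$ forces $y \not\sim z$, so with $y \not\prec z$ we get $z \prec y$, and therefore also $z \prec x$. Since $y \in \cleq(x,z)$, either $x<y<z$ or $z<y<x$, and I treat the first case (the second is handled analogously). The decisive dichotomy is whether the $\leq$-interval $\{w : x \le w \le y\}$ lies entirely in $[x]_\sim$.

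If it does not, pick $w$ with $x<w<y$ and $w \not\sim x$. Applying (i) to $(x,w,y)$ (and using $x\sim y$) gives $w \precsim x$, hence $w \prec x$; now $w<y<z$, so applying (i) to $(w,y,z)$ gives $y \precsim w$ or $y \precsim z$, and since $y \sim x \succ w$ the former fails, so $y \precsim z$, contradicting $z \prec y$. If, on the other hand, the interval does lie in $[x]_\sim$, then $P := \{w : x \le w \le y\}$ is convex w.r.t.\ $\leq$, has $\ge 2$ elements, and is contained in one $\sim$-class, so it is a plateau; by (ii) it is $\precsim$-minimal. Since $z \prec y$ and every element of $P$ is $\sim y$, we have $z \precsim w$ for all $w \in P$, i.e.\ $z \precsim P$, so $\precsim$-minimality gives $w \in P$ with $z \sim w$, whence $z \sim x \sim y$, again contradicting $z \prec y$. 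This completes the contradiction, so $\precsim$ is weakly single-peaked.

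The main obstacle is the sufficiency direction: one must see that the two ways weak single-peakedness can fail — a strictly lower element encountered between $x$ and $y$, versus a flat stretch that ought to be $\precsim$-minimal but is not — are exactly what conditions (i) and (ii) rule out, and one must pick the auxiliary objects (the triple $(w,y,z)$ in the first situation, the interval $[x,y]$ as a plateau in the second) so that the hypotheses genuinely apply. Care is also needed to check that the interval really meets the definition of a plateau and that $z \precsim P$ holds for \emph{every} element of $P$, not only for $y$; and the $\leq$-order bookkeeping (the split $x<y<z$ versus $z<y<x$, and which side of $P$ contains $z$) should be written out explicitly even though each branch is short.
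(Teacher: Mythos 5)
The paper does not prove this theorem at all: it is imported verbatim from \cite[Theorem 5.6]{CouDevMar2}, so there is no internal proof to compare against. Your self-contained argument is correct. Necessity of (i) is indeed immediate from the definition, and your necessity argument for (ii) correctly uses convexity of $P$ to place $a$ entirely to one side of $P$ and then applies weak single-peakedness to a triple $a<p<q$ (or $p<q<a$), eliminating the first two alternatives via $a\precsim P$ and $p\sim q$. For sufficiency, the reduction of a violating triple to the normal form $x\sim y$, $z\prec y$, $x<y<z$ is legitimate (the hypotheses are symmetric in $x$ and $z$), and the dichotomy on whether the $\leq$-interval from $x$ to $y$ is contained in $[x]_\sim$ cleanly separates the two failure modes: a strict dip $w\prec x$ inside the interval contradicts (i) applied to $(w,y,z)$, while a genuine plateau $\{w: x\leq w\leq y\}$ together with $z\precsim P$ contradicts $\precsim$-minimality from (ii). I checked the small steps that are easy to get wrong --- that the interval really is convex and has at least two elements, that $z\precsim w$ holds for every $w\in P$ and not just for $y$, and that $y\precsim w$ is excluded in the dip case because $w\prec x\sim y$ --- and they all hold. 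The only cosmetic remark is that the notation $a\precsim P$ in the definition of $\precsim$-minimality is not formally defined in this paper; you read it as $a\precsim p$ for all $p\in P$, which is the intended meaning and is insensitive to the choice of $p$ since $P$ lies in a single $\sim$-class.
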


\begin{remark}
In \cite[Remark 7]{CouDevMar2} the authors observed that the weak orderings on $X_n$ that are weakly single-peaked w.r.t.\ $\leq_n$ are exactly the weak orderings on $X_n$ that are \emph{single-plateaued w.r.t.\ $\leq_n$} (see, e.g., \cite[Definition 4 and Lemma 17]{Fi15}).
\end{remark}

In \cite[Section 5]{CouDevMar2}, it was observed that the weak single-peakedness property of a weak ordering $\precsim$ on $X$ w.r.t.\ some linear ordering $\leq$ on $X$ can be checked by plotting a function, say $f_{\precsim}$, in a rectangular coordinate system in the following way. Represent the linearly ordered set $(X,\leq)$ on the horizontal axis and the reversed version of the weakly ordered set $(X,\precsim)$ on the vertical axis\footnotemark. The function $f_{\precsim}$ is then defined by its graph $\{(x,x)\mid x \in X\}$. Condition (i) of Theorem \ref{thm:graph1} simply says that the graph of $f_{\precsim}$ is V-free, i.e., there exist no three points $(a,a)$, $(b,b)$, $(c,c)$ in V-shape. Condition (ii) of Theorem \ref{thm:graph1} says that the graph of $f_{\precsim}$ is reversed L-free and L-free, i.e., the two patterns shown in Figure \ref{fig:fpat}, where each horizontal part is a plateau, cannot occur.
\footnotetext{In this representation, two equivalent elements of X have the same position on the vertical axis; see, e.g.,
Figure \ref{fig:wsp}.}

Thus, according to Theorem \ref{thm:graph1}, a weak ordering $\precsim$ on $X$ is weakly single-peaked w.r.t.\ some fixed linear ordering $\leq$ on $X$ iff the graph of $f_{\precsim}$ is V-free, L-free, and reversed L-free.

\setlength{\unitlength}{4ex}
\begin{figure}[htbp]
\begin{center}
\begin{small}
\begin{picture}(6.5,2.5)
\drawline[1](0.5,0.5)(2,0.5)(2.5,2)\drawline[1](4,2)(4.5,0.5)(6,0.5)
\put(0.5,0.5){\circle*{0.15}}\put(2,0.5){\circle*{0.15}}\put(2.5,2){\circle*{0.15}}
\put(4,2){\circle*{0.15}}\put(4.5,0.5){\circle*{0.15}}\put(6,0.5){\circle*{0.15}}
\end{picture}
\end{small}
\caption{The two patterns excluded by condition (ii)}
\label{fig:fpat}
\end{center}
\end{figure}
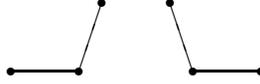

\begin{example}\label{ex:wsp}
Figure~\ref{fig:wsp} gives the functions $f_{\precsim}$ and $f_{\precsim'}$ corresponding to the weak orderings $2\sim 3\prec 1\sim 4$ and $1\prec' 2\sim' 3\prec' 4$, respectively, on $X_4$. We see that $\precsim$ is weakly single-peaked w.r.t.\ $\leq_4$ since the graph of $f_{\precsim}$ is V-free, L-free, and reversed L-free while $\precsim'$ is not weakly single-peaked w.r.t.\ $\leq_4$ since the graph of $f_{\precsim'}$ is not L-free. It has the plateau $P=\{2,3\}$, which is not $\precsim'$-minimal.
\end{example}

\setlength{\unitlength}{3.5ex}
\begin{figure}[htbp]
\begin{center}
\begin{small}
\null\hspace{0.03\textwidth}
\begin{picture}(6,5)
\put(0.5,0.5){\vector(1,0){5}}\put(0.5,0.5){\vector(0,1){4}}
\multiput(1.5,0.45)(1,0){4}{\line(0,1){0.1}}%
\multiput(0.45,1.5)(0,2){2}{\line(1,0){0.1}}%
\put(1.5,0){\makebox(0,0){$1$}}\put(2.5,0){\makebox(0,0){$2$}}\put(3.5,0){\makebox(0,0){$3$}}\put(4.5,0){\makebox(0,0){$4$}}
\put(-0.5,1.5){\makebox(0,0){$1\sim 4$}}\put(-0.5,3.5){\makebox(0,0){$2\sim 3$}}
\drawline[1](1.5,1.5)(2.5,3.5)(3.5,3.5)(4.5,1.5)
\put(1.5,1.5){\circle*{0.2}}\put(2.5,3.5){\circle*{0.2}}\put(3.5,3.5){\circle*{0.2}}\put(4.5,1.5){\circle*{0.2}}
\put(1.7,4){\makebox(0,0){\begin{normalsize}$f_{\precsim}$\end{normalsize}}}
\end{picture}
\hspace{0.1\textwidth}
\begin{picture}(6,5)
\put(0.5,0.5){\vector(1,0){5}}\put(0.5,0.5){\vector(0,1){4}}
\multiput(1.5,0.45)(1,0){4}{\line(0,1){0.1}}%
\multiput(0.45,1.5)(0,1){3}{\line(1,0){0.1}}%
\put(1.5,0){\makebox(0,0){$1$}}\put(2.5,0){\makebox(0,0){$2$}}\put(3.5,0){\makebox(0,0){$3$}}\put(4.5,0){\makebox(0,0){$4$}}
\put(0,1.5){\makebox(0,0){$4$}}\put(-0.5,2.5){\makebox(0,0){$2\sim' 3$}}\put(0,3.5){\makebox(0,0){$1$}}
\drawline[1](1.5,3.5)(2.5,2.5)(3.5,2.5)(4.5,1.5)
\put(1.5,3.5){\circle*{0.2}}\put(2.5,2.5){\circle*{0.2}}\put(3.5,2.5){\circle*{0.2}}\put(4.5,1.5){\circle*{0.2}}
\put(2.5,4){\makebox(0,0){\begin{normalsize}$f_{\precsim'}$\end{normalsize}}}
\end{picture}
\end{small}
\caption{$\precsim$ is weakly single-peaked (left) while $\precsim'$ is not (right)}
\label{fig:wsp}
\end{center}
\end{figure}
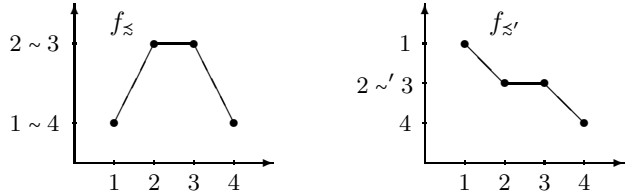

Let us now provide a characterization of quasilinear weak orderings $\precsim$ on $X$ that are weakly single-peaked w.r.t.\ some fixed linear ordering $\leq$ on $X$ in terms of the graph of $f_{\precsim}$.

Clearly, for any linear ordering $\leq$ on $X$ and any quasilinear weak ordering $\precsim$ on $X$, if a plateau w.r.t.\ $(\leq,\precsim)$ exists, then it is $\precsim$-minimal.

\begin{proposition}\label{prop:plat}
Let $\leq$ be a linear ordering on $X$ and let $\precsim$ be a weak ordering on $X$. Consider the assertions (iii) and (iv) of Proposition \ref{prop:ch} as well as the following one.
\begin{enumerate}
\item[(iv')] If there exist $a,b \in X$, $a \neq b$, such that $a \sim b$ then $\mathrm{conv}_{\leq}(a,b)$ is a plateau w.r.t.\ $(\leq,\precsim)$ and it is $\precsim$-minimal.
\end{enumerate}
Then we have ((iii) and (iv)) $\Rightarrow$ (iv'), and (iv') $\Rightarrow$ (iv).
\end{proposition}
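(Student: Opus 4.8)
The plan is to establish the two implications separately, both by elementary manipulations of the definitions of plateau, $\precsim$-minimality, quasilinearity, and conditions (iii)-(iv').

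For the implication ((iii) and (iv)) $\Rightarrow$ (iv'), suppose $a,b \in X$ with $a \neq b$ and $a \sim b$. First I would show that $\mathrm{conv}_{\leq}(a,b)$ (together with $a$ and $b$ themselves, i.e.\ the closed convex hull; I will need to check the paper's convention, but since $|P|\ge 2$ is required for a plateau, the set in play is really $\{z : a \le z \le b\}$ or $\{z : b \le z \le a\}$) is contained in a single $\sim$-class. Here quasilinearity (iv) does the work: since $a \sim b$ with $a \neq b$, Definition \ref{def:quasi} forces $a,b \in \min_{\precsim}X$; hence $a \sim b \in \min_{\precsim}X$. For any $c$ strictly between $a$ and $b$ w.r.t.\ $\leq$, condition (iii) gives $c \precsim a$ or $c \precsim b$, and since $a,b$ are minimal this yields $c \sim a \sim b$. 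Thus the closed hull lies in $[a]_{\sim}$, it is convex w.r.t.\ $\leq$ by construction, and it has at least two elements, so it is a plateau. Its $\precsim$-minimality is then immediate from the remark just before the proposition (or can be reproved directly: any $w$ with $w \precsim P$ satisfies $w \precsim a$, and since $a$ is minimal, $w \sim a$, so take $z = a \in P$).

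For the implication (iv') $\Rightarrow$ (iv), I would argue contrapositively. Suppose $\precsim$ is not quasilinear, so by Definition \ref{def:quasi} there are pairwise distinct $a,b,c$ with $a \prec b \sim c$. Applying (iv') to the pair $\{b,c\}$ gives that the closed hull of $b$ and $c$ is a $\precsim$-minimal plateau $P \subseteq [b]_{\sim}$. Now $a \prec b$ means $a \precsim P$, so $\precsim$-minimality produces $z \in P$ with $a \sim z$; but $z \sim b$, hence $a \sim b$, contradicting $a \prec b$. Therefore $\precsim$ must be quasilinear.

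The only real subtlety — and the step I expect to need the most care — is pinning down the exact convention for $\mathrm{conv}_{\leq}$ versus its closure in the definition of ``plateau'', since the paper defines $\mathrm{conv}_{\leq}(x,y)$ as the \emph{strict} convex hull (excluding $x,y$) whereas a plateau must have $|P|\ge 2$. This means the set referenced in (iv') as ``$\mathrm{conv}_{\leq}(a,b)$ is a plateau'' should be read as the set together with its endpoints, or one interprets the statement as asserting that $\mathrm{conv}_{\leq}(a,b) \cup \{a,b\}$ is a $\precsim$-minimal plateau; I would make this reading explicit at the start of the proof. Once that bookkeeping is settled, both implications are short: the forward direction uses (iii) to fill in the plateau and (iv) to locate it in a minimal class, while the reverse direction just feeds a quasilinearity-violating triple into (iv') and reads off the contradiction from $\precsim$-minimality.
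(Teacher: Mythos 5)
Your proof is correct and follows essentially the same route as the paper's: in both arguments the forward implication uses (iii) to exclude elements of the hull lying strictly above $a\sim b$ and (iv) to exclude those lying strictly below and to get $\precsim$-minimality, while the reverse implication feeds the triple $a\prec b\sim c$ into the $\precsim$-minimality of $\mathrm{conv}_{\leq}(b,c)$. Your explicit observation that $\mathrm{conv}_{\leq}(a,b)$ in (iv') must be read as the closed convex hull $\{z\mid a\leq z\leq b\}$ (since the paper's $\mathrm{conv}_{\leq}$ is strict and a plateau needs $|P|\geq 2$) is a genuine point of care that the paper's own proof glosses over.
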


\begin{proof}
$((iii) ~\mbox{and}~(iv)) \Rightarrow (iv')$. We proceed by contradiction. Let $a,b \in X$, $a \neq b$, such that $a \sim b$ and suppose that $\mathrm{conv}_{\leq}(a,b)$ is not a plateau w.r.t.\ $(\leq,\precsim)$. But then there exists $u \in \mathrm{conv}_{\leq}(a,b)$ such that either $u \prec a \sim b$, which contradicts (iv), or $u \succ a \sim b$ which contradicts (iii). Thus, $\mathrm{conv}_{\leq}(a,b)$ is a plateau w.r.t.\ $(\leq,\precsim)$ and it is $\precsim$-minimal by (iv).

$(iv') \Rightarrow (iv).$ We proceed by contradiction. Suppose that there exist pairwise distinct $a,b,c \in X$ such that $a \prec b \sim c$. But then $\mathrm{conv}_{\leq}(b,c)$ is a plateau w.r.t.\ $(\leq,\precsim)$ which is not $\precsim$-minimal, a contradiction to (iv').
\end{proof}

From Proposition \ref{prop:plat} it follows that conditions (iii) and (iv) hold iff conditions (iii) and (iv') hold. As previously explained, condition (iii) says that the graph of $f_{\precsim}$ is V-free. Condition (iv'), which is equivalent to quasilinearity under (iii), says that the graph of $f_{\precsim}$ is L-free, reversed L-free, and that any two elements which have the same position on the vertical axis form a plateau. Thus, combining Lemma \ref{lem:med2} and Proposition \ref{prop:plat}, we obtain the following result.

\begin{theorem}\label{thm:graph2}
Let $\leq$ be a linear ordering on $X$ and let $\precsim$ be a weak ordering on $X$. Then $\precsim$ is quasilinear and weakly single-peaked w.r.t. $\leq$ iff conditions (iii) and (iv') of Propositions \ref{prop:ch} and \ref{prop:plat} hold (i.e., the graph of $f_{\precsim}$ is V-free, L-free, reversed L-free, and any two elements which have the same position on the vertical axis form a plateau).
\end{theorem}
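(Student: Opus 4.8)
The plan is to assemble the claimed equivalence directly from the two ingredients already in hand: Lemma \ref{lem:med2} and Proposition \ref{prop:plat}. Recall that Lemma \ref{lem:med2} tells us that, \emph{assuming quasilinearity}, weak single-peakedness of $\precsim$ w.r.t.\ $\leq$ is equivalent to condition (iii) of Proposition \ref{prop:ch}, namely that for any $a,b,c\in X$ with $b\in\mathrm{conv}_{\leq}(a,c)$ we have $b\precsim a$ or $b\precsim c$. Meanwhile Proposition \ref{prop:plat} shows that the conjunction of (iii) and quasilinearity (iv) is equivalent to the conjunction of (iii) and (iv'). So the skeleton is: $\precsim$ quasilinear and weakly single-peaked $\iff$ (iv) and (iii) $\iff$ (iii) and (iv'), which is exactly the statement.

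Concretely I would argue both directions. For the forward direction, suppose $\precsim$ is quasilinear and weakly single-peaked w.r.t.\ $\leq$. Quasilinearity is condition (iv). Applying Lemma \ref{lem:med2} (whose hypothesis ``$\precsim$ quasilinear'' is met) gives condition (iii). Now (iii) and (iv) together, via the implication ((iii) and (iv)) $\Rightarrow$ (iv') of Proposition \ref{prop:plat}, yield (iv'); and (iii) we already have. For the converse, suppose (iii) and (iv') hold. By the implication (iv') $\Rightarrow$ (iv) of Proposition \ref{prop:plat}, we get quasilinearity (iv). Then (iii) together with (iv) feeds back into Lemma \ref{lem:med2}: since $\precsim$ is quasilinear and satisfies (iii), it is weakly single-peaked w.r.t.\ $\leq$. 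Hence $\precsim$ is both quasilinear and weakly single-peaked, as required. The parenthetical reformulation in the statement (``the graph of $f_{\precsim}$ is V-free, L-free, reversed L-free, and any two elements which have the same position on the vertical axis form a plateau'') is just the graphical dictionary already set up in the paragraphs preceding the theorem: (iii) $\leftrightarrow$ V-free, and (iv') $\leftrightarrow$ (L-free and reversed L-free and equal-height pairs forming plateaus); so nothing new needs to be proved there beyond citing that dictionary.

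Since every implication needed is supplied verbatim by earlier results, there is essentially no obstacle: the only care required is bookkeeping—making sure that each time Lemma \ref{lem:med2} or a given implication of Proposition \ref{prop:plat} is invoked, its hypotheses are indeed available at that point in the chain (in particular that (iii) is derived before it is re-used as an input). I would present the argument as two short displayed chains of equivalences, or equivalently as the two implications above, and conclude with one sentence identifying (iii) and (iv') with the four graphical conditions via the discussion around Figure \ref{fig:fpat}. No induction, no case analysis on $X$ finite or infinite, and no new combinatorial work is needed.
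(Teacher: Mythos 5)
Your argument is correct and is essentially the paper's own proof: the paper likewise obtains the theorem by combining Lemma \ref{lem:med2} (which, under quasilinearity, identifies weak single-peakedness with condition (iii)) with the two implications of Proposition \ref{prop:plat} (which convert (iii) and (iv) into (iii) and (iv') and back), and then cites the graphical dictionary for the parenthetical reformulation. Your bookkeeping of when each hypothesis is available matches the paper's chain of equivalences exactly.
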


For instance, by Theorem \ref{thm:graph1}, the weak ordering $\precsim$ on $X_4$ defined in Example \ref{ex:wsp} is weakly single-peaked w.r.t.\ $\leq_4$ since the graph of $f_{\precsim}$ is V-free, L-free, and reversed L-free. However, by Theorem \ref{thm:graph2}, it is not quasilinear since the elements $1$ and $4$ have the same position on the vertical axis but do not form a plateau.

\begin{example}\label{ex:qlwsp}
Let us consider the operation $F\colon X_4^2\to X_4$ shown in Figure \ref{fig:qlwsp} (left). We can easily see that it is quasitrivial and $\leq_4$-preserving (see Proposition \ref{prop:jump}). Also, by Theorem \ref{thm:BND}, it is bisymmetric since $\deg_F=(1,1,4,6)$. Furthermore, it is of the form \eqref{eq:kimura} for the weak ordering $\precsim$ on $X_4$ obtained by ranking the numbers $\deg_F(x)$, $x\in X_4$, in nondecreasing order, that is, $2\sim 3\prec 1\prec 4$, see Figure~\ref{fig:qlwsp} (center). By Theorem \ref{thm:BND}, the weak ordering $\precsim$ is quasilinear and weakly single-peaked w.r.t.\ $\leq_4$. By Theorem~\ref{thm:graph2} the graph of $f_{\precsim}$ is V-free, L-free, reversed L-free, and any two elements which have the same position on the vertical axis form a plateau, see Figure~\ref{fig:qlwsp} (right).
\end{example}

\setlength{\unitlength}{4.5ex}
\begin{figure}[htbp]
\begin{center}
\begin{small}
\begin{picture}(4,4)
\multiput(0.5,0.5)(0,1){4}{\multiput(0,0)(1,0){4}{\circle*{0.2}}}
\drawline[1](0.5,3.5)(3.5,3.5)(3.5,0.5)\drawline[1](1.5,2.5)(1.5,1.5)\drawline[1](2.5,2.5)(2.5,1.5)
\drawline[1](0.5,2.5)(0.5,0.5)(2.5,0.5)
\put(0.5,0){\makebox(0,0){$1$}}\put(1,0){\makebox(0,0){$<$}}\put(1.5,0){\makebox(0,0){$2$}}
\put(2,0){\makebox(0,0){$<$}}\put(2.5,0){\makebox(0,0){$3$}}\put(3,0){\makebox(0,0){$<$}}
\put(3.5,0){\makebox(0,0){$4$}}
\end{picture}
\hspace{0.05\textwidth}
\begin{picture}(4,4)
\multiput(0.5,0.5)(0,1){4}{\multiput(0,0)(1,0){4}{\circle*{0.2}}}
\drawline[1](0.5,3.5)(3.5,3.5)(3.5,0.5)\drawline[1](0.5,2.5)(2.5,2.5)(2.5,0.5)\drawline[1](0.5,0.5)(0.5,1.5)
\drawline[1](1.5,0.5)(1.5,1.5)
\put(0.5,0){\makebox(0,0){$2$}}\put(1,0){\makebox(0,0){$\sim$}}\put(1.5,0){\makebox(0,0){$3$}}
\put(2,0){\makebox(0,0){$\prec$}}\put(2.5,0){\makebox(0,0){$1$}}\put(3,0){\makebox(0,0){$\prec$}}
\put(3.5,0){\makebox(0,0){$4$}}
\end{picture}
\hspace{0.06\textwidth}
\begin{picture}(5,4)
\put(0.5,0.5){\vector(1,0){4}}\put(0.5,0.5){\vector(0,1){3}}
\multiput(1,0.45)(0.9,0){4}{\line(0,1){0.1}}%
\multiput(0.45,1)(0,0.9){3}{\line(1,0){0.1}}%
\put(1,0){\makebox(0,0){$1$}}\put(1.9,0){\makebox(0,0){$2$}}\put(2.8,0){\makebox(0,0){$3$}}\put(3.7,0){\makebox(0,0){$4$}}
\put(0,1){\makebox(0,0){$4$}}\put(0,1.9){\makebox(0,0){$1$}}\put(-0.4,2.8){\makebox(0,0){$2\sim 3$}}
\drawline[1](1,1.9)(1.9,2.8)\drawline[1](1.9,2.8)(2.8,2.8)\drawline[1](2.8,2.8)(3.7,1)
\put(1,1.9){\circle*{0.15}}\put(1.9,2.8){\circle*{0.15}}\put(2.8,2.8){\circle*{0.15}}\put(3.7,1){\circle*{0.15}}
\put(2,3.5){\makebox(0,0){\begin{normalsize}$f_{\precsim}$\end{normalsize}}}
\end{picture}
\end{small}
\caption{Example~\ref{ex:qlwsp}}
\label{fig:qlwsp}
\end{center}
\end{figure}

We can now state an alternative characterization of weak single-peakedness under quasilinearity.

\begin{fact}\label{fact:qwsp}
Let $\leq$ be a linear ordering on $X$ and $\precsim$ be a weak ordering on $X$. If $\precsim$ is weakly single-peaked w.r.t.\ $\leq$, then there exists a linear ordering $\preceq'$ on $X$ subordinated to $\precsim$ that is single-peaked w.r.t.\ $\leq$.
\end{fact}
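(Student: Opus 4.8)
The plan is to produce $\preceq'$ explicitly, as the \emph{lexicographic refinement} of $\precsim$ by $\leq$: for distinct $x,y\in X$, declare $x\prec' y$ whenever $x\prec y$, or $x\sim y$ and $x<y$. In words, $\preceq'$ keeps the order prescribed by $\precsim$ and breaks the ties inside each $\sim$-class according to $\leq$. First I would dispose of the routine points: $\preceq'$ is a linear ordering on $X$ (being the lexicographic composition of the weak ordering $\precsim$ with the linear ordering $\leq$, the latter restricting to a linear ordering on every $\sim$-class), and $\preceq'$ is subordinated to $\precsim$, which is immediate from the definition since $x\prec y$ forces $x\prec' y$. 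Note that this construction makes no use of finiteness of $X$.

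The substantive step is to check that $\preceq'$ is single-peaked w.r.t.\ $\leq$, and I would argue by contradiction. Suppose there are $x,y,z\in X$ with $y\in\mathrm{conv}_{\leq}(x,z)$ witnessing a failure of single-peakedness, i.e.\ $y\not\prec' x$ and $y\not\prec' z$. Since $y\in\mathrm{conv}_{\leq}(x,z)$ the three elements are pairwise distinct and, up to swapping $x$ and $z$ (which leaves the hypothesis and the pair of conditions unchanged), we may assume $x<y<z$. As $\preceq'$ is a linear ordering, $y\not\prec' x$ and $y\not\prec' z$ give $x\prec' y$ and $z\prec' y$. Because $x<y$, unravelling the definition of $\prec'$ shows $x\prec' y$ is equivalent to $x\precsim y$; because $z>y$, the tie-breaking clause cannot apply, so $z\prec' y$ is equivalent to $z\prec y$. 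Now I apply weak single-peakedness of $\precsim$ to the triple $(x,y,z)$: it yields $y\prec x$, or $y\prec z$, or $x\sim y\sim z$. The first is impossible since $x\precsim y$, the second is impossible since $z\prec y$, hence $x\sim y\sim z$; but $z\prec y$ contradicts $z\sim y$. This contradiction proves that $\preceq'$ is single-peaked w.r.t.\ $\leq$, completing the proof.

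There is no real obstacle here; the only thing to be careful about is the bookkeeping in the last paragraph — in particular keeping track of which of the two inequalities $x<y$, $y<z$ makes the tie-breaking clause of $\prec'$ available versus inert — together with the symmetry reduction that disposes of the case $z<y<x$. The conceptual content worth isolating is just that refining $\precsim$ by $\leq$ can never create a ``valley'': the configuration that would witness one (an element $y$ strictly $\leq$-between $x$ and $z$ with $x\precsim y$ and $z\prec y$) is already excluded by weak single-peakedness of $\precsim$ unless $x\sim y\sim z$, and in that remaining case the tie-breaking still places $y$ strictly below one of $x,z$.
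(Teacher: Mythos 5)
Your proof is correct: the lexicographic refinement of $\precsim$ by $\leq$ is the natural choice of subordinated linear ordering, it is clearly subordinated, and your three-way case analysis (using weak single-peakedness of $\precsim$ together with the observation that the tie-breaking clause is available for $x\prec' y$ but inert for $z\prec' y$ when $x<y<z$) correctly rules out any violation of single-peakedness. The paper states this result as a Fact without proof, so there is nothing to compare against, but your construction is evidently the intended one and works for arbitrary $X$, not just finite sets.
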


\begin{remark}
The converse of Fact \ref{fact:qwsp} does not hold in general. Indeed, if we consider the weak ordering $\precsim$ on $X_3$ defined by $1\prec 2\sim 3$, then it is clearly not weakly single-peaked w.r.t.\ $\leq_3$ but it has a subordinated linear ordering $\preceq'$ that is single-peaked w.r.t.\ $\leq_3$, namely $1 \prec' 2 \prec' 3$.
\end{remark}

\begin{lemma}
Let $\leq$ be a linear ordering on $X$ and $\precsim$ be a quasilinear weak ordering on $X$. Then $\precsim$ is weakly single-peaked w.r.t.\ $\leq$ iff there exists a linear ordering $\preceq'$ on $X$ subordinated to $\precsim$ that is single-peaked w.r.t.\ $\leq$.
\end{lemma}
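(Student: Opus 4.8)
The forward implication is exactly Fact~\ref{fact:qwsp} (which uses only weak single-peakedness, not quasilinearity), so the plan is to prove only the converse. Assume therefore that $\precsim$ is quasilinear and that some linear ordering $\preceq'$ on $X$, subordinated to $\precsim$, is single-peaked w.r.t.\ $\leq$; I would verify directly that $\precsim$ satisfies the defining condition of weak single-peakedness from Definition~\ref{def:WBlack}.

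Concretely, I would fix $x,y,z\in X$ with $y\in\mathrm{conv}_{\leq}(x,z)$, noting that $x,y,z$ are then pairwise distinct, and aim to show $y\prec x$ or $y\prec z$ or $x\sim y\sim z$. Since $\preceq'$ is a \emph{linear} ordering that is single-peaked w.r.t.\ $\leq$, the triple $(x,y,z)$ forces $y\prec' x$ or $y\prec' z$; by the symmetry in $x$ and $z$ I may assume $y\prec' x$. The crucial step is to transfer this back to $\precsim$: because $\preceq'$ is subordinated to $\precsim$, the relation $x<y$ would entail $x\prec' y$, contradicting antisymmetry of the linear ordering $\preceq'$; hence $x<y$ is impossible, and totality of $\precsim$ leaves only $y\prec x$ or $y\sim x$.

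If $y\prec x$, the condition of Definition~\ref{def:WBlack} is already met at $(x,y,z)$. If instead $y\sim x$ with $x\neq y$, then quasilinearity of $\precsim$ places $x,y\in\min_{\precsim}X$; in particular $y\precsim z$, i.e.\ $y\prec z$ or $y\sim z$. The case $y\prec z$ again gives the conclusion, and in the case $y\sim z$ transitivity of $\sim$ yields $x\sim y\sim z$, the third admissible alternative. As $(x,y,z)$ was arbitrary, $\precsim$ is weakly single-peaked w.r.t.\ $\leq$.

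I do not expect a genuine obstacle here: the argument is a short case split, and the only point requiring a little care is using the subordination hypothesis in the right direction (deducing $y\prec x$ or $y\sim x$ from $y\prec' x$), after which quasilinearity is exactly what is needed to handle the ``equivalent'' case. Everything else is routine manipulation of the definitions of weak ordering, single-peakedness, and quasilinearity.
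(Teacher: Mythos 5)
Your proof is correct and follows essentially the same route as the paper: necessity via Fact~\ref{fact:qwsp}, and sufficiency by combining single-peakedness of the subordinated linear ordering (to force $y\prec' x$ or $y\prec' z$), the subordination hypothesis to transfer this back to $\precsim$, and quasilinearity to dispose of the tied case. The only difference is presentational — you argue directly where the paper argues by contradiction — and both case analyses have the same logical content.
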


\begin{proof}
(Necessity) This follows from Fact \ref{fact:qwsp}.

(Sufficiency) We proceed by contradiction. Suppose that there exist $a,b,c\in X$, with $b \in \mathrm{conv}_{\leq}(a,c)$, such that $b \succ a$ and $b \succsim c$ (the case $b \succsim a$ and $b \succ c$ is similar). If $b \succ a$ and $b \succ c$, then we have $b \succ' a$ and $b \succ' c$, which contradicts single-peakedness of $\preceq'$ w.r.t. $\leq$. Otherwise, if $a \prec b \sim c$, then quasilinearity of $\precsim$ is violated.
\end{proof}
\section{Conclusion}

This paper is based on two known results : (1) a characterization of the class of associative and quasitrivial operations on $X$ (see Theorem \ref{thm:Kim}) and (2) the fact that the class of bisymmetric and quasitrivial operations on $X$ is a subclass of the latter one (see Lemma \ref{lemma:bis}(iii)). By introducing the concept of quasilinearity for weak orderings on $X$ (see Definition \ref{def:quasi}) we provided a characterization of the class of bisymmetric and quasitrivial operations on $X$ (see Theorem \ref{thm:B}). To characterize those operations that are order-preserving (see Theorem \ref{thm:BND}), we considered the concepts of weak single-peakedness (see Definition \ref{def:WBlack}) and quasilinearity, and provided a graphical characterization of the conjunction of these two concepts (see Theorem \ref{thm:graph2}). Surprisingly, when $X=X_n$, we also provided a characterization of the latter classes (see Theorems \ref{thm:B} and \ref{thm:BND}) in terms of the degree sequences, which provides an easy test to check whether a quasitrivial operation is bisymmetric. The latter characterizations provide an answer to an open question posed in \cite[Section 5, Question (b)]{CouDevMar}. Also, when $X=X_n$, we enumerated the class of bisymmetric and quasitrivial operations as well as the subclass of bisymmetric, quasitrivial, and order-preserving operations (see Propositions \ref{prop:qms} and \ref{prop:count}). Furthermore, we enumerated all the quasilinear weak orderings on $X_n$ as well as those that are weakly single-peaked w.r.t. the linear ordering $\leq_n$ (see Propositions \ref{prop:uw1} and \ref{prop:uw2}) . All the new sequences that arose from our results were posted in OEIS.

In view of these results, some questions arise and we list some of them below.
\begin{itemize}
\item We provided a partial answer to an open question posed in \cite[Section 6]{CouDevMar2}. Namely, we were able to generalize \cite[Theorems 3.3 and 3.7]{CouDevMar2} by relaxing commutativity into bisymmetry (see Theorems \ref{thm:B} and \ref{thm:BND}). It would be interesting to search for a generalization of Theorems \ref{thm:B} and \ref{thm:BND} by removing bisymmetry in assertion (i).
\item Generalize Theorems \ref{thm:B} and \ref{thm:BND} by relaxing quasitriviality into idempotency.
\item Generalize Theorems \ref{thm:B} and \ref{thm:BND} for $n$-variable operations, with $n \geq 3$.
\item The integer sequences A000142, A002627, A000225, and A131577 were previously known in OEIS to solve enumeration issues neither related to quasilinearity nor weak single-peakedness. It would be interesting to search for possible one-to-one correspondences between those problems and ours.
\end{itemize}

\section*{Acknowledgments}

The author would like to thank Miguel Couceiro, Jean-Luc Marichal, and Bruno Teheux for fruitful discussions and valuable remarks. This research is supported by the Internal Research Project R-AGR-0500 of the University of Luxembourg and by the Luxembourg National Research Fund R-AGR-3080.


\end{document}